\newcommand{\RR}{\mathbb R}
\newcommand{\ZZ}{\mathbb Z}
\newcommand{\QQ}{\mathbb Q}
\newcommand{\CC}{\mathbb C}
\newcommand{\NN}{\mathbb N}
\newcommand{\cpt}{\mathbb K}
\newcommand{\dlim}{{\varinjlim}}
\newcommand{\ilim}{{\varprojlim}}
\def\T{\mathcal T}
\def\loc{\textup{loc}}
\def\cpc{\textup{cpc}}
\def\deg{\textup{deg}}
\def\ch{\textup{ch}}
\def\C{\textup{C}}
\def\biv{\textup{biv}}
\def\KKcat{\mathtt{KK_{C^*}}}
\def\TAlg{\mathrm{T^*Alg}}
\def\ev{\textup{ev}}
\def\RK{\textup{RK}}
\def\id{\mathrm{id}}
\def\tr{\textup{tr}}
\def\hT{\hat{T}}
\def\X{\mathcal X}
\def\Hom{\textup{Hom}}
\def\ker{\textup{ker}}
\def\HL{\textup{HL}}
\def\op{\textup{op}}
\def\hJ{\hat{J}}
\def\Ban{\mathtt{Ban}}
\def\brot{\hat{\otimes}_b}
\def\K{\textup{K}}
\def\fC{\mathfrak C}
\def\skk{\text{$\sigma$-$\textup{kk}$}}
\def\alg{\textup{alg}}
\def\Ho{\textup{Ho}}
\def\HP{\textup{HP}}
\def\indBan{\overset{\longrightarrow}{\mathtt{Ban}}}
\def\cS{\mathcal{S}}
\def\diss{\textup{diss}}
\def\prot{\hat{\otimes}}
\def\colim{\textup{colim}}
\def\cJ{\mathcal J}
\def\CAlg{\mathtt{Alg_{C^*}}}
\def\KK{\textup{KK}}
\def\H{\textup{H}}
\def\pct{\textup{Cpt}}
\def\cC{\mathcal C}
\def\1{\bf{1}}
\def\sC{\text{$\sigma$-$C^*$}}
\def\an{\textup{an}}
\def\HH{\textup{HH}}
\def\knew{\mathfrak{(new)}}
\def\Csep{\mathtt{Sep_{C^*}}}
\def\grot{\hat{\otimes}_\pi}
\def\cU{\mathcal U}
\def\CT{\textup{CT}}
\def\cT{\mathcal T}
\def\sep{\textup{sep}}
\def\cL{\mathcal L}
\def\BKK{\textup{BKK}}
\def\fT{\mathfrak T}
\def\HLcat{\mathtt{HL_{\sC}}}
\def\kh{\mathtt{(kh)}}
\def\ks{\mathtt{(ks)}}
\def\kk{\textup{kk}}
\def\ke{\mathtt{(ke)}}
\def\kt{\mathtt{(kt)}}
\def\od{\textup{od}}
\def\cK{\mathcal K}
\def\A{\mathcal{A}}
\def\sCalg{\mathtt{Sep_{\sC}}}
\def\cB{\mathcal{B}}
\newcommand{\map}{\rightarrow}
\newcommand{\functor}{\longrightarrow}
\def\cT{\mathcal T}
\newcommand{\beq}{\begin{eqnarray}}
\newcommand{\beqn}{\begin{eqnarray*}}
\newcommand{\eeq}{\end{eqnarray}}
\newcommand{\eeqn}{\end{eqnarray*}}
\newtheorem{thm}{Theorem}
\newtheorem{lem}[thm]{Lemma}
\newtheorem{prop}[thm]{Proposition}
\newtheorem{cor}[thm]{Corollary}
\newtheorem{ex}[thm]{Example}
\newtheorem{defn}[thm]{Definition}
\newtheorem{rem}[thm]{Remark}
\begin{document}

\title[Twisted $\K$-theory]{Twisted $\K$-theory, $\K$-homology and bivariant Chern--Connes type character of some infinite dimensional spaces}
\author{Snigdhayan Mahanta}
\email{s.mahanta@uni-muenster.de}
\address{Mathematical Institute,
University of Muenster,
Einsteinstrasse 62,
48149 Muenster, Germany}
\subjclass[2010]{19D55, 19K35, 46L85, 46L87}
\thanks{This research was supported by Australian Research Council's Discovery Projects funding scheme (project number DP0878184), ERC through AdG 267079 and the Deutsche Forschungsgemeinschaft (SFB 878).}

\maketitle

\begin{abstract}
We study the twisted $\K$-theory and $\K$-homology of some infinite dimensional spaces, like $SU(\infty)$, in the bivariant setting. Using a general procedure due to Cuntz we construct a bivariant $\K$-theory on the category of separable $\sC$-algebras that generalizes both twisted $\K$-theory and $\K$-homology of (locally) compact spaces. We construct a bivariant Chern--Connes type character taking values in bivariant local cyclic homology. We analyse the structure of the dual Chern--Connes character from (analytic) $\K$-homology to local cyclic cohomology under some reasonable hypotheses. We also investigate the twisted periodic cyclic homology via locally convex algebras and the local cyclic homology via $C^*$-algebras (in the compact case). 
\end{abstract}

\begin{center}
{\bf Introduction}
\end{center}

Twisted $\K$-theory and cohomology theories have attracted a lot of attention lately due to their importance in string theory. The notion of twisted $\K$-theory can be traced back to a paper by Donovan--Karoubi \cite{DonKar} (see \cite{KarTwist} for a recent survey). The {\em geometric twists} of $\K$-theory are parametrized by the third integral cohomology group. The concept of twisted $\K$-theory was limited to twists with respect to a torsion class until Rosenberg showed how to deal with arbitrary (possibly non-torsion) classes using continuous trace $C^*$-algebras \cite{RosCT} (see also \cite{ASTwistedK1,ASTwistedK2}). One advantage of the operator theoretic viewpoint of twisted $\K$-theory is that it enables us to use all the tools from noncommutative geometry \cite{ConBook}, although there are various other elegant approaches, viz, homotopy theory viewpoint \cite{ABG,FHT}, bundle gerbe viewpoint \cite{BCMMS}, groupoid and stack viewpoint \cite{TuXuLG,DanTDuality}, and so on. Twisted $\K$-homology is an equally important theory (see, for instance, \cite{EvaGan,MeiTwist}) and the pairing between the two theories has applications to topological $\textup{T}$-duality \cite{BMRS2,KQ}.

The aim of this article is to provide a formalism for both twisted $\K$-theory and $\K$-homology as a bivariant theory with composition product on a certain category of noncommutative spaces that are not necessarily compact (or even locally compact). The appropriate function algebra for the noncommutative analogue of a compactly generated and completely Hausdorff space is a pro $C^*$-algebra \cite{NCP1,NCP2}. It turns out that there is a reasonable bivariant $\K$-theory on the category of pro $C^*$-algebras due to Weidner \cite{Weidner}. Unfortunately Weidner's bivariant $\K$-theory lacks the desirable universal characterisation that enables us to construct interesting bivariant natural transformations into other theories very easily. Therefore, we adopt a different strategy suggested by Cuntz \cite{CunGenBivK} to construct a bivariant $\K$-theory, denoted by $\skk$-theory, on the category of separable $\sC$-algebras, which is a full subcategory of pro $C^*$-algebras.  Using some results of Bonkat \cite{
Bonkat} we eventually show that our $\skk$-theory agrees with Weidner's bivariant $\K$-theory on the subcategory of separable and nuclear $\sC$-algebras. It remains an open question whether they agree on all separable $\sC$-algebras. Using the universal characterisation of $\skk$-theory we also construct a bivariant Chern--Connes type character on the category of separable $\sC$-algebras taking values in bivariant local cyclic homology \cite{Puschnigg,PuschniggHL}. In \cite{MatSte} the authors constructed a twisted Chern--Connes character from the twisted $\K$-theory to the twisted periodic cyclic homology using techniques from noncommutative geometry. They further showed that this map agrees with the original geometric construction of the twisted Chern character and becomes an isomorphism after tensoring with $\CC$. Their theory was restricted to compact spaces and it was subsequently extended to orbifolds in \cite{TuXuChern}. Our bivariant Chern--Connes type character generalizes the twisted Chern--Connes 
character 
of \cite{MatSte} but in general it does not become an isomorphism after tensoring with $\CC$. This phenomenon can be explained by the infinite dimensionality of the underlying spaces on which the theory is being considered. We mostly consider those infinite dimensional spaces, whose underlying topological spaces are paracompact, Hausdorff and countably compactly generated, i.e., countable direct limits of compact spaces. Let us comment further on the level of generality of the infinite dimensional spaces that can be studied via $\sC$-algebras.

In many interesting applications of twisted $\K$-theory to physics and topology one encounters large spaces, e.g., $U(\infty)$, $SU(\infty)$, $\CC P^\infty$, etc. The Bott unitary group $U(\infty)$ is a classifying space for topological $\K^1$-theory and hence an interesting object in topology. In the Wess--Zumino--Witten models one typically takes a Lie group, like $SU(n)$, as the target space. It is plausible that allowing the target space to be an infinite dimensional Lie group, like $SU(\infty)$, is also interesting. Such objects appear in the `large $N$ limit' discussions in string theory. Specifically, Gopakumar--Vafa advocate the study of $SU(\infty)$ gauge theories in \cite{GopaVafa} and (infinite) matrix models were shown to be relevant to $M$-theory \cite{BFSS}. The category of separable $\sC$-algebras is sufficient to treat all the above-mentioned examples, i.e., $$U(\infty) = \dlim_n U(n), \hspace{2mm} SU(\infty)=\dlim_n SU(n), \hspace{2mm} \CC P^\infty=\dlim_n\CC P(n).$$ Furthermore, for a compact Hausdorff 
topological group $G$, one may choose a countably compactly generated and Hausdorff model for $EG$. We concentrate on the example of $SU(\infty)$ throughout, which is an infinite dimensional Lie group, and known as the {\em universal gauge group} in the physics literature \cite{HarMoo,CarMic}. This group was quantized in the setting of $\sC$-quantum groups and its (representable) $\K$-theory was computed in \cite{Qhom}. Another important class of infinite dimensional spaces is that of loop groups \cite{PreSeg} (see also \cite{HekMic}). Although such objects cannot be studied via $\sC$-algebras, we argue in the first few sections of the paper (see Section \ref{locConK} and Section \ref{HPT}) that there is a completely satisfactory theory using locally convex algebras \cite{CunWeyl} if one is merely interested in twisted $\K$-theory and periodic cyclic homology computations (and not their full bivariant generalizations). In a subsequent paper we are going to describe another bivariant theory that is capable of 
handling all such infinite dimensional spaces.

We demonstrate within the text that twisted $\K$-theory and $\K$-homology are fairly computable theories. Connes originally constructed cyclic cohomology (and its periodic version) as a receptacle for the $\K$-homological Chern--Connes character \cite{ConDiff}. It is anticipated that the bivariant Chern--Connes type character would produce yet another computational aid modulo torsion. Thanks to the extensive work on the homology theory of {\em topological algebras} (see, for instance, \cite{Helemskii,MeyCycHom}) there are now several computational strategies available for cyclic homology theory (and its various derivatives). The article is organized as follows:

In Section \ref{locConK} we investigate bivariant $\K$-theory in the context of locally convex algebras. If $X$ is a countably compactly generated and Hausdorff topological space then $\C(X)$ is $\sC$-algebra. From such a space and a twisting datum in terms of a principal projective unitary bundle on it we show how to construct a $\sC$-algebra. Such algebras and their representable $\K$-theory should provide the natural extension of twisted $\K$-theory to countably compactly generated and Hausdorff spaces. However, with some foresight we subsume the discussion in the context of bivariant  $\K$-theory for locally convex algebras developed by Cuntz \cite{CunWeyl}. The reason is that we would also like to investigate the twisted periodic cyclic homology of such spaces and the associated Chern--Connes character map. Using Karoubi density type results one can extract a smooth dense locally convex subalgebra of the aforementioned $\sC$-algebra with the same $\K$-theory. This bivariant generalization will also be 
useful for future applications. We show that for nontrivial twists the twisted $\K$-theory groups of $SU(\infty)$ are trivial (Example \ref{SUK}); however, it is easy to see that there are examples of spaces for which the twisted $\K$-theory groups are nontrivial (see, for instance, Example \ref{NTriv}). Let us emphasize that the discussions in this section are all applicable to very general spaces like quasitopological spaces in the sense of Spanier \cite{Spanier}.

In Section \ref{HPT} we study the twisted periodic cyclic homology theory via locally convex algebras. In this context one also needs to keep track of a smooth structure. Therefore, we show how to construct {\em smooth subalgebras} of $\sC$-algebras (whenever it makes sense). We explain the general construction of the periodic cyclic homology groups of locally convex algebras using the $\X$-complex formalism. Although, we mostly look at the periodic cyclic homology groups, it is presumably a better idea to directly work with the $\X$-complex. Once again we point out that the discussions in this section are all applicable to quasitopological spaces.

In Section \ref{HPCC} we first discuss the periodic cyclic homology valued Chern--Connes character and give an example where the map does not become an isomorphism after tensoring with the complex numbers. As argued before, this is one of the first instances where the infinite dimensional setting differs from the finite dimensional compact setting. We also study the local cyclic homology valued Chern--Connes character in the compact case. In order to do so we prove a Karoubi density type result in local cyclic (co)homology (see Theorem \ref{HLcpt}). We also show that the local cyclic homology valued Chern--Connes character becomes an isomorphism after tensoring with the complex numbers (in the compact case). Hence it can act as a replacement for the periodic cyclic homology as a target for the $\K$-theoretic Chern--Connes character.

The author is not aware of any Karoubi density type result in analytic $\K$-homology. In fact, the bivariant $\K$-theory discussed in Section \ref{locConK} does not produce the correct $\K$-homology, i.e., it does not agree with Kasparov's analytic $\K$-homology on the category of separable $C^*$-algebras. Therefore, in Section \ref{KHom} we use a different bivariant $\K$-theory, denoted by $\skk$-theory (see Definition \ref{newKK}), on the category of separable $\sC$-algebras \cite{CunGenBivK}). This theory agrees with Kasparov's bivariant $\K$-theory, when restricted to the subcategory of separable $C^*$-algebras. We also show that it agrees with the bivariant $\K$-theory developed by Weidner \cite{Weidner} {\em on the subcategory of nuclear and separable $\sC$-algebras}. Using the established properties of Weidner's bivariant $\K$-theory, we prove some useful results about the $\K$-theory and $\K$-homology of nuclear and separable $\sC$-algebras (see Corollary \ref{Nuc}). The $\sC$-algebras that arise as 
twisted noncommutative spaces as described above fall in this category.

For the $\K$-homological Chern character in commutative geometry we refer the readers to \cite{BauDou1}. Due to the lack of Karoubi density in $\K$-homology, the natural codomain for the dual Chern--Connes character from $\K$-homology of separable $\sC$-algebras is local cyclic cohomology \cite{Puschnigg,PuschniggHL}. In Section \ref{HL} we explain how the theory works for separable $\sC$-algebras. In order to do so we describe a functorial construction of an ind-Banach algebra from a $\sC$-algebra. There is an elegant treatment of local cyclic (co)homology on the category of ind-Banach algebras due to Meyer \cite{MeyCycHom}. The passage from $\sC$-algebras to ind-Banach algebras goes through the category of bornological algebras. Therefore, we include a brief discussion of bornological algebras. If one wants to use the $\X$-complex formalism to define local cyclic (co)homology, which is what we do here, then the bornological machinery is indispensable; indeed, there is a bornological completion of the 
algebraic $\X$-complex that produces the desired result.

In Section \ref{HLCC} we construct a natural bivariant Chern--Connes type character, which is the main mathematical result of this paper, and then specialize to the dual local cyclic cohomology valued Chern--Connes character from $\K$-homology.

\begin{thm} (see Theorem \ref{BivCC} below)
There is a natural multiplicative bivariant Chern--Connes type character $\ch^\biv_*:\skk_*(A,B)\cong\skk_*(\cpt\prot A,\cpt\prot B)\map\HL_*(\cpt\prot A,\cpt\prot B)$ on the category of separable $\sC$-algebras.
\end{thm} This multiplicative bivariant Chern--Connes type character is much more powerful as a tool than the disembodied univariant $\K$-theoretic Chern--Connes character and the $\K$-homological dual Chern--Connes character. On the category of {\em locally multiplicative bornological algebras} an $\HL_*$-valued bivariant Chern--Connes character was constructed in \cite{Voigt}. However, the dissection functor on the category of $\sC$-algebras does not land inside the category of locally multiplicative bornological algebras, whence the results of ibid. are not applicable to $\sC$-algebras. Under some mild hypotheses, we exhibit a factorization of the dual Chern--Connes character (see Theorem \ref{dualCC}). We end by discussing a general class of examples (invoking Poincar{\'e} duality type isomorphisms), where some simplifications occur.

\vspace{3mm}
\noindent
{\bf Conventions:} Throughout this article we work over a subcategory of $\CC$-algebras with $\CC$-algebra homomorphisms (typically locally convex algebras or $\sC$-algebras), although many discussions make sense much more generally. Unless otherwise stated, all spaces are assumed to be (completely) Hausdorff and topological or bornological vector spaces are assumed to be complete. Adding a separability condition on a commutative $C^*$-algebra amounts to imposing a metrizability condition on the corresponding space.

\vspace{3mm}
\noindent{\bf Terminology:}
The periodic cyclic homology valued $\K$-theoretic Chern--Connes character is also known as the Connes--Karoubi character in the literature due to Karoubi's contribution to this topic \cite{Kar}. In order to avoid confusion, we stick to the original terminology in the bivariant situation developed by Cuntz.

\vspace{3mm}
\noindent{\bf Acknowledgements:}
The author is extremely grateful to J. Cuntz and R. Meyer for some invaluable email correspondences regarding bivariant $\K$-theory and local cyclic (co)homology respectively. The author is indebted to N. C. Phillips for patiently explaining several technical issues about pro $C^*$-algebras. The author also wishes to thank R. Gopakumar, P. Hekmati and V. Mathai for some discussions regarding twisted $\K$-theory. The author would also like to thank M. Khalkhali, M. Marcolli and J. Mickelsson for their helpful comments. Finally the author expresses his gratitude towards the anonymous referee(s) for pointing out a gap in an earlier version of the paper and suggesting several improvements.

\section{Twisted $\K$-theory via locally convex algebras} \label{locConK}

\subsection{Preliminaries on $\sC$-algebras}
Let $\TAlg$ denote the category of noncommutative topological $*$-algebras with continuous $*$-homomorphisms \cite{Mallios}. Now we recall some basic facts about inverse limit $C^*$-algebras or pro $C^*$-algebas following Phillips \cite{NCP1,NCP2}. A {\em pro $C^*$-algebra} is an object of $\TAlg$, which is complete Hausdorff and whose topology is determined by it continuous $C^*$-seminorms, i.e., a net $\{a_\lambda\}$ converges to $0$ if and only if $p(a_\lambda)$ converges to $0$ in $\RR$ for every $C^*$-seminorm $p$ on $A$. Clearly the category of $C^*$-algebras, denoted $\CAlg$, with $*$-homomorphisms (automatically continuous) is a full subcategory of $\TAlg$. Let $I$ be a small filtered index category. A contravariant functor $I^\op\functor\CAlg$ ($i\mapsto A_i$) produces an inverse system of $C^*$-algebras with $*$-homomorphisms. The inverse limit of this system $\ilim_{i\in I}  A_i$ inside the category $\TAlg$ is a pro $C^*$-algebra. In fact, an object $A\in\TAlg$ is a pro $C^*$-algebra if and only if it arises as a limit of an inverse system of $C^*$-algebras as described above.

Let $A$ be a pro $C^*$-algebra and let $S(A)$ denote the set of all continuous $C^*$-seminorms on $A$. For any $p\in S(A)$ set $\ker(p)= \{a\in A | p(a)=0\}$. Then $\ker(p)$ is a two-sided closed $*$-ideal in $A$ and $A_p= A/\ker(p)$ is actually a $C^*$-algebra. The set $S(A)$ is directed by declaring $p\leqslant q$ if and only if $p(a)\leqslant q(a)$ for all $a\in A$. This converts $\{p\mapsto A_p\}$ into an inverse system and, in fact, $A\cong \ilim_{p\in S(A)} A_p$. Clearly, the limit does not change by passing onto a cofinal subset of $S(A)$. A pro $C^*$-algebra $A$ is called a {\em $\sC$-algebra} if there is a countable (cofinal) subset $S'\subset S(A)$, which determines that topology of $A$; in other words, $A\cong \ilim_{p\in {S'}} A_p$. A space $X=\dlim_{n\in\NN} X_n$ is a {\em countably compactly generated} space if each $X_n$ is compact and Hausdorff, i.e., $X$ is a countable direct limit of compact Hausdorff spaces in the category of topological spaces and continuous maps. The direct limit topology on $X$ is automatically Hausdorff. The category of commutative and unital $\sC$-algebras is (contravariantly) equivalent to the category of countably compactly generated and Hausdorff spaces via the functor $X\mapsto \C(X)$. It is also known that commutative and unital pro $C^*$-algebras model all {\em quasitopological} and {\em completely Hausdorff} spaces. However, the technical aspects of pro $C^*$-algebras are rather cumbersome. For instance, a generic algebraic $*$-homomorphism need not be continuous; even if it is continuous it need not 
have closed range. Fortunately, it is known that any $*$-homomorphism between two $\sC$-algebras is automatically continuous.

Given any $\sC$-algebra $A$ one can find a confinal subset $S'\subset S(A)$, such that $S'\simeq \NN$ (as a linearly directed set). Therefore, one can explicitly write $A$ as a countable inverse limit of $C^*$-algebas, $A\cong \ilim_{n\in\NN} A_n$. Furthermore, the connecting $*$-homomorphisms of the inverse system $\{A_n\map A_{n-1}\}_{n\in\NN}$ can be arranged to be surjective without altering the inverse limit $\sC$-algebra $A\cong \ilim_n A_n$. This is done by replacing each $A_n$ by the intersection of all the images of the homomorphisms $A_m\map A_n$, $m\geqslant n$. In the sequel, we shall freely use this explicit presentation of a $\sC$-algebra as an inverse limit of a countable inverse system of $C^*$-algebras with surjective $*$-homomorphisms.

\begin{rem}
The category $\CAlg$ also contains all small inverse (filtered) limits. In general, a pro $C^*$-algebra is not a $C^*$-algebra, since the limit is taken in the larger category $\TAlg$. For instance, let $X=\colim_{i\in\NN} X_i$ be a countably compactly generated space with each $X_i$ compact. Here $X$ is endowed with the direct limit topology, which is automatically Hausdorff. Then $\{\C(X_i)\}$ forms naturally a countable inverse system of $C^*$-algebras with restriction $*$-homomorphisms. The limit of this system in $\TAlg$ is $\C(X)$, whereas the limit inside $\CAlg$ is $\C_b(X)$, the unital $C^*$-algebra of norm bounded functions on $X$.
\end{rem}

\subsection{$\K$-theory of locally convex algebras} \label{CKTh}

Phillips defined a $\K$-theory for $\sC$-algebras \cite{PhiRepK}, which is called representable $\K$-theory and denoted by $\RK$. The $\RK$-theory agrees with the usual $\K$-theory of $C^*$-algebras if the input is a $C^*$-algebra and many of the nice properties that $\K$-theory satisfies generalise to $\RK$-theory. We are going to work with a more general $\K$-theory (for locally convex algebras) than $\RK$-theory, which agrees with the latter on the category of $\sC$-algebras. Nevertheless, let us briefly recall from ibid. some of the basic facts about $\sC$-algebras and $\RK$-theory. If $A=\ilim_n A_n$ is a $\sC$-algebra then the {\em stabilization} $A\prot\cpt$ is defined to be $\ilim_n A_n\prot \cpt$, where $\cpt$ denotes the $C^*$-algebra of compact operators and $\prot$ denotes the maximal $C^*$-tensor product.

\begin{enumerate} \label{RK}

\item For each $i\in\NN$, $\RK_i$ is a homotopy invariant abelian group valued functor on the category of $\sC$-algebras.

\item Bott periodicity holds, so that $\RK_i(A)\cong\RK_{i+2}(A)$.

\item \label{agreement} If $A$ is a $C^*$-algebra then there is a natural isomorphism $\RK_i(A)\cong\K_i(A)$.

\item There is a natural isomorphism $\RK_i(A)\cong \RK_i(A\prot\cpt)$, induced by the corner embedding $A\map A\prot\cpt$.

\item Countable products are preserved, i.e., there is a natural isomorphism $$\RK_i(\prod_n A_n)\cong\prod_n\RK_i(A_n).$$\label{product}

\item If $\{A_n\}_{n\in\NN}$ is a countable inverse system of $\sC$-algebras with surjective homomorphisms (which can always be arranged), then the inverse limit exists as a $\sC$-algebra and there is a Milnor $\ilim^1$-sequence

\beqn
0\map {\ilim}^1_n \RK_{1-i}(A_n)\map \RK_i(\ilim_n  A_n)\map \ilim_n \RK_i(A_n)\map 0.
\eeqn

\item If $0\map A\map B\map C\map 0$ is an exact sequence of $\sC$-algebras then there is a $6$-term exact sequence

\beqn
\xymatrix{
\RK_0(A)\ar[r] &\RK_0(B)\ar[r] & \RK_0(C)\ar[d]\\
\RK_1(C)\ar[u] & \ar[l] \RK_1(B) & \ar[l]\RK_1(A).
}
\eeqn
\end{enumerate}

As stated earlier, for our purposes we need a more general $\K$-theory developed by Cuntz \cite{CunBivCh,CunWeyl}, which is defined on the larger category of locally convex algebras with continuous homomorphisms. The category of locally convex algebras is general enough to subsume almost all cases of interest in noncommutative geometry (in the operator algebraic framework). In particular, it contains $\sC$-algebras and the $\K$-theory that we are going to describe is a generalization of the representable $\K$-theory of $\sC$-algebras. All the necessary details of the contents of this section can be found in ibid..

Let $\grot$ the completed projective tensor product between two complete locally convex spaces \cite{GroPro}. By a {\em locally convex algebra} we mean a $\CC$-algebra, whose underlying linear space is a complete locally convex space, such that the algebra multiplication is jointly continuous, i.e., it extends to a continuous linear map $A\grot A\map A$. These are examples of pro-algebra objects in the symmetric monoidal additive category of Banach spaces (monoidal structure given by $\grot$). Let $\CC[0,1]$ denote the algebra of $\CC$-valued $\C^\infty$-functions on $[0,1]$ all whose derivatives vanish at the endpoints $\{0,1\}$. Let $\CC(0,1)$ be the subalgebra of $\CC[0,1]$, which vanish on $\{0,1\}$. For any locally convex algebra $A$, set $A[0,1]^k=\A\grot\CC[0,1]^{\grot k}$ and $A(0,1)^k= A\grot\CC(0,1)^{\grot k}$. The crucial property satisfied by $\K$-theory for locally convex algebras is {\em diffotopy invariance}. Two continuous homomorphisms $\alpha_1,\alpha_2: A\map B$ between locally convex 
algebras are said to be {\em diffotopic} if there is a continuous homomorphism $\phi: A\map B[0,1]$, such that $\alpha_1 =\ev_0\phi$ and $\alpha_2=\ev_1\phi$, i.e., the following diagram consisting of continuous homomorphisms commutes

\beqn
 \xymatrix{
 & B \\
A
\ar[ur]^{\alpha_1}
\ar[r]^{\phi}
\ar[dr]_{\alpha_2}
& B[0,1]
\ar[u]_{\ev_0}
\ar[d]^{\ev_1} \\
& B, } \\
\eeqn where it should be noted that $B[0,1]$ consists of smooth $B$-valued functions on $[0,1]$ all whose derivatives vanish at the endpoints, whence the name diffotopy (as opposed to homotopy).

Let $V$ be a complete locally convex space. Let $T^\alg V= \oplus_{n=1}^\infty V^{\otimes n}$ denote the free algebraic (nonunital) tensor algebra over $V$. There is a canonical linear map $\sigma: V\map T^\alg V$, sending $V$ to the first summand. Equip $T^\alg V$ with the locally convex topology given by the family of all seminorms of the form $p\circ\phi$, where $\phi$ is any homomorphism $T^\alg V\map B$ ($B$ any locally convex algebra and $p$ a continuous seminorm on $B$), such that $\phi\circ\sigma: V\map B$ is a continuous linear map. For any locally convex algebra $A$, let us denote the completed locally convex tensor algebra by $TA$. Using the construction $A\mapsto TA$ one can define a $\K$-theory, but it is not clear whether this theory agrees with the representable $\K$-theory of $\sC$-algebras. To this end, one needs the $m$-algebra modification.

Following \cite{CunBivCh} let us call a locally convex algebra, which is an inverse limit of Banach algebras, an {\em $m$-algebra}. There is a different free (nonunital) tensor algebra construction, which is suitable in the category of $m$-algebras. Denote by $\hT V$ the completion of $T^\alg V$ with respect to the family $\{\hat{p}\,|\, \text{$p$ continuous seminorm on $V$}\}$ of submultiplicative seminorms, where $\hat{p}=\oplus_{n=1}^\infty p^{\otimes n}$. For any $m$-algebra $A$, the tensor algebra $\hT A$ is also an $m$-algebra, and the canonical homomorphism $\pi:T^\alg A\map A$ sending $a_1\otimes\cdots\otimes a_n\mapsto a_1\cdots a_n$ extends to a continuous homomorphism $\pi: \hT A\map A$, which is evidently surjective. Let $\hJ A$ denote the kernel of $\pi$, so that $$0\map \hJ A\map \hT A\overset{\pi}{\map} A\map 0$$ is an extension diagram of $m$-algebras. The constructions $A\mapsto \hJ A$ and $A\mapsto \hT A$ are both functorial with respect to continuous homomorphisms between $m$-algebras.

One needs to extend the construction of $A\mapsto \hT A$ to arbitrary locally convex algebras. For an arbitrary locally convex algebra $A$, the free (nonunital) tensor algebra $\hT A$ is uniquely (up to an isomorphism) characterised by the following properties:

\begin{itemize}
\item If $A$ is an $m$-algebra, then $\hT A$ is the free tensor $m$-algebra described above.
\item The construction $A\mapsto \hT A$ is functorial with respect to continuous homomorphisms.
\item For every $m$-algebra $C$, the natural map $T(A\otimes C)\map TA\grot C$ extends to a continuous map $\hT(A\grot C)\map \hT A\grot C$.
\end{itemize}

We refer the readers to \cite{CunWeyl} for the explicit construction of $\hT A$ for an arbitrary locally convex algebra $A$. Define $\hJ A$ as before and set $\hJ^l A=\hJ^{l-1}(\hJ A)$. Let $\cK$ denote the algebra of smooth compact operators (see section 2.2. of ibid.) and $\langle \, ,\rangle$ denote the set of diffotopy classes of continuous homomorphisms between locally convex algebras. Then for each $k\in\NN$ there is a canonical classifying map $$\langle \hJ^k A,\cK\grot B(0,1)^k\rangle\map\langle \hJ^{k+1} A,\cK\grot B(0,1)^{k+1}\rangle,$$ mapping the diffotopy class of $\alpha:\hJ^k A\map\cK\grot B(0,1)^k$ to that of $\alpha':\hJ^{k+1} A\map\cK\grot B(0,1)^{k+1}$. Here $\alpha'$ is defined as the following classifying map of extensions:

\beqn
\xymatrix{
0\ar[r] & \hJ^{k+1} A \ar[r] \ar[d]^{\alpha'} & \hT \hJ^k A \ar[r] \ar[d] & \hJ^k A \ar[r] \ar[d]^{\alpha} & 0\\
0\ar[r] & B'(0,1)^{k+1} \ar[r] & B'(0,1)^k[0,1)\ar[r] & B'(0,1)^k \ar[r] & 0,
}
\eeqn where $B' =\cK\grot B$ and the bottom sequence is the standard suspension-cone extension of $B'(0,1)^k$.

\begin{defn}[Cuntz, Definition 15.4. of \cite{CunWeyl}]
Given any two locally convex algebras $A,B$ one defines the bivariant $\K$-theory groups as

\beq
\kk_n(A,B) &=\begin{cases} 
\dlim_k \langle \hJ^{k}(J^{n} A),\cK\grot B(0,1)^k\rangle & \text{if  } n\in\NN, \\
\dlim_k \langle \hJ^{k}(A),\cK\grot B(0,1)^{k-n}\rangle & \text{if  } n\in\ZZ\setminus\NN.
\end{cases}
\eeq
\end{defn}

\noindent
Now we define the $\K$-theory of a locally convex algebra $A$ as $\K_n(A)=\kk_n(\CC,A)$. These bivariant $\K$-theory groups agree with those defined earlier in \cite{CunBivCh}, when restricted to the category of $m$-algebras.

\begin{rem}
One can also define the dual $\K$-homology theory by setting $\K^n(A)=\kk_n(A,\CC)$. However, these $\K$-homology groups will not agree with Kasparov's (analytic) $\K$-homology groups, when restricted to the category of separable $C^*$-algebras. We rectify this problem in Section \ref{KHom} by introducing a different version of bivariant $\K$-theory for separable $\sC$-algebras.
\end{rem}

For each $n\in\NN$, the association $A\mapsto\K_n(A)$ defines a covariant diffotopy invariant abelian group valued functor on the category of locally convex algebras. Phillips generalized the $\RK$-theory for $\sC$-algebras in \cite{PhiFreK} to a $\K$-theory for all Fr{\'e}chet algebras, which can be written as a countable inverse limit of Banach algebras (such that all the connecting homomorphisms and the canonical projections have dense ranges). Let us denote this $\K$-theory of Phillips by $\K^P$-theory.

\begin{lem} \label{KP}
Restricted to the category of $\sC$-algebras, the functors $\K_*(-)$ satisfy all the formal properties of $\RK$-theory mentioned above \ref{RK}.
\end{lem}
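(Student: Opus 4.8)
The plan is to reduce the assertion to Phillips's theory. On the category of $\sC$-algebras, $\K_*(-)=\KK_*(\CC,-)$ should be naturally isomorphic to $\RK_*(-)$ (equivalently, to $\K^P_*(-)$), and then the properties in \ref{RK}, proved by Phillips for $\RK$, transfer. If this identification is already recorded in \cite{CunBivCh,CunWeyl} one simply cites it; otherwise I would argue as below. Note that a $\sC$-algebra is in particular an $m$-algebra, so on it $\K_*$ coincides with the bivariant theory of \cite{CunBivCh}, whose structural properties I shall use freely.

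Several of the listed properties come essentially for free. By construction $\K_*$ is diffotopy invariant, $\cK$-stable (with $\cK$ the algebra of smooth compact operators), and half-exact, i.e.\ it carries a six-term exact sequence for every extension of locally convex algebras admitting a continuous linear section \cite{CunBivCh,CunWeyl}. Since an extension of $\sC$-algebras is an inverse limit of completely-positively-split extensions of $C^*$-algebras, one obtains such a section, so $\K_*$ has six-term sequences on all $\sC$-algebra extensions, and in particular it is split exact and additive on finite products. Cuntz's identification $\KK_*(\CC,A)\cong\K_*(A)$ for a $C^*$-algebra $A$ \cite{CunBivCh} is the agreement property on $C^*$-algebras (so $\K_*\cong\RK_*$ there), and Bott periodicity is built into the definition via the suspension functor $J$.

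For the global comparison: $\RK_*$ is homotopy invariant---hence diffotopy invariant---and $\cpt$-stable; since $\cK$ is a dense holomorphically closed subalgebra of $\cpt$, a Karoubi-density argument makes $\RK_*$ also $\cK$-stable, and it is half-exact as well. As $\KK_*(\CC,-)$ is the universal diffotopy-invariant, $\cK$-stable, half-exact bivariant functor \cite{CunWeyl}, there is a natural transformation $c\colon\K_*(-)\Rightarrow\RK_*(-)$ on $\sC$-algebras, normalized on $\CC$, and $c$ is an isomorphism on $C^*$-algebras (there both functors are topological $\K$-theory and a normalized self-transformation of the latter is the identity). To deduce that $c_A$ is an isomorphism for an arbitrary $\sC$-algebra $A\cong\ilim_n A_n$ (with $A_n$ $C^*$-algebras and surjective connecting maps) I would compare the Milnor $\ilim^1$-sequences for $\K_*$ and for $\RK_*$ attached to this tower: their outer terms are $\ilim_n$ and $\ilim^1_n$ of the isomorphic groups $\K_*(A_n)\cong\RK_*(A_n)$, so $c$ is an isomorphism on them and the five lemma finishes. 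Granting this, homotopy invariance, $\cpt$-stability, countable products, and the remaining items of \ref{RK} are all inherited from $\RK_*$.

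The step I expect to be the main obstacle is making the Milnor $\ilim^1$-sequence for $\K_*$ available at all. The standard argument would derive it from the extension $0\to\ilim_n A_n\to\prod_n A_n\xrightarrow{\id-s}\prod_n A_n\to 0$ together with the fact that $\K_*$ turns the countable product into a product of $\K$-groups---and the latter, already for a countable family of $C^*$-algebras, is \emph{not} a formal consequence of the Cuntz axioms, $\K$-theory failing to commute with infinite products in general; moreover the displayed extension has no evident continuous linear section, so Cuntz's excision does not apply to it directly. One therefore has to establish the product formula for $\K_*$ by hand---for instance by writing $\prod_n A_n=\ilim_k\prod_{n\le k}A_n$ as a tower with split-surjective connecting maps and analysing how $\KK_*(\CC,\prod_n A_n)$ is computed through the tensor-algebra construction of \cite{CunWeyl}, or by invoking a continuity property of Cuntz's bivariant groups under such inverse limits. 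Once the product formula is secured, the $\ilim^1$-sequence follows by the standard mapping-telescope argument, $c$ becomes a natural isomorphism on $\sC$-algebras, and the whole of \ref{RK} transfers to $\K_*$.
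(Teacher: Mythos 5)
Your primary route is exactly the paper's: the identification you hope is ``already recorded'' is indeed Theorem 20.2 of \cite{CunBivCh}, giving a natural isomorphism $\K^P_*(A)\cong\K_*(A)$ for Fr\'echet algebras that are countable reduced inverse limits of Banach algebras, and the paper's entire proof is that one citation. Your fallback argument (with its honestly flagged difficulty about the product formula and the Milnor $\ilim^1$-sequence for $\K_*$) is therefore not needed.
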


\begin{proof}
The assertion follows from Theorem 20.2 of \cite{CunBivCh}, which says that there is a natural isomorphism $\K^P_*(A)\cong\K_*(A)$ if $A$ is a Fr{\'e}chet algebra as described above.
\end{proof}

\subsection{Twisted $\K$-theory via continuous trace algebras} \label{CT}
In the sequel we denote by $PU$ the projective unitary group of a separable Hilbert space. If $X$ is a compact space and $P$ is a principal $PU$-bundle on $X$, then one can define a stable continuous trace $C^*$-algebra $\CT(X,P)$, whose topological $\K$-theory is defined to be the twisted $\K$-theory \cite{RosCT}. A good reference for the general theory of continuous trace $C^*$-algebras is \cite{RaeWil}. We are going to generalize this construction to the case where $X$ is a countably compactly generated and paracompact space.

\begin{rem}
It follows from Kuiper's Theorem that $BPU$ has the homotopy type of $K(\ZZ,3)$, so that $\H^3(X,\ZZ)=[X,BPU]$. We need the assumption of paracompactness on $X=\dlim_n X_n$ in order to assert that isomorphism classes of principal $PU$-bundles on $X$ are in bijection with homotopy classes of maps $X\map BPU$. Every compact smooth manifold is paracompact and so is a countable direct limit of such manifolds (see Proposition 3.6 of \cite{GloInfDimLie}).
\end{rem}

Consider a category whose objects are pairs $(X,P)$, where $X$ is a countably compactly generated and paracompact space and $P$ is a fixed choice of principal $PU$-bundle on $X$. A morphism $(X,P)\map (X',P')$ in this category is a continuous map $f:X\map X'$, such that there is a specified isomorphism $f^*(P') \overset{\sim}{\map} P$. Let $\CT(X,P)$ denote the $*$-algebra of all continuous sections of the associated bundle $P_\cpt:= P\times_{PU}\cpt\map X$, where $\cpt$ denotes the $C^*$-algebra of compact operators.

The direct limit does not change if one passes to a cofinal subsystem. Therefore, by passing to a cofinal subsystem we may assume that $X=\cup_i X_i$, where each $X_i$ is compact and $X_0\overset{\iota_0}{\hookrightarrow}X_1\overset{\iota_1}{\hookrightarrow} X_2\cdots$ is a countable system of continuous inclusions. One can restrict the principal $PU$-bundle $P$ via the canonical continuous inclusions $X_i\map X$, which we denote by $P_i$. Each $\CT(X_i,P_i)$ is a stable continuous trace $C^*$-algebra and, in fact, $\{(\CT(X_i,P_i),\iota_i^*)\}$ forms a countable inverse system of $C^*$-algebras and $*$-homomorphisms. The canonical inclusions $X_i\map X$ induce $*$-homomorphisms $\CT(X,P)\map\CT(X_i,P_i)$, which assemble to produce a $*$-homomorphism $\CT(X,P)\overset{\gamma}{\map} \ilim_i \CT(X_i,P_i)$.

\begin{lem}
Let $X$ be a countably compactly generated and paracompact space and $P$ be a principal $PU$-bundle on $X$. Then $\CT(X,P)$ admits a topology making it a $\sC$-algebra.
\end{lem}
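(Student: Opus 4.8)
The plan is to identify $\CT(X,P)$, as a topological $*$-algebra, with the pro $C^*$-algebra $\ilim_i\CT(X_i,P_i)$ formed inside $\TAlg$. Since the latter is a countable inverse limit of $C^*$-algebras it is a $\sC$-algebra by the discussion at the beginning of this section, so transporting its topology along the identification equips $\CT(X,P)$ with the desired structure; equivalently, one may describe that topology directly by the countable family of $C^*$-seminorms $s\mapsto\sup_{x\in X_i}\|s(x)\|$. Concretely, I would show that the canonical $*$-homomorphism $\gamma\colon\CT(X,P)\to\ilim_i\CT(X_i,P_i)$ constructed in the paragraph preceding the lemma is bijective.

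After replacing the inverse system by a cofinal subsystem we may assume $X=\bigcup_i X_i$ with each inclusion $X_i\hookrightarrow X_{i+1}$ closed, and that $X$ carries the direct limit topology with respect to the inclusions $\iota_i\colon X_i\hookrightarrow X$. The key point is then that a set-theoretic section $s$ of $\pi\colon P_\cpt\to X$ is continuous if and only if each restriction $s|_{X_i}$ is continuous, where I use that the restriction $(P_\cpt)|_{X_i}=\pi^{-1}(X_i)$, taken with the subspace topology, is the same as the associated bundle $P_i\times_{PU}\cpt$ whose sections constitute $\CT(X_i,P_i)$. Granting this, injectivity of $\gamma$ is immediate: a continuous section vanishing on every $X_i$ vanishes on $\bigcup_i X_i=X$. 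For surjectivity, given a compatible family $(s_i)_i$ with $s_{i+1}|_{X_i}=s_i$, set $s(x):=s_i(x)$ for $x\in X_i$; this is well defined, it is a section since $\pi(s_i(x))=x$, and it is continuous because $s\circ\iota_i=s_i$ for each $i$, so by the universal property of the direct limit topology $s\in\CT(X,P)$ and $\gamma(s)=(s_i)_i$. Hence $\gamma$ is a $*$-isomorphism and $\CT(X,P)$ becomes a countable inverse limit of $C^*$-algebras, i.e. a $\sC$-algebra.

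The main thing needing care — and the step I would flag as the real content — is the interplay of topologies packaged in the equivalence between continuity of $s$ and continuity of all the restrictions $s|_{X_i}$: one must know that the direct limit topology of the $X_i$ is genuinely the topology of $X$ (true by the definition of a countably compactly generated space, together with the fact that passing to a cofinal subsystem does not change the colimit) and that restricting the associated bundle to a compact piece is unambiguous, whether one first restricts $P$ and then forms $P_i\times_{PU}\cpt$ or first forms $P_\cpt$ and then takes $\pi^{-1}(X_i)$. Everything else — that restriction of sections defines $*$-homomorphisms, that the $\CT(X_i,P_i)$ form an inverse system of $C^*$-algebras, and that a countable inverse limit in $\TAlg$ is a $\sC$-algebra — is either formal or already recorded above. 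Note that paracompactness of $X$ is not used in this lemma; it enters only later, in order to classify the bundles $P$ themselves.
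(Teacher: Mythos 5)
Your proposal is correct and follows exactly the route of the paper's proof: the paper asserts (with the details left as "it can be verified") that the canonical $*$-homomorphism $\gamma\colon\CT(X,P)\to\ilim_i\CT(X_i,P_i)$ is an isomorphism of $*$-algebras and then transports the $\sC$-algebra topology of the inverse limit along $\gamma$. You have simply supplied the omitted verification — injectivity and surjectivity of $\gamma$ via the universal property of the direct limit topology and the compatibility of bundle restriction — together with the accurate observation that paracompactness is not needed for this particular lemma.
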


\begin{proof}
It can be verified that the $*$-homomorphism $\gamma$ constructed above is an isomorphism of $*$-algebras and $\ilim_i\CT(X_i,P_i)$ is by construction a $\sC$-algebra. We may now topologise $\CT(X,P)$ via the isomorphism $\gamma$ making it a $\sC$-algebra.
\end{proof}

The association $(X,P)\mapsto \CT(X,P)$ is functorial with respect to the morphisms of pairs described above and takes values in the category of $\sC$-algebras with $*$-homomorphisms. In the spirit of \cite{RosCT} we propose the following generalization of twisted $\K$-theory to countably compactly generated and paracompact spaces.

\begin{defn}
Let $X$ be a countably compactly generated space, which is, in addition, paracompact. If $P$ is a principal $PU$-bundle on $X$, then we define the $\K$-theory of $\CT(X,P)$ to be the twisted $\K$-theory of the pair $(X,P)$.
\end{defn}

\begin{rem}
Note that the $\K$-theory we defined above applies to a $\sC$-algebra. Since the twisted $\K$-theory groups only depend on the cohomology class $[P]=\eta\in\H^3(X,\ZZ)$ determined by $P$ (up to isomorphism), one may (somewhat sloppily) refer to the twisted $\K$-theory groups of $(X,P)$ as those of of $(X,\eta)$.
\end{rem}

\begin{lem}
Twisted $\K$-theory satisfies Bott periodicity, $C^*$-stability and Milnor $\ilim^1$-sequence for an inverse limit of $\sC$-algebras.
\end{lem}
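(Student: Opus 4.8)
The plan is to reduce the statement to the corresponding formal properties of $\RK$-theory (equivalently $\K_*$) recorded in the list \ref{RK}, via Lemma \ref{KP}. Since twisted $\K$-theory of $(X,P)$ is by definition $\K_*(\CT(X,P))$ and $\CT(X,P)$ is a $\sC$-algebra (by the preceding lemma), and since by Lemma \ref{KP} the functors $\K_*$ restricted to $\sC$-algebras enjoy all the properties of $\RK$-theory, the three claimed properties come essentially for free once one checks that the relevant constructions take place inside the category of $\sC$-algebras and respect the $\CT$-functor. Concretely: Bott periodicity is immediately item (2) of \ref{RK} applied to $A=\CT(X,P)$; $C^*$-stability, i.e. $\K_*(\CT(X,P))\cong\K_*(\CT(X,P)\prot\cpt)$, is item (4); and the Milnor $\ilim^1$-sequence for an inverse system of $\sC$-algebras with surjective connecting maps is item (6).

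First I would state explicitly that $\CT(X,P)$ is a $\sC$-algebra, so that all of \ref{RK} is available through Lemma \ref{KP}. For Bott periodicity and $C^*$-stability there is then nothing more to do than invoke items (2) and (4) of the list. For the $\ilim^1$-statement I would be slightly more careful about what is being asserted: given a countable inverse system $\{(X_n,P_n)\}$ of such pairs (with the morphisms of pairs as defined above), functoriality of $(X,P)\mapsto\CT(X,P)$ produces a countable inverse system of $\sC$-algebras $\{\CT(X_n,P_n)\}$; after replacing each term by the intersection of the images of all maps into it (the standard trick recalled earlier, which does not change the inverse limit), the connecting maps become surjective, and item (6) of \ref{RK} yields the exact sequence
\[
0\to{\ilim_n}^1\,\K_{1-i}(\CT(X_n,P_n))\to\K_i(\ilim_n\CT(X_n,P_n))\to\ilim_n\K_i(\CT(X_n,P_n))\to 0,
\]
which is the asserted Milnor sequence for twisted $\K$-theory. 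One should remark that $\ilim_n\CT(X_n,P_n)$ is again of the form $\CT(X,P)$ for the direct limit space $X=\dlim_n X_n$ with the evident limiting bundle, compatibly with the isomorphism $\gamma$ of the previous lemma, so the sequence really expresses the twisted $\K$-theory of a single pair as built out of those of the $X_n$.

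I do not expect a genuine obstacle here; the content of the lemma is bookkeeping, checking that each structural operation (suspension, stabilization by $\cpt$, countable inverse limit with surjective bonds) stays within $\sC$-algebras and is compatible with the $\CT$-construction, after which \ref{RK} does all the work. The only point demanding any attention is the surjectivity normalization of the connecting homomorphisms needed to apply item (6): one must note that replacing $\CT(X_n,P_n)$ by the intersection of the images from higher indices is harmless because it leaves the inverse limit $\sC$-algebra unchanged, exactly as in the general discussion of $\sC$-algebras recalled above.
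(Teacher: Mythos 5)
Your proposal is correct and follows exactly the paper's route: the paper's proof is the one-line observation that $\RK$-theory of $\sC$-algebras satisfies these properties, so twisted $\K$-theory inherits them via Lemma \ref{KP}. Your additional bookkeeping (identifying items (2), (4), (6) of the list and noting the surjectivity normalization for the Milnor sequence) merely makes explicit what the paper leaves implicit.
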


\begin{proof}
Since $\RK$-theory of $\sC$-algebras satisfies these properties, twisted $\K$-theory inherits them (see Lemma \ref{KP}).
\end{proof} The Milnor $\ilim^1$-sequence and the property \eqref{agreement}, i.e., agreement with the topological $\K$-theory of $C^*$-algebras, give a procedure to compute twisted $\K$-theory.

\begin{ex} \label{SUK}
Let $P$ be a principal $PU$-bundle on $SU(\infty)$, such that its cohomology class is $\ell\in\H^3(SU(\infty),\ZZ)\simeq\ZZ$. The inclusion $\iota_n:SU(n)\hookrightarrow SU(\infty)$, induces a homomorphism $\iota^*_n:\H^3(SU(\infty),\ZZ)\simeq\ZZ\map \ZZ\simeq\H^3(SU(n),\ZZ)$, which is identity. From the computations of Braun \cite{BraTwist} and Douglas \cite{DouTwist}, it is known that the twisted $\K$-theory groups of the pair $(SU(n),\ell)$ are finite abelian groups for all $n$ and $\ell$. More precisely, as an abelian group $$\K_\bullet(\CT(SU(n),\iota^*_n(P)))\cong\K^\bullet(SU(n),\ell)\simeq  (\ZZ/c(n,\ell))^{2^{n-1}},$$ where $\K^\bullet=\K^0\oplus\K^1$ and $c(n,\ell)=gcd\{\binom{\ell + i}{i}-1\,:\, 1\leq i\leq n-1\}$. Now, by construction, $\CT(SU(\infty),P)$ is the $\sC$-algebra $\ilim_n\CT(SU(n),\iota^*_n(P))$. One use the Milnor $\ilim^1$-sequence and the fact that an inverse system of finite abelian groups satisfies the Mittag--Leffler condition, i.e., the $\ilim^1$-term vanishes, to deduce that 
$$\K^\bullet(SU(\infty),\ell)\cong\ilim_n \K^\bullet(SU(n),\ell)\simeq\ilim_n(\ZZ/c(n,\ell))^{2^{n-1}}.$$ This is an example of a profinite group, which is easily seen to be pure torsion. Indeed, it is evident that $c(n,\ell)$ divides $c(m,\ell)$ if $n\geqslant m$. Since for a fixed $\ell$, the number $c(n,\ell)$ decreases to $0$ as $n$ increases, the profinite group $\ilim_n (\ZZ/c(n,\ell))^{2^{n-1}}$ is actually trivial.
\end{ex}

\begin{rem}
In the same vein one can construct $\CT(X,P)$ as a locally convex algebra for any (paracompact) compactly generated space $X$ and study its twisted $\K$-theory.
\end{rem}

\section{Twisted periodic cyclic homology via locally convex algebras} \label{HPT}
It is well-known that cyclic homology theory is rather poorly behaved on the category of $C^*$-algebras. In fact they vanish on the category of stable $C^*$-algebras \cite{WodFunAna}. Usually one defines cyclic homology theory with respect to a smooth $*$-subalgebra. For instance, if $X$ is a compact smooth manifold, then Connes computed the cyclic cohomology theory in terms of the Fr{\'e}chet algebra $\C^\infty(X)$ in the seminal paper \cite{ConDiff}. The periodic cyclic homology of $\C^\infty(X)$ is naturally related to de Rham cohomology of $X$ in the following manner:

\beq \label{dR}
\HP_0(\C^\infty(X))\cong\H_{dR}^\ev(X,\CC) \text{  and } \HP_1(\C^\infty(X))\cong\H_{dR}^\od(X,\CC),
\eeq where $\H^\ev = \oplus_n \H^{2n}_{dR}$ and $\H^\od =\oplus_n\H^{2n+1}_{dR}$. Let us recall some basic facts about cyclic homology theory. We follow the $\X$-complex formalism following \cite{CunQui1,CunQui2}. Some of the results stated in the generality of all locally convex algebras can be found in Chapter 4 of \cite{MeyCycHom}.

For any locally convex algebra $A$, let $A^+$ denote the unitization of $A$, which is again a locally convex algebra. Set $\Omega^0(A)=A$ and $\Omega^n(A):= A^+\grot A^{\grot n}$ for $n\geqslant 1$, which is the space of noncommutative $n$-forms. One defines $\Omega^\ev(A):=\prod_{n=0}^\infty \Omega^{2n}(A)$ and $\Omega^\od(A):=\prod_{n=0}^\infty \Omega^{2n+1}(A)$. The {\em Fedosov product} $\circ$ on $\Omega(A):=\prod_{n=0}^\infty \Omega^{n}(A)$

$$\omega\circ\eta=\omega\eta -(-1)^{kl} d\omega d\eta, \text{       for $\omega\in\Omega^k(A)$, $\eta\in\Omega^l(A)$}$$ converts $(\Omega(A),\circ)$ into an associative algebra and $\Omega^\ev(A)$ a subalgebra thereof. Let the finite product $\T_n(A):=\prod_{j=0}^n\Omega^{2j}(A)$ be the canonical quotient algebra of $\Omega^\ev(A)$ with the truncated Fedosov product, i.e., $\omega\circ\eta =0$ if $\deg(\omega)+\deg(\eta)>2n$. Then $\T_\infty(A):=\ilim_n \T_n(A)\cong\Omega^\ev(A)$ as associative algebras.

The $\X$-complex of an algebra is a very simple complex, which is useful in establishing various formal properties of periodic cyclic homology. For any algebra $A$ the following $\ZZ/2$-graded complex

\beq
A \overset{\delta}{\underset{\beta}{\rightleftarrows}} \Omega^1(A)/[A,\Omega^1(A)]
\eeq is called the $\X$-complex of $A$, denoted by $\X(A)$. Here $\delta=\pi\circ d$ with $\pi:\Omega^1(A)\map \Omega^1(A)/[A,\Omega^1(A)]$ the canonical quotient map, $d:A\map \Omega^1(A)$ the differential satisfying $d(a+b)=da + db$, $d(ab)=adb + d(a)b$, and $\beta:\Omega^1(A)/[A,\Omega^1(A)]\map A$ the map induced by the linear map $b:\Omega^1(A)\map A$ sending $adb\mapsto [a,b], db\mapsto 0, a\mapsto 0$ for all $a,b\in A$.

There is a canonical algebra homomorphism $\pi_A:\T_\infty(A)\map \T_0(A)\cong A$ given by the projection onto the first summand. Setting $\cJ_\infty(A)=\ker(\pi_A)$ one obtains an algebra extension $0\map \cJ_\infty(A)\map \T_\infty(A)\overset{\pi_A}{\map} A\map 0$, which admits a linear splitting $\sigma_A :A\map \T_\infty(A)$ given by the inclusion of $A$ onto the first summand.
An algebra $A$ is called {\em quasi-free} if this algebra extension actually splits. There are various other equivalent definitions of a quasi-free algebra. The Cuntz--Quillen formalism says that for a quasi-free algebra $A$ the periodic cyclic homology can be computed using $\X(A)$, i.e., there is a natural isomorphism $\HP_*(A)\cong \H_*(\X(A))$. Natural examples of quasi-free algebras are not abundant; however, $\CC$ and $M_n(\CC)$ are quasi-free. Moreover, there is a functorial way to manufacture a quasi-free algebra from any algebra. For any locally convex algebra $A$, the algebra $\T_\infty(A)$ constructed above is always quasi-free, whence $\HP_*(A)\cong\H_*(\X(\T_\infty(A)))$. In view of the above natural isomorphism, one can take the right hand side as the definition of periodic cyclic homology of a locally convex algebra.

\begin{defn}
Given any two locally convex algebras $A,B$ one defines the bivariant periodic cyclic homology groups as $$\HP_*(A,B)= \H_*(\Hom(\X(\T_\infty(A)),\X(\T_\infty(B)))).$$

\noindent
In particular, the periodic cyclic homology (resp. cohomology) groups of $A$ are defined as
$$ \text{$\HP_*(A)=\HP_*(\CC,A)$ (resp. $\HP^*(A)=\HP_*(A,\CC)$).}  $$
\end{defn}
\noindent
Here $\Hom(\X(-),\X(?))$ is the mapping Hom-complex between $\ZZ/2$-graded complexes (see subsection \ref{locHom} below for some more details).

\subsection{Periodic cyclic homology of countable inductive limit of smooth compact manifolds}
Let $X=\dlim_n X_n$ be a countably compactly generated space. Then $\C(X)=\ilim_n \C(X_n)$ is a $\sC$-algebra. Now suppose, in addition, that each $X_n$ is a smooth compact manifold and that the connecting maps $f_n:X_n\map X_{n+1}$ are also smooth, then $\{\C^\infty(X_n),f^*_n\}$ forms a countable inverse system of locally multipicatively convex (Fr{\'e}chet) algebras and continuous homomorphisms. Let us set $\C_f^\infty(X)=\ilim_n \C^\infty(X_n)$ as the algebra of {\em formal smooth functions}, where the inverse limit is constructed in the category of locally convex algebras. Being an inverse limit of complete locally multiplicatively convex algebras, $\C_f^\infty(X)$ is also a complete locally multiplicatively convex algebra (see Page 84 of \cite{Mallios}). We may now define the (formal) periodic cyclic homology of $X$ as $\HP_*(\C_f^\infty(X))$, where $\C_f^\infty(X)$ is viewed as a locally multipicatively convex algebra. Let $\{A_n,f_n\}$ be a countable inverse system of topological algebras and 
continuous homomorphisms. The inverse system is called {\em reduced} if the canonical maps $\ilim_n A_n\map A_m$ have dense range for all $m\in\NN$. The periodic cyclic homology of the inverse limit $A=\ilim_n A_n$ of a reduced countable inverse system of Fr{\'e}chet algebras can be computed from the following short exact sequence (see Theorem 5.4 of \cite{Lyk})
\beq \label{lim1}
0\map{\ilim}_n^1\HP_{*+1}(A_n)\map\HP_*(A)\map\ilim_n\HP_*(A_n)\map 0.
\eeq

\begin{lem}
Let $X_0\overset{f_0}{\hookrightarrow} X_1\overset{f_1}{\hookrightarrow} \cdots$ be a countable sequence of (smooth) inclusions of compact smooth manifolds. Then $\HP_*(\C^\infty_f(X))\cong\ilim_n\HP_*(\C^\infty(X_n))$.
\end{lem}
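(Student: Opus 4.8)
The plan is to feed the inverse system $\{\C^\infty(X_n),f^*_n\}$ into the Milnor $\ilim^1$-sequence \eqref{lim1} and then to show that the $\ilim^1$-term vanishes. First I would verify that this system is \emph{reduced}. Each $f_n\colon X_n\hookrightarrow X_{n+1}$ is a smooth inclusion of a compact manifold, hence an embedding onto a closed submanifold; since smooth functions extend from a closed submanifold to the ambient manifold (e.g.\ via a tubular neighbourhood and a bump function), each connecting homomorphism $f^*_n\colon\C^\infty(X_{n+1})\map\C^\infty(X_n)$ is surjective. For a countable inverse system of Fr\'echet spaces with surjective connecting maps the canonical projection $\C^\infty_f(X)=\ilim_n\C^\infty(X_n)\map\C^\infty(X_m)$ is again surjective, so in particular has dense range; thus $\{\C^\infty(X_n),f^*_n\}$ satisfies the hypotheses of \eqref{lim1}.

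Next, \eqref{lim1} supplies the short exact sequence
$$0\map{\ilim}^1_n\HP_{*+1}(\C^\infty(X_n))\map\HP_*(\C^\infty_f(X))\map\ilim_n\HP_*(\C^\infty(X_n))\map 0,$$
and it remains to see that ${\ilim}^1_n\HP_{*+1}(\C^\infty(X_n))=0$. Here I would invoke Connes' computation \eqref{dR}: $\HP_0(\C^\infty(X_n))\cong\H_{dR}^\ev(X_n,\CC)$ and $\HP_1(\C^\infty(X_n))\cong\H_{dR}^\od(X_n,\CC)$, both of which are finite-dimensional complex vector spaces because $X_n$ is a compact manifold. A countable inverse system of finite-dimensional vector spaces automatically satisfies the Mittag--Leffler condition, since for fixed $n$ the images of $\HP_{*+1}(\C^\infty(X_m))\map\HP_{*+1}(\C^\infty(X_n))$ form a decreasing chain of subspaces of a finite-dimensional space and must therefore stabilize. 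Hence the $\ilim^1$-term vanishes and the right-hand arrow of the displayed sequence is an isomorphism, which is the assertion.

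The argument is almost entirely formal once \eqref{lim1} is available. The only step requiring a genuine (though routine) argument is the reducedness of the system, i.e.\ the surjectivity of the restriction maps $\C^\infty(X_{n+1})\map\C^\infty(X_n)$ --- this is precisely where one uses that the $f_n$ are smooth \emph{inclusions} and not merely smooth maps. The vanishing of $\ilim^1$ is immediate from the finiteness of de Rham cohomology of a compact manifold, so I do not anticipate any real difficulty there.
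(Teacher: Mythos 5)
Your proof is correct and follows essentially the same route as the paper: feed the system into the Milnor $\ilim^1$-sequence \eqref{lim1} and kill the $\ilim^1$-term via \eqref{dR} and the Mittag--Leffler condition for inverse systems of finite-dimensional vector spaces. The only (immaterial) difference is that you justify reducedness directly by extending smooth functions off a closed submanifold, whereas the paper simply cites Lemma 1.4 of \cite{PhiFreK} for this point.
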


\begin{proof}
It follows from Lemma 1.4. of \cite{PhiFreK} that $\{\C^\infty(X_n),f^*_n\}$ is a reduced countable inverse system of Fr{\'e}chet algebras. Applying equation \eqref{lim1} to the reduced countable inverse system $\{\C^\infty(X_n),f^*_n\}$, we get

$$0\map{\ilim}_n^1\HP_{*+1}(\C^\infty(X_n))\map\HP_*(\C^\infty_f(X))\map\ilim_n\HP_*(\C^\infty(X_n))\map 0.$$ From equation \eqref{dR} we conclude that $\{\HP_*(\C^\infty(X_n)),f^*_n\}$ is a countable inverse system of finite dimensional real vector spaces, whence the Mittag--Leffler condition is satisfied. Consequently the $\ilim^1$-term vanishes and the result follows.
\end{proof}

Note that in the above discussion we did not claim that the inductive limit space $X$ admits a reasonable smooth manifold structure. The two specific examples $U(\infty), SU(\infty)$ are, of course, infinite dimensional Lie groups (regular in the sense of Milnor) modelled on {\em convenient topological vector spaces} \cite{KriMic}. In these examples each connecting map $f_n$ is actually a smooth inclusion of Lie groups. In fact, it is known that if each $X_n$ is a finite dimensional smooth manifold and each $f_n$ is a smooth immersion, then the topological direct limit $X=\dlim_n X_n$ can be endowed with a smooth structure turning $X$ into a (possibly infinite dimensional) smooth manifold, such that $X$ is the direct limit in the category of smooth manifolds (modelled on topological vector spaces) and smooth maps (see Theorem 3.1 of \cite{GloInfDimLie}). Therefore, one can define the algebra $\C^\infty(X)$ of genuine smooth functions on $X$.

\begin{rem}
In the absence of a smooth structure on $X$, the algebra of formal smooth functions $\C^\infty_f(X)$ (resp. its periodic cyclic homology $\HP_*(\C^\infty_f(X))$) is potentially a good replacement for the algebra of genuine smooth functions and the $\ZZ/2$-periodic version of the de Rham cohomology groups.
\end{rem}

\begin{prop}
Let $X_0\overset{f_0}{\hookrightarrow} X_1\overset{f_1}{\hookrightarrow} \cdots$ be a countable sequence of (smooth) inclusions of compact smooth manifolds, such that the inductive limit $X=\dlim_n X_n$ exists as a smooth manifold. Then there is an algebra isomorphism $\C^\infty(X)\cong\C_f^\infty(X)$.
\end{prop}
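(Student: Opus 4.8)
The plan is to show that the tautological restriction map is the claimed isomorphism. Write $\iota_n\colon X_n\hookrightarrow X$ for the canonical smooth inclusions into the direct limit manifold, and define
$$\rho\colon\C^\infty(X)\map\C_f^\infty(X)=\ilim_n\C^\infty(X_n),\qquad \rho(g)=(g\circ\iota_n)_n.$$
First I would check $\rho$ is well defined and an algebra homomorphism. Since $\iota_{n+1}\circ f_n=\iota_n$ as smooth maps $X_n\map X$, one has $f_n^*(g\circ\iota_{n+1})=g\circ\iota_n$, so the family $(g\circ\iota_n)_n$ is compatible with the connecting maps $f_n^*$ of the inverse system and hence lies in $\ilim_n\C^\infty(X_n)$; and each restriction map $\C^\infty(X)\map\C^\infty(X_n)$ is an algebra map (addition and multiplication being pointwise), so $\rho$ is one.

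Next, bijectivity. Injectivity is immediate: because $X=\dlim_n X_n$ carries the direct limit topology and all $f_n$ are inclusions, the underlying set of $X$ is $\bigcup_n\iota_n(X_n)$, so a smooth function vanishing on every $X_n$ vanishes identically, i.e.\ $\ker\rho=0$. For surjectivity, take a compatible family $(g_n)_n$, so $g_{n+1}\circ f_n=g_n$, and define $g\colon X\map\CC$ by $g(x)=g_n(x)$ whenever $x\in\iota_n(X_n)$; this is well defined exactly by the compatibility condition. It remains to see $g$ is smooth, and here I would invoke the hypothesis that $X$ is the direct limit of the $X_n$ \emph{in the category of smooth manifolds and smooth maps} (Theorem 3.1 of \cite{GloInfDimLie}): applying the universal property of this colimit to the finite-dimensional target manifold $\CC\cong\RR^2$ shows that a map $g\colon X\map\CC$ is smooth if and only if each composite $g\circ\iota_n=g_n$ is smooth, which holds by assumption. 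Thus $g\in\C^\infty(X)$ with $\rho(g)=(g_n)_n$, and $\rho$ is a bijective algebra homomorphism, hence an algebra isomorphism. (One could moreover upgrade this to a topological isomorphism once $\C^\infty(X)$ is equipped with the locally convex topology of uniform convergence of all derivatives along the $X_n$, but this is not needed for the statement.)

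The main obstacle — such as it is — is the smoothness step in the surjectivity argument: passing from ``each $g_n$ is smooth'' to ``$g$ is smooth on the (possibly infinite dimensional) manifold $X$''. This is precisely the content of the hypothesis that $X=\dlim_n X_n$ holds in the category of smooth manifolds, so all the work has effectively been outsourced to \cite{GloInfDimLie}; once that is granted, the remainder is the routine verification that restriction is an algebra map and that a function zero on each $X_n$ is zero. In essence the proposition records that the set-level universal property $\C^\infty(X,\CC)\cong\ilim_n\C^\infty(X_n,\CC)$ is compatible with the pointwise algebra structure, which is transported along it because addition and multiplication on $\CC$ are themselves smooth.
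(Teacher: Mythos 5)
Your proof is correct and follows essentially the same route as the paper's: both construct the restriction map $g\mapsto(g\circ\iota_n)_n$ into the inverse limit and deduce bijectivity from the fact that $X$ is the colimit of the $X_n$ in the category of smooth manifolds. Your version usefully unpacks the step the paper leaves implicit, namely that surjectivity amounts to applying the universal property of that colimit to the target $\CC$ to get smoothness of the glued function.
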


\begin{proof}
The canonical inclusions $i_n:X_n\map X$ are smooth and they produce algebra homomorphisms $i_n^*:\C^\infty(X)\map\C^\infty(X_n)$, which actually assemble to produce a map to the inverse system $\{\C^\infty(X_n),f_n^*\}$. Consequently there is an induced map to the inverse limit, i.e., $i=\ilim_n i_n: \C^\infty(X)\map\ilim_n\C^\infty(X_n)$. It follows from the fact that $X$ is the inductive limit of $\{X_n,f_n\}$ in the category of smooth manifolds with smooth maps, that $\ilim_n i_n$ is an isomorphism of algebras.
\end{proof}

 Now we endow $\C^\infty(X)$ with the locally multiplicatively convex topology via the above isomorphism, so that $\HP_*(\C^\infty(X))\cong\ilim_n\HP_*(\C^\infty(X_n))$.

 \begin{ex}
 Let $\H^\bullet_{dR}$ (resp. $\HP_\bullet$) denote $\H^\ev_{dR}\oplus \H^\od_{dR}$ (resp. $\HP_0\oplus \HP_1$). It is well known that $\H_{dR}^\bullet(SU(n),\CC)\simeq \Lambda_\CC(x_3,\cdots, x_{2n-1})$ with $x_{2i-1}\in\H^{2i-1}_{dR}(SU(n),\CC)$ (here $\Lambda_\CC$ denotes the complex exterior algebra). The canonical inclusion $SU(n-1)\hookrightarrow SU(n)$ induces a homomorphism $\Lambda_\CC(x_3,\cdots,x_{2n-1})\map\Lambda_\CC(x_3,\cdots,x_{2n-3})$, which simply kills the generator $x_{2n-1}$. Therefore, $$\HP_\bullet(\C^\infty(SU(\infty)))\cong\ilim_n\HP_\bullet(\C^\infty(SU(n)))\cong\ilim_n\H^\bullet_{dR}(SU(n),\CC)\simeq\ilim_n \Lambda_\CC(x_3,\cdots, x_{2n-1}),$$ which agrees with the cohomology of $SU(\infty)$ with complex coefficients. The other examples, viz., $U(\infty)$ and $\CC P^\infty$ can be computed similarly.
 \end{ex}

 \subsection{Twisted periodic cyclic homology} \label{HPTwist}
 Now we study the twisted version of periodic cyclic homology. The idea is to produce a locally convex algebra from the given twisting data and define the periodic cyclic homology of that algebra as the twisted periodic cyclic homology. Here we only talk about the formal analogue of the previous subsection and make no attempt to produce a {\em genuine noncommutative twisted smooth space}.

Let $H$ be a separable infinite dimensional Hilbert space and let $\{v_n\}_{n\in\NN}$ be an orthonormal basis of $H$. Let $B(H)$ denote the $C^*$-algebra of bounded operators on $H$. One defines a semi-finite trace on the positive elements in $B(H)$ as $\tr(x)=\sum_{n=0}^\infty(xv_n,v_n)$. This trace is independent of the choice of the basis $\{v_n\}$ and satisfies $\tr(x^*x)=\tr(xx^*)$. For $p\geqslant 1$, one defines the $p$-Schatten ideal $\cL^p\subset B(H)$ as $\cL^p=\{x\in B(H)\,|\, \tr(\sqrt{x^*x})^p <\infty\}$. It is a Banach $*$-ideal with respect to the norm $\|x\|_p=(\tr(\sqrt{x^*x})^p)^{1/p}$. For all $p$, $\cL^p\subset \cpt$. By definition $\cL^1$ is the Banach $*$-ideal of {\em trace class operators}, since for this ideal the trace is finite for all elements.

As we discussed before, $C^*$-algebras and $\sC$-algebras are not the appropriate geometric objects for the study of cyclic homology theories. Therefore, we need to modify the construction of the continuous trace $\sC$-algebra $\CT(X,P)$ that was used to define the twisted $\K$-theory of the pair $(X,P)$. The authors in \cite{MatSte} provided a candidate roughly by replacing the algebra of compact operators $\cpt$ by the Banach $*$-ideal of trace class operators $\cL^1$ as described above. Given a principal $PU$-bundle $P$ on a compact smooth manifold $X$, one can form a Banach algebra bundle $\cL^1(P)=P\times_{PU}\cL^1$ with the algebra of trace class operators $\cL^1$ as the fibre associated to $P$ via the adjoint action of $PU$ on $\cL^1$. The algebra of (smooth) sections $\C^\infty(\cL^1(P))=\C^\infty(X,\cL^1(P))$ can be endowed with a Fr{\'e}chet algebra structure. For the details we refer the readers to page 308 of ibid..

Let $X_0\hookrightarrow X_1\hookrightarrow \cdots$ be a countable directed system of compact smooth manifolds and smooth immersions, such that the inductive limit is also a smooth and paracompact manifold, e.g., $SU(\infty)$. Let $P$ be a principal $PU$-bundle on $X$, whose isomorphism class determines an element in $\H^3(X,\ZZ)$. As before, by passing to a cofinal subsystem, we write $X=\cup_n X_n$  with each $X_{n-1}\subset X_n$ being a smooth inclusion of smooth compact manifolds. We denote the restricted bundle $P|_{X_n}$ on each $X_n$ by $P_n$. One constructs the Fr{\'e}chet algebras $\C^\infty(\cL^1(P_n))=\C^\infty(X_n,\cL^1(P_n))$ as described above and there are canonical restriction homomorphisms $\C^\infty(\cL^1(P_n))\map\C^\infty(\cL^1(P_{n-1}))$. Once again it follows from Lemma 1.4. of \cite{PhiFreK} that $\{\C^\infty(\cL^1(P_n))\}$ is a reduced inverse system of Fr{\'e}chet algebras. Let us define the {\em twisted smooth algebra} of $(X,P)$ as $$\C^\infty(X,\cL^1(P)) =\C^\infty(\cL^1(P))=\ilim_
n \C^\infty(\cL^1(P_n)).$$ Being an inverse limit of locally convex (Fr{\'e}chet) algebras, it is itself a locally convex algebra and can be regarded as a genuinely {\em noncommutative smooth space}. Now we may define the twisted periodic cyclic homology of $(X,P)$ as $$\HP_*(\C^\infty(\cL^1(P)))=\H_*(\X(\cT_\infty(\C^\infty(\cL^1(P))))).$$ We shall refer to the $\X$-complex $\C^\infty(\cL^1(P))$ as the {\em twisted $\X$-complex}.

\begin{ex} \label{SUHP}
It is well-known that if $A$ is a $C^*$-algebra and $\A\subset A$ is a dense subalgebra, which is closed under the holomorphic functional calculus, then the inclusion $\A\hookrightarrow A$ induces an isomorphism $\K_*(\A)\cong\K_*(A)$ (see \cite{ConDiff}, also Corollary 7.9 of \cite{PhiFreK}). Let $X=\dlim_n X_n$ and $P$ be as above. It is shown in section 4.2 of \cite{MatSte} that $\C^\infty(\cL^1(P_n))$ is a dense subalgebra of the continuous trace $C^*$-algebra $\CT(X_n,P_n)$, which is closed under the holomorphic functional calculus, whence their $\K$-theory groups agree.

Let $X=SU(\infty)$ and let $P$ be a principal $PU$-bundle on it. Set $\cB_n=\C^\infty(\cL^1(P_n))$. It is known that the twisted Chern--Connes character map $\ch_*(B_n):\K_*(\cB_n)\map\HP_*(\cB_n)$ becomes an isomorphism after tensoring with the complex numbers (see Proposition 6.1 of \cite{MatSte}). Since the twisted $\K$-theory groups are all torsion (see Example \ref{SUK} above), we conclude that the twisted periodic cyclic homology groups vanish, i.e., $\HP_*(\cB_n)=\{0\}$ for all $n$. Using the Milnor $\ilim^1$-exact sequence in periodic cyclic homology (see Equation \eqref{lim1}), we immediately deduce that the twisted periodic cyclic homology groups of $(SU(\infty),P)$ vanish as well.
\end{ex}

\section{Twisted Chern--Connes character} \label{HPCC}
Restricted to the category of $m$-algebras, the bivariant $\K$-theory for locally convex algebras described above has a universal characterization. Using this universal characterization the author constructed (see Theorem 21.2 of \cite{CunBivCh}) a multiplicative bivariant Chern--Connes character $$\ch^\biv_*:\kk_*\functor\HP_*.$$ Setting the first variable to $\CC$, we get the univariant Chern--Connes character from $\K$-theory to periodic cyclic homology $\ch_*(A):\K_*(A)\map\HP_*(A)$ for any $m$-algebra $A$. We exhibit an example below, where this $\HP_*$-valued Chern--Connes character is not an isomorphism after tensoring with the complex numbers. The problem is that the groups involved are themselves not isomorphic - the twisted $\K$-theory is nontrivial, whereas the periodic cyclic homology is trivial.
\begin{ex} \label{NTriv}
We have seen that the twisted $\K$-theory of $(SU(\infty),P)$, where $P$ is a principal $PU$-bundle on $SU(\infty)$, is the trivial group (see Example \ref{SUK}). Therefore, it is also trivial after tensoring with $\CC$ and so is periodic cyclic homology.

Consider the pair $(S^3,P^m)$, where $S^3$ denotes the $3$-sphere and $P^m$ is a principal $PU$-bundle on $S^3$, whose cohomology class is $m\in\H^3(S^3,\ZZ)\simeq\ZZ$. Let $(X,P)$ be the pair, where $X=\coprod_n X_n$ is a countable disjoint union space with $X_n=S^3$ for all $n$ and $P$ restricts to the principal $PU$-bundle $P^n$ on each $X_n$. The associated $\sC$-algebra is $\prod_n\CT(X_n,P_n)$. It is known (see \cite{RosHomInv,RosCT}) that the $\K$-theory of the continuous trace $C^*$-algebra $\CT(X_n,P^n)$ is

\beqn
\K_*(\CT(X_n,P^n)) =
\begin{cases} 0 \text{ if $*=0$,}\\
                          \ZZ/n \text{ if $*=1$}
\end{cases}
\eeqn After tensoring with $\CC$ it becomes isomorphic to the twisted periodic cyclic homology of $(X_n,P^n)$ via the twisted Chern--Connes character map. Therefore, twisted $\HP$-theory vanishes for each $(X_n,P^n)$ and since it commutes with countable products, it vanishes for $(X,P)$. However, from property \eqref{product} of (representable) $\K$-theory we conclude that the twisted $\K$-theory of the pair $(X,P)$ is

 \beqn
\K_*(\CT(X,P)) =
\begin{cases} 0 \text{ if $*=0$,}\\
                          \prod_n \ZZ/n \text{ if $*=1$,}
\end{cases}
\eeqn where clearly the twisted $\K_1$-group does not vanish after tensoring with $\CC$. Indeed, $\prod_n\ZZ/n\supset \hat{\ZZ}$, the ring of profinite integers, which gives rise to $\mathbb{A}_f$ after tensoring with $\QQ$. Here $\mathbb{A}_f$ denotes the ring of finite adeles, which is a well-known object in number theory. Therefore, the $\HP_*$-valued Chern--Connes character cannot be an isomorphism after tensoring with the complex numbers.
\end{ex}

\subsection{$\HL_*$-valued Chern--Connes character in the compact case} \label{Pusch}
The main advantage of local cyclic homology $\HL$-theory \cite{Puschnigg,PuschniggHL} is that it gives a satisfactory answer for $C^*$-algebras in the following sense: If $M$ is a smooth compact manifold then the natural inclusion $\C^\infty(M)\hookrightarrow\C(M)$ induces an $\HL$-isomorphism. In order to define the $\HP$-valued Chern--Connes character one needed to extract a suitable dense smooth subalgebra of a $C^*$-algebra or a $\sC$-algebra with the same $\K$-theory and then define the map. If one is interested in the $\HL$-valued Chern--Connes character, then one can directly work with $C^*$-algebras.

We can exploit the $\X$-complex formalism for local cyclic homology as well. For a Banach algebra $A$ one constructs an {\em analytic tensor algebra} $\T_\an(A)$ similar to $\T_\infty(A)$ as in Section \ref{HPT}, but completed with respect to a {\em bornology}. Then one defines the {\em local homology} of the $\ZZ/2$-periodic $\X$-complex $\X(\T_\an(A))$ to be the local cyclic homology groups of $A$ (see also Definition \ref{HLdefn} below). Since the construction of the more general bivariant local cyclic homology groups are discussed in Section \ref{HL} below, we do not explain the details here. Let $X$ be a compact space and let $P$ be a principal $PU$-bundle on $X$. One constructs the continuous trace $C^*$-algebra $\CT(X,P)$ as before and defines the {\em twisted local cyclic homology} groups of $(X,P)$ as $\HL_*(\CT(X,P))$. Now we show that if $X$ is, in addition, a smooth manifold, then the above definition of the twisted local cyclic homology groups will agree with those of the smooth modification in 
terms of $\C^\infty(X,\cL^1(P))$.

 \begin{thm} \label{HLcpt}
 Let $X$ be a compact smooth manifold (possibly with boundary) and $P$ be a principal $PU$-bundle on $X$. Then the canonical homomorphism $\C^\infty(\cL^1(P))\map\CT(X,P)$ induces isomorphisms $\HL_*(\C^\infty(\cL^1(P)))\cong\HL_*(\CT(X,P))$ and $\HL^*(\C^\infty(\cL^1(P)))\cong\HL^*(\CT(X,P)).$
 \end{thm}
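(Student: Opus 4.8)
The plan is to prove this as a twisted, bundle-theoretic version of Puschnigg's foundational result that $\C^\infty(M) \hookrightarrow \C(M)$ is an $\HL$-equivalence for a smooth compact manifold $M$. First I would recall that $\HL_*$ is a bifunctor on (ind-)Banach algebras that enjoys excision and is invariant under passage to "isoradial subalgebras" (or "smooth subalgebras" in Puschnigg's terminology) — continuous dense subalgebras $\cA \subseteq A$ that are closed under holomorphic functional calculus and for which every element of $\cA$ has the same spectral radius in $\cA$ and in $A$. The key input is Puschnigg's theorem: an isoradial (smooth) embedding $\cA \hookrightarrow A$ of Banach (or ind-Banach) algebras induces an isomorphism on $\HL_*$. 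So the entire proof reduces to exhibiting $\C^\infty(\cL^1(P))$ as (or as $\HL$-equivalent to) an isoradial/locally-dense subalgebra of $\CT(X,P)$, in a way that is compatible with the bornological machinery used to define $\HL$.

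The main steps: (1) Recall from Section~\ref{HPTwist} (citing \cite{MatSte}, section 4.2) that $\C^\infty(\cL^1(P))$ is a dense Fr\'echet subalgebra of $\CT(X,P)$ closed under holomorphic functional calculus, so that in particular the spectral radii agree — this is already used to conclude agreement of $\K$-theory in Example~\ref{SUHP}. (2) Observe that in the untwisted case, $P$ trivial, this is exactly Puschnigg's theorem for $\C^\infty(M) \hookrightarrow \C(M)$, possibly after the stabilization $\C^\infty(M) \otimes \cL^1 \hookrightarrow \C(M) \otimes \cpt$; here one uses $\HL$-stability (invariance under tensoring with $\cpt$, or with $\cL^1$ which is isoradially embedded in $\cpt$). (3) For the twisted case, I would cover $X$ by finitely many contractible closed sets $U_1, \dots, U_r$ (possible since $X$ is a compact manifold, possibly with boundary — take a finite good cover by closed coordinate balls) over which $P$ is trivial, and run a Mayer--Vietoris / excision induction on the number of pieces. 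On each intersection $U_{i_1} \cap \dots \cap U_{i_k}$ the bundle is trivial, so the local statement is the untwisted case of step (2); the excision property of $\HL_*$ (which holds for the kind of algebra extensions arising from restriction to closed subsets, one must check the relevant ideals are "locally linearly split" in the bornological sense) then lets one patch these together, with the five-lemma propagating the isomorphism up the Mayer--Vietoris spectral sequence. The compatibility of the $\C^\infty$-sections and continuous-sections algebras with such decompositions is the routine bookkeeping.

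The hard part will be step~(3): making the Mayer--Vietoris / excision argument work simultaneously for the two algebras of sections and checking that the maps involved are genuinely isoradial (or that the relevant subalgebra inclusions satisfy Puschnigg's hypotheses) at each stage of the induction, including on the overlaps where one is dealing with algebras of sections over non-manifold closed subsets. One delicate point is that $\HL$ excision requires the ideal in the extension $0 \to I \to B \to B/I \to 0$ to admit a bounded linear section after the analytic/bornological completion, and for section algebras restricted to a closed cover this needs a smooth (respectively continuous) partition of unity argument adapted to the bornological setting; a second delicate point is ensuring the density-plus-holomorphic-stability of $\C^\infty(\cL^1(P))$ inside $\CT(X,P)$ upgrades to the stronger \emph{isoradial} condition that Puschnigg's comparison theorem actually demands, which in the trace-class-ideal situation follows from the fact that $\cL^1$ is isoradially embedded in $\cpt$ together with the corresponding statement fibrewise. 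Alternatively, one might avoid the cover entirely by invoking Morita-type invariance: $\CT(X,P)$ and $\C(X)$ are Morita equivalent (via the bundle $P_\cpt$) in a way that should be compatible with a Morita equivalence between $\C^\infty(\cL^1(P))$ and $\C^\infty(X)$, reducing directly to the untwisted Puschnigg theorem; but verifying that the bivariant $\HL$-equivalence classes of these Morita contexts match up is itself nontrivial, so I would keep the Mayer--Vietoris approach as the primary route.
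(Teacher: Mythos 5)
Your proposal follows essentially the same route as the paper: reduce to the contractible case, where $P$ trivializes and the claim follows from Puschnigg's untwisted theorem for $\C^\infty(X)\hookrightarrow\C(X)$ together with $\cL^1$- and $C^*$-stability of $\HL_*$, and then induct over a finite good cover (geodesically convex balls in the paper) using Mayer--Vietoris and the five lemma. The only real divergence is that the paper never needs to verify isoradiality of the twisted inclusion --- your flagged ``delicate point'' --- since in the base case the commutative square comparing $\C^\infty(X)\grot\cL^1$ with $\C(X)\prot\cpt$ plus stability already yields the isomorphism, and it obtains Mayer--Vietoris formally from excision, $C^*$-stability and Higson's homotopy-invariance theorem rather than from partition-of-unity arguments.
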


 \begin{proof}
Let us first prove the isomorphism in local cyclic homology. If $X$ is contractible then $P$ is trivializable, whence $\C^\infty(\cL^1(P))\cong\C^\infty(X)\grot \cL^1$ and $\CT(X,P)\cong\C(X)\prot\cpt$. Consider the commutative diagram

 \beq
 \xymatrix{
 \C^\infty(X)\ar[r]\ar[d] & \C^\infty(X)\grot\cL^1\ar[d] \\
 \C(X)\ar[r] &\C(X)\prot \cpt ,
 }
 \eeq where the left vertical arrow induces an $\HL$-isomorphism. Since local cyclic homology is $\cL^1$-stable (see Theorem 5.65 of \cite{MeyCycHom}) and $C^*$-stable when restricted to the category of $C^*$-algebras (see Theorem 6.25 of ibid.), it follows that the top and the bottom horizontal arrows are $\HL$-isomorphisms as well. As a consequence the right vertical arrow $\C^\infty(X)\grot\cL^1\map\C(X)\prot\cpt$ is an $\HL$-isomorphism.

We claim that $\HL_*$ is a homology theory on the category of $C^*$-algebras. Indeed, thanks to excision in $\HL$-theory, the functors $\HL_*$ are split exact on the category of $C^*$-algebras. Any $C^*$-stable and split exact functor on the category of $C^*$-algebras is automatically homotopy invariant \cite{Hig2}, whence $\HL_*$ is homotopy invariant. Therefore, using Theorem 21.2.2 of \cite{Blackadar} one concludes that Mayer--Vietoris property holds for $\HL$-theory. Since $X$ is a compact smooth manifold one can choose a finite cover consisting of geodesically convex open balls $\{B_\alpha\}$ with $\overline{B_\alpha}\subset X$ compact and geodesically convex (hence contractible). The proof goes by induction on the number of subsets in the cover. The base case of induction is covered by the previous paragraph. Using the fact that finite non-empty intersection of geodesically convex subsets is again geodesically convex, one may invoke the Mayer--Vietoris sequence and the $5$-Lemma to conclude the general 
result.

\noindent
The proof for the isomorphism in local cyclic cohomology is similar and hence omitted.
 \end{proof}

 Puschnigg constructed a bivariant Chern--Connes character $\ch^P:\KK_*(A,B)\map\HL_*(A,B)$, where $A$ and $B$ are separable $C^*$-algebras. It is known that $\HL$-theory has a composition product, so that one can define an additive $\HL$-category, denoted by $\mathtt{HL}_{\mathtt{C^*}}$, with the bivariant $\HL_0$-groups as morphisms. The existence of the bivariant Chern--Connes character in degree $0$ follows from the characterization of Kasparov's bivariant $\KK$-category, denoted by $\KKcat$, as the universal $C^*$-stable and split exact functor \cite{Hig1}. Let $\Csep$ denote the category of separable $C^*$-algebras with canonical functors $\KK:\Csep\functor\KKcat$ and $\HL:\Csep\functor\mathtt{HL}_{\mathtt{C^*}}$.

 \begin{thm}[Puschnigg, Theorem 6.3 of \cite{Puschnigg}] \label{PCC} The bivariant Chern--Connes character is uniquely characterized by the following two properties:

 \begin{itemize}
 \item If $f:A\map B$ is a $*$-homomorphism of $C^*$-algebras, then $\ch^P(\KK(f))=\HL(f)$.

 \item Let $[\epsilon]\in\KK_1(A,B)$ be represented by an extension diagram $0\map B\prot\cpt \map C\map A\map 0$, admitting a completely positive contractive linear section $A\map C$. Since $\HL$-theory satisfies excision with respect to such extensions, one obtains a class $[\delta]\in\HL_1(A,B\prot\cpt)\cong\HL_1(A,B)$. Then $\ch^P([\epsilon])=[\delta]$.

 \end{itemize}
 \end{thm}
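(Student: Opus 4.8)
The plan is to extract both the existence and the uniqueness from the universal property of Kasparov's $\KK$-theory, and then to pin down the degree-one information by comparing the excision boundary maps of the two theories.

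First I would verify that $\HL_\ast$, restricted to $\Csep$, is $C^\ast$-stable and split exact: $C^\ast$-stability is among the properties of local cyclic homology recalled above, and split exactness is the special case of excision in $\HL$-theory applied to split extensions of $C^\ast$-algebras (which carry $\ast$-homomorphism sections, a fortiori bounded linear ones). By Higson's theorem \cite{Hig2} a $C^\ast$-stable split-exact functor on $\Csep$ is automatically homotopy invariant. The characterization of $\KK:\Csep\functor\KKcat$ as the universal $C^\ast$-stable and split-exact functor \cite{Hig1} then yields a unique additive functor $F:\KKcat\functor\mathtt{HL}_{\mathtt{C^*}}$ with $F\circ\KK=\HL$. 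On Hom-groups $F$ is a family of homomorphisms $\ch^P_\biv:\KK_0(A,B)\map\HL_0(A,B)$, natural and compatible with the composition product, and the first listed property, $\ch^P_\biv(\KK(f))=\HL(f)$ for $\ast$-homomorphisms $f$, holds by construction. Conversely, any product-compatible family obeying the first property furnishes a factorization of $\HL$ through $\KK$ compatible with the canonical functors from $\Csep$, so the uniqueness clause of the universal property forces it to agree with $\ch^P_\biv$ in degree $0$.

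Next I would pass to the $\ZZ/2$-graded bivariant groups. Both $\KK$ and $\HL$ satisfy Bott periodicity (for $\HL$ this follows from excision together with $\cL^1$-stability), so there are natural isomorphisms $\KK_1(A,B)\cong\KK_0(A,C_0(\RR)\prot B)$ and $\HL_1(A,B)\cong\HL_0(A,C_0(\RR)\prot B)$, and one defines $\ch^P_\biv$ on $\KK_1$ by transporting the degree-$0$ character through them. The content of the second listed property then comes out as follows: for a completely positive, contractively split extension $0\map B\prot\cpt\map C\map A\map 0$, the connecting homomorphism in the $\KK$ six-term exact sequence sends $\id_A$ to a class which, modulo the $C^\ast$-stability isomorphism $\KK_\ast(A,B\prot\cpt)\cong\KK_\ast(A,B)$, is $[\epsilon]\in\KK_1(A,B)$; since $\HL$ satisfies excision for exactly such extensions, there is a parallel boundary class $[\delta]\in\HL_1(A,B\prot\cpt)\cong\HL_1(A,B)$. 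Because $F$ is a functor it carries the mapping-cone diagram of the extension in $\KKcat$ to the corresponding one in $\mathtt{HL}_{\mathtt{C^*}}$, hence intertwines the two boundary maps, so $\ch^P_\biv([\epsilon])=[\delta]$. For uniqueness in all degrees I would then invoke the Cuntz picture $\KK(A,B)\cong\langle qA,B\prot\cpt\rangle$: every $\KK$-class is represented by a $\ast$-homomorphism out of $qA$, and $qA$ is manufactured from $A$ by extensions of precisely the type occurring in the second property, so a graded, product-compatible family satisfying the first property together with the normalization dictated by the second is completely determined.

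The step I expect to be the main obstacle is the bookkeeping in the paragraph above: one must check that $F$ matches the two suspension/Bott isomorphisms \emph{with the right normalization} — in particular that no spurious sign enters from the flip on $C_0(\RR)\prot C_0(\RR)\cong C_0(\RR^2)$ — and that $F$ genuinely carries the excision boundary homomorphism of $\KK$ to that of $\HL$. Both points rest on the fact that the excision isomorphism in $\HL$-theory for cp-split extensions is implemented compatibly with the one in $\KK$-theory, and that is where concrete input about Puschnigg's construction of $\HL_\ast$ and his excision theorem is genuinely used; the remainder is formal.
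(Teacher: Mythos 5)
The paper does not prove this statement: it is imported verbatim as Puschnigg's Theorem 6.3, with only the one-line remark beforehand that existence of the degree-zero character follows from Higson's characterization of $\KKcat$ as the universal $C^*$-stable split-exact functor. Your proposal is a correct elaboration of exactly that mechanism (stability and split exactness of $\HL_*$ via excision, automatic homotopy invariance, factorization through $\KKcat$, then degree one via Bott and the extension-boundary normalization), and you rightly flag that the only genuinely non-formal input is the compatibility of the Bott/boundary normalizations, which is where Puschnigg's actual construction must be consulted.
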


 \noindent
 Setting $A=\CC$ in the bivariant Chern--Connes character one obtains the univariant Chern--Connes character $\ch_*(A):\K_*(A)\map\HL_*(A)$.

 \begin{prop}
 Let $X$ and $P$ be as above. Then the twisted Chern--Connes character map $\ch_*(\CT(X,P)):\K_*(\CT(X,P))\map\HL_*(\CT(X,P))$ becomes an isomorphism after tensoring with $\CC$.
  \end{prop}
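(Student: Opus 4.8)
The approach I would take mirrors the Mayer--Vietoris induction already set up in the proof of Theorem \ref{HLcpt}, now applied to the Chern--Connes transformation rather than to a single homology theory. Both $\K_*(-)$ and $\HL_*(-)$ are homology theories on the category of $C^*$-algebras: for $\K$-theory this is classical, and for $\HL$-theory it was established in the proof of Theorem \ref{HLcpt} (split exactness via excision, hence homotopy invariance by \cite{Hig2}, hence Mayer--Vietoris by Theorem 21.2.2 of \cite{Blackadar}); the same chain of implications gives Mayer--Vietoris for $\K$-theory. Since $\ch_*$ is natural with respect to $*$-homomorphisms and, by Theorem \ref{PCC}, compatible with the boundary maps of extensions, it induces a morphism of the two six-term Mayer--Vietoris sequences. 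As $\CC$ is flat over $\ZZ$, tensoring these sequences with $\CC$ keeps them exact, so by the five lemma it suffices to verify the assertion on the pieces of a suitable cover.

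Concretely, I would choose a finite cover of $X$ by open geodesically convex balls whose closures are compact and geodesically convex, hence contractible, using that finite intersections of geodesically convex sets are again geodesically convex, exactly as in Theorem \ref{HLcpt}. Writing $X=\overline U\cup\overline V$ with $\overline U$ one of these balls and $\overline V$ the union of the remaining ones, the restriction $*$-homomorphisms exhibit $\CT(X,P)$ as the pullback $\CT(\overline U,P)\times_{\CT(\overline U\cap\overline V,P)}\CT(\overline V,P)$ (sections of the associated $\cpt$-bundle glue, and restriction to a closed geodesically convex subset is surjective by a Tietze-type extension), which is precisely the input needed for the two Mayer--Vietoris sequences. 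One then inducts on the number of balls in the cover, the general step being the five lemma applied to the morphism of rationalized Mayer--Vietoris sequences.

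For the base case $X$ is contractible, so $P$ is trivializable and $\CT(X,P)\cong\C(X)\prot\cpt$. By homotopy invariance and $C^*$-stability of both $\K_*$ and $\HL_*$, together with naturality of $\ch_*$, the map $\ch_*(\CT(X,P))$ is identified with $\ch_*(\CC)\colon\K_*(\CC)\to\HL_*(\CC)$. Here $\K_0(\CC)=\ZZ$ and $\K_1(\CC)=0$, while $\HL_*(\CC)$, like $\HP_*(\CC)$, is $\CC$ in even degree and $0$ in odd degree; and by the normalization in Theorem \ref{PCC} (the character of $\id_\CC$ is the identity class) the map $\ch_0(\CC)\colon\ZZ\to\CC$ carries a generator to a nonzero element, hence is an isomorphism after $\otimes\CC$, and the odd part is trivially so. This completes the induction, and with it the proof.

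I expect the main obstacle to be the careful bookkeeping around the Mayer--Vietoris sequence in $\K$-theory for the continuous trace algebras $\CT(-,P)$ over the cover --- establishing the relevant pullback squares and the surjectivity of the restriction maps for sections of the $\cpt$-bundle over closed geodesically convex subsets --- and, relatedly, confirming that Puschnigg's bivariant character respects the connecting homomorphisms of these Mayer--Vietoris sequences, which via Theorem \ref{PCC} reduces to its compatibility with extension boundary maps, one of its two characterizing properties. An essentially equivalent alternative would first transport the statement to the smooth twisted algebra, using Theorem \ref{HLcpt} on the $\HL$ side and holomorphic functional calculus invariance of $\K$-theory on the other, and then combine Mathai--Stevenson's Proposition 6.1 (the $\HP_*$-valued twisted Chern--Connes character is a rational isomorphism in the compact case) with the coincidence of $\HP_*$ and $\HL_*$ on $\C^\infty(\cL^1(P))$; the latter coincidence itself being proved by the same geodesically convex Mayer--Vietoris reduction, using $\cL^1$-stability of both theories and the contractible case where everything collapses to $\HP_*(\CC)$.
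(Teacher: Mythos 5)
Your argument is correct in substance, but it is a genuinely different (and much longer) route than the one the paper takes. The paper's proof is a one-step appeal to general machinery: $\CT(X,P)$ is a type $I$ $C^*$-algebra (Theorem 6.1.11 of \cite{Pedersen}), hence separable, nuclear and in the bootstrap class satisfying the UCT \cite{RosSch}, and Theorem 7.7 of \cite{MeyCycHom} asserts that the local cyclic Chern--Connes character is an isomorphism after tensoring with $\CC$ for every $C^*$-algebra in the bootstrap category. That argument needs no manifold structure at all --- it works for any compact metrizable $X$ --- whereas your Mayer--Vietoris induction over a geodesically convex cover only makes sense when $X$ is a compact smooth manifold; since the ambient subsection states its hypotheses for a compact \emph{space}, your proof establishes a strictly narrower statement (though it does cover the cases the paper actually uses). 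What your approach buys is self-containedness: it recycles the homology-theory properties of $\HL_*$ already established in the proof of Theorem \ref{HLcpt}, uses only the two characterizing properties of Theorem \ref{PCC} to get compatibility with boundary maps, and avoids invoking the UCT/bootstrap formalism; the base case $\CT(\overline{U},P)\cong\C(\overline{U})\prot\cpt$ reducing to $\ch_*(\CC)\colon\ZZ\map\CC$ is handled correctly. The remaining bookkeeping you flag (the pullback presentation of $\CT(X,P)$ over a closed cover and surjectivity of restriction of sections, which follows from local triviality and a partition of unity) is routine for these nuclear algebras, where all the relevant extensions are automatically semi-split by Choi--Effros. Your suggested alternative via Mathai--Stevenson's rational isomorphism combined with Theorem \ref{HLcpt} and the agreement of $\K$-theory under holomorphic functional calculus is also viable and closer in spirit to Example \ref{SUHP}, but again only in the smooth case.
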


\begin{proof}
The assertion follows from Theorem 7.7 of \cite{MeyCycHom}, since the $C^*$-algebra $\CT(X,P)$ belongs to the bootstrap category, which can be defined as the category of separable $C^*$-algebras satisfying the universal coefficient theorem (UCT). Indeed, $\CT(X,P)$ is a type $I$ $C^*$-algebra (see, for instance, Theorem 6.1.11. of \cite{Pedersen}) and such algebras satisfy UCT \cite{RosSch}.
  \end{proof}

\begin{cor}
The twisted cohomology (with complex coefficients) of the pair $(X,P)$ is isomorphic to $\HL_*(\CT(X,P))$.
\end{cor}

\begin{proof}
There is a chain of isomorphisms $$\HL_*(\CT(X,P))\cong\K_*(\CT(X,P))\otimes\CC\cong\K_*(\C^\infty(\cL^1(P)))\otimes\CC\cong\HP_*(\C^\infty(\cL^1(P))),$$ where the last identification was shown in Proposition 6.1. of \cite{MatSte}. Moreover, $\HP_*(\C^\infty(\cL^1(P)))$ was identified with the twisted cohomology of $(X,P)$ in Proposition 6.3. of ibid..
\end{proof}

 \section{Twisted $\K$-homology via separable $\sC$-algebras} \label{KHom}
 Atiyah--Hirzebruch complex $\K$-theory has a dual theory, which is intimately connected to index theory. It is called $\K$-homology theory and in noncommutative geometry its analytic version is seen as a special case of Kasparov's bivariant $\K$-theory \cite{KasKK1,KasKK2}, viz., for any separable $C^*$-algebra $A$ the analytic $\K$-homology is defined to be $\K^*(A)=\KK_*(A,\CC)$. It is a {\em $\sigma$-additive cohomology theory} on the category of separable and nuclear $C^*$-algebras (see \cite{KasNovikov,RosSch}). The class of separable $C^*$-algebras is not too restrictive; separability imposes a metrizability condition on the spectrum of a commutative $C^*$-algebra. 

We need an extension of Kasparov's bivariant $\K$-theory to the category of sepable $\sC$-algebras. The bivariant $\K$-theory for locally convex algebras, described in Section \ref{CKTh}, does not agree with Kasparov's theory, when restricted to the category of separable $C^*$-algebras. The problem arises in the $\K$-homology part. In keeping with the exposition so far, we present a modified bivariant $\K$-theory for separable $\sC$-algebras, suggested by Cuntz \cite{CunGenBivK}, which is a generalization of Kasparov's bivariant $\K$-theory.

Let $\cU$ be a full subcategory of locally convex $*$-algebras, which satisfies the following set of axioms:

\begin{enumerate}

\item $\kh$ For each $A\in\cU$ there is a functorial cylinder algebra $A[0,1]\in\cU$ with two natural continuous evaluation homomorphisms $A[0,1]\map A$ ($\ev_0$ and $\ev_1$). One can formulate the notion of a homotopy between two morphisms in $\cU$ as follows: Two morphisms $f,g:A\map B$ in $\cU$ are said to be {\em homotopic} if there exists a morphism $h: A\map B[0,1]$ in $\cU$, such that $f=\ev_0\circ h$ and $g=\ev_1\circ h$.

\item $\ks$ For each $A\in\cU$ there is a functorial stabilized algebra $\cpt(A)$ containing $M_\infty(A)$ and a corner embedding $\iota: A\map \cpt(A)$, such that the canonical map $\cpt(\iota):\cpt(A)\map\cpt(\cpt(A))$ is homotopic to a specific isomorphism $\cpt(A)\cong\cpt(\cpt(A))$. Note that a homotopy between two morphisms is defined using the previous axiom $\kh$.

\item $\knew$ There is fixed choice $\cL$ of a subclass of all continuous linear morphisms between the objects of $\cU$, which contains all the morphisms of $\cU$,  such that for any $A\in\cU$ there is a fixed map $s: A\map TA$ in $\cL$ with the property that given any other map $\alpha: A\map B$ belonging to $\cL$, there exists a unique morphism $\beta: TA\map B$ in $\cU$ satisfying $\alpha=\beta\circ s$.

\item $\ke$ There is a distinguished class $\fC$ of extensions in $\cU$, which are split by a map in $\cL$. We only require the existence of such a splitting; the choice of a splitting is not a part of the data. Now for each $A\in\cU$ the following extensions must be in $\fC$.

\begin{itemize}
\item a functorial cone-suspension extension: $0\map A(0,1)\map A(0,1]\map A\map 0$. Here the suspension $A(0,1)$ and the cone $A(0,1]$ have the obvious definitions in terms of $\ev_0$, $\ev_1$ and the cylinder $A[0,1]$.

\item a functorial (reduced) Toeplitz extension: $0\map\cpt(A)\map\fT(A)\map A(0,1)\map 0$.

\item a (homotopy) universal extension: The map $A\overset{\id}{\map} A$ produces a surjective algebra homomorphism $TA\map A$ in $\cU$ by $\knew$. We require the extension $$0\map JA\map TA\map A\map 0$$ to be in $\fC$, where $JA:=\ker(TA\map A)$, with its canonical splitting $s: A\map TA$ in $\cL$. Given any extension $0\map I\map E\map A\map 0$ in $\fC$, there is a (not necessarily unique) morphism of extensions

\beqn
\xymatrix{
0\ar[r] & JA\ar[r]\ar[d]^{\epsilon_A} & TA\ar[r]\ar[d] & A\ar[r]\ar[d]^\id & 0\\
0\ar[r] & I \ar[r] & E \ar[r] & A\ar[r] & 0,}
\eeqn which is obtained by choosing a splitting $A\map E$ in $\cL$. The morphism $\epsilon_A$ is called a {\em classifying map} of the extension $0\map I\map E\map A\map 0$ and it is required to be unique up to a homotopy.
\end{itemize}

\item $\kt$ There is an associative tensor product $\otimes$ on $\cU$, such that tensoring with any object in $\cU$ preserves the extensions in $\fC$.
\end{enumerate}

\begin{rem}
Our set of axioms is a bit more restrictive than that of Cuntz in \cite{CunGenBivK}. For instance, the axiom $\knew$ does not appear in ibid.. It produces a stronger version of the (homotopy) universal extension $0\map JA\map TA\map A\map 0$, which will be functorial in $A$. The axiom $\ks$, which is needed to make sense of stability, has also been strengthened. However, in the example, that we are going to consider, this stronger set of axioms will be satisfied. We have left out a predictable `subdivision of the unit interval condition' that is needed to ensure that homotopy is an equivalence relation.
\end{rem}

\noindent
The choice of the class of extensions $\fC$ in the axiom $\ke$ determines the behaviour of the bivariant theory, particularly its excisive properties. Splicing the cone-suspension extension with the (reduced) Toeplitz extension one gets a $2$-step extension diagram:

$$0\map\cpt(A)\map \fT(A)\map A(0,1]\map A\map 0.$$ Using the property of the universal extension (twice) one gets a {\em classifying map} $\epsilon_A: J^2(A)\map\cpt(A)$ of the $2$-step extension. Let $[A,B]$ denote the homotopy classes of homomorphisms between $A$ and $B$. Then $\epsilon_A$ defines a map $S:[J^k(A),\cpt(B)]\map [J^{k+2}(A),\cpt(B)]$, which takes any homotopy class $[\alpha]\in [J^k(A),\cpt(B)]$ to the homotopy class of the map $$J^{k+2}(A)=J^2(J^k(A))\overset{\epsilon_{J^k(A)}}{\functor} \cpt(J^k(A))\overset{\cpt (\alpha)}{\functor}\cpt(\cpt(B))\cong\cpt(B).$$ For $i=0,1$, one now defines the bivariant $\K$-theory groups as

\beq \label{kk}
\kk_i(A,B)=\dlim_n[J^{2n+i}(A),\cpt(B)],
\eeq where $[-,-]$ denotes the homotopy classes of morphisms and the direct limit is taken over the maps $S$ described above. There is a natural abelian group structure on $\kk_i(A,B)$, which is contravariantly functorial in $A$ and covariantly functorial in $B$. Furthermore, there is an associative bilinear composition product $\kk_i(A,B)\times\kk_j(B,C)\map\kk_{i+j}(A,C)$, which enables us to construct an additive category $\kk(\cU)$, whose objects are those of $\cU$ and morphisms spaces are the $\kk_0$-groups. For all the technical details concerning these assertions we refer the readers to the original articles of Cuntz \cite{CunGenBivK,CunBivCh}. We only recall the universal characterization of this bivariant $\K$-theory that will be needed in the sequel.

\begin{thm}[Cuntz, Proposition 1.2 of \cite{CunGenBivK}] \label{UP}
There is canonical functor $\kk_0:\cU\functor\kk(\cU)$, which is identity on objects, satisfying the following properties:

\begin{itemize}
\item $(E1)$ the evaluation homomorphisms $\ev_i:A[0,1]\map A$ are mapped to isomorphisms, i.e., $\kk_0(\ev_i)$ is an isomorphism for $i=0$ and $1$,

\item $(E2)$ the corner embedding $\iota: A\map\cpt(A)$ is mapped to an isomorphism, i.e., $\kk_0(\iota)$ is an isomorphism,

\item $(E3)$ for any extension $0\map I\map A\map B\map 0$ in $\fC$, and any $D\in\cU$, there is a six-term exact sequence

\beqn
\xymatrix{
\kk_0(D,I)\ar[r] & \kk_0(D,A)\ar[r] & \kk_0(D,B)\ar[d]\\
\kk_0(D,B(0,1)) \ar[u] & \kk_0(D,A(0,1))\ar[l] & \kk_0(D,I(0,1)) \ar[l]
}\eeqn and, similarly, a six-term exact sequence for the functor $\kk_0(-,D)$ with the arrows reversed. Furthermore, it is the universal functor in the following sense: Let $F:\cU\map\cC$ be any covariant additive category valued functor, so that $F(f\circ g)=F(f)\cdot F(g)$, and such that for all $D\in\cU$ the functors $\Hom_\cC(F(-),F(D))$ and $\Hom_\cC(F(D),F(-))$ satisfy the properties $(E1)$, $(E2)$ and $(E3)$. Then there is a unique covariant functor $F':\kk(\cU)\functor\cC$, such that $F= F'\circ\kk_0$.
\end{itemize}
\end{thm}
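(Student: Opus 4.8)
The plan is to follow the standard route for the universal property of a bivariant theory obtained by formally inverting a distinguished class of extensions, as in Higson's treatment of $\KKcat$ and Cuntz's general machinery. Two things must be established: that $\kk_0$ itself satisfies $(E1)$--$(E3)$, and that it is initial among the multiplicative additive functors whose associated $\Hom$-functors satisfy these properties. Property $(E1)$ is essentially built into the construction, since $\kk_i(A,B)=\dlim_n[J^{2n+i}(A),\cpt(B)]$ is a colimit of \emph{homotopy} classes of morphisms of $\cU$; thus homotopic homomorphisms induce the same map on $\kk$, and the functorial cylinder $A[0,1]$ of axiom $\kh$ makes the inclusion of constant functions a two-sided homotopy inverse of each $\ev_i$, so $\kk_0(\ev_i)$ is invertible. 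For $(E2)$ one feeds axiom $\ks$ into the definition: $\cpt(\iota)$ is homotopic to an isomorphism, so after applying $J^{2n}(-)$ and passing to the colimit the map $\kk_0(\iota)$ acquires a two-sided inverse.

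Property $(E3)$ is the technical heart. Given an extension $0\to I\to E\to A\to 0$ in $\fC$ with a section in $\cL$, axiom $\ke$ furnishes a classifying map $\epsilon\colon JA\to I$, unique up to homotopy, hence a well-defined class $\kk_0(\epsilon)$. One first checks, by an Eilenberg swindle using that $\cpt(-)$ absorbs countable amplification, that the cone $A(0,1]$ and the Toeplitz algebra $\fT(A)$ are zero objects of $\kk(\cU)$; combined with the functorial cone-suspension and reduced Toeplitz extensions, this identifies in $\kk(\cU)$ the algebra $JA$ with the suspension $I(0,1)$ up to the relevant index shift, and produces the boundary morphism occurring in $(E3)$. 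Exactness at each of the six spots is then a Puppe/mapping-cone argument: one shows that every morphism of $\kk(\cU)$ sits in an exact mapping-cone sequence and that the split extensions of $\fC$ are carried to such sequences; throughout, one must verify that the constructions are independent of the $\cL$-splittings, whose existence, but not choice, is assumed, up to homotopy. The symmetric six-term sequence for $\kk_0(-,D)$ then follows formally from contravariance in the first variable.

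For universality, let $F\colon\cU\to\cC$ be covariant, multiplicative and additive, with $\Hom_\cC(F(-),F(D))$ and $\Hom_\cC(F(D),F(-))$ satisfying $(E1)$--$(E3)$ for every $D$. These hypotheses force $F$ to invert the classifying map $\epsilon_A\colon J^2A\to\cpt(A)$ of the spliced $2$-step extension $0\to\cpt(A)\to\fT(A)\to A(0,1]\to A\to 0$: running the six-term sequences of its cone-suspension and Toeplitz constituents through the $\Hom$-functors and using $(E1)$ and $(E2)$ exhibits $F(\epsilon_A)$ as a composite of boundary isomorphisms. One then sets $F'(A)=F(A)$ on objects and, for a morphism of $\kk(\cU)$ represented by a homotopy class $[\alpha]$ with $\alpha\colon J^{2n}(A)\to\cpt(B)$,
\[
F'([\alpha]) \;=\; F(\iota_B)^{-1}\circ F(\alpha)\circ F\bigl(\epsilon_A^{(n)}\bigr)^{-1}\colon F(A)\to F(B),
\]
where $\epsilon_A^{(n)}$ is the $n$-fold iterate of $\epsilon_A$, identifying $J^{2n}A$ with $A$ via the iterated stability isomorphisms, so that $F(\epsilon_A^{(n)})$ and $F(\iota_B)$ are invertible in $\cC$. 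Homotopy invariance of $F$ makes $F'([\alpha])$ independent of the representative, naturality of the classifying maps makes it compatible with the colimit structure maps $S$, associativity of the $\kk$-composition together with the compatibility of $\epsilon$ with composition gives multiplicativity of $F'$, and $F'\circ\kk_0=F$ on homomorphisms by construction. Uniqueness is automatic: $\kk_0$ is the identity on objects, and every morphism of $\kk(\cU)$ is a composite of classes of honest homomorphisms with the formal inverses $\kk_0(\epsilon_A)^{-1}$ and $\kk_0(\iota_B)^{-1}$, on all of which $F'$ is prescribed.

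I expect the main obstacle to be property $(E3)$: producing the six-term exact sequences purely from the axioms on $\fC$, $J$ and $\cpt(-)$---well-definedness of the boundary map, exactness at all six positions, triviality of the cone and Toeplitz algebras via the swindle, and the up-to-homotopy compatibility of $J$ with the cone and Toeplitz extensions---is where all the real work lies, whereas $(E1)$, $(E2)$ and the universal factorization reduce to bookkeeping with homotopy classes, colimits and the composition product.
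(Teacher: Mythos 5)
The paper does not prove this statement at all: it is imported verbatim as Cuntz's Proposition 1.2 of \cite{CunGenBivK}, and the author explicitly writes ``For all the technical details we refer the readers to the original articles of Cuntz.'' So there is no in-paper argument to compare yours against; what can be said is that your outline reproduces the standard Cuntz architecture correctly --- $(E1)$ from the definition of $\kk$ via homotopy classes, $(E2)$ from axiom $\ks$, $(E3)$ via classifying maps, triviality of the cone and of the reduced Toeplitz algebra, and Puppe sequences, and universality by showing that the hypotheses force $F$ to invert $\epsilon_A\colon J^2A\to\cpt(A)$ and then defining $F'$ by the formula you give. This is indeed how the factorization $F=F'\circ\kk_0$ is obtained, and your uniqueness argument (every $\kk$-morphism is generated by honest homomorphisms together with the formal inverses of $\kk_0(\epsilon_A)$ and $\kk_0(\iota)$) is the right one.

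Two cautions. First, your treatment of $(E1)$ and of homotopy invariance of $F$ quietly uses a constant-function section $A\to A[0,1]$ splitting both evaluations; axiom $\kh$ as stated in this paper only posits the cylinder and the two evaluations, so this section (or some equivalent datum forcing $F(\ev_0)=F(\ev_1)$) has to be supplied --- it is present in the concrete realization $A[0,1]=\C([0,1])\prot A$ and in Cuntz's own axioms, but it is not free. Second, in your sketch of the boundary map you say the argument ``identifies $JA$ with the suspension $I(0,1)$''; what the cone--suspension extension actually gives is an invertible class identifying $JA$ with $A(0,1)$ in $\kk(\cU)$, after which the classifying map $JA\to I$ of the given extension becomes the boundary class in $\kk_1(A,I)$. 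More globally, your proposal is a roadmap rather than a proof: the entire analytic content of $(E3)$ (well-definedness of the boundary independent of the $\cL$-splitting, exactness at all six positions, the swindle for the Toeplitz algebra) is named but not carried out, exactly as you acknowledge, and that is also precisely the material this paper delegates to \cite{CunGenBivK}.
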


\begin{rem}
One actually needs to also assume that $F(\Hom_\cU(\fT(A),\fT(A)))=\{0\}$ for all $A\in\cU$ in the above assertion; this will be automatically satisfied in our applications below.
\end{rem}

\begin{rem} \label{E3form}
If $\cU$ is the category of (separable) $C^*$-algebras (resp. $\sC$-algebras) and $\fC$ consists of all (separable) $C^*$-algebra (resp. $\sC$-algebra) extensions, admitting a completely positive contractive linear section, then the property $(E3)$ above can be relaxed to

\begin{itemize}
\item $(E3')$ for any extension $0\map I\map A\map B\map 0$ in $\fC$, and any $D\in\cU$, the sequences $F(D,I)\map F(D,A)\map F(D,B)$ and $F(B,D)\map F(A,D)\map F(I,D)$ are exact.
\end{itemize} This is because standard arguments from the theory of operator algebras, which also works for $\sC$-algebras (see, for instance, Corollary 2.5. of \cite{PhiRepK}), enable us to extend the short exact sequence $F(D,I)\map F(D,A)\map F(D,B)$ to a long exact sequence (using $(E1)$)

\beqn
\cdots\map F(D,B(0,1)^2)&\map& F(D,I(0,1))\map F(D,A(0,1))\map\\
&\map& F(D,B(0,1))\map F(D,I)\map F(D,A)\map F(D,B)
\eeqn and Cuntz's proof of Bott periodicity in the presence of axioms $(E1)$, $(E2)$ and $(E3')$ applies, whence one gets the desired six-term exact sequence of property $(E3)$. Exactly similar arguments hold for the contravariant functor $F(-,D)$. This equivalent formulation of property $(E3)$ will be useful for us later.
\end{rem}

A $\sC$-algebra $A\cong\ilim_{n\in\NN} A_n$ is called {\em separable} if each $A_n$ is a separable $C^*$-algebra. For our purposes we are going to set $\cU$ to be the category of separable $\sC$-algebras $\sCalg$. Let $A\cong\ilim_n A_n$ and $B\cong\ilim_m B_m$ be two $\sC$-algebras. Then the maximal tensor product is defined as $A\prot_{\textup{max}} B=\ilim_{n} A_n\prot_{\textup{max}} B_n$. Henceforth, we only consider the maximal tensor product and, for brevity, write simply $\prot$ instead of $\prot_{\textup{max}}$. In particular, if $A$ is a $\sC$-algebra, then the cylinder, the cone, the suspension, and the stabilization have predictable choices, viz., $A[0,1]=\C([0,1])\prot A$, $A(0,1] =\C_0((0,1])\prot A$, $A(0,1)=\C_0((0,1))\prot A$, and $\cpt(A)=\cpt\prot A$ respectively. If $A$ is a $\sC$-algebra and $I$ is a closed two-sided $*$-ideal then $A/I$ is automatically a $\sC$-algebra and any $*$-homomorphism $A\map C$ of $\sC$-algebras that vanishes on $I$ factors through $A/I$, i.e., $A/I$ is a 
categorical quotient. If $A$ were a pro $C^*$-algebra, then $A/I$ might fail to exist as a pro $C^*$-algebra. A sequence of $\sC$-algebras and $*$-homomorphisms $0\map I\map A\map B\map 0$ is called {\em exact} if it is algebraically exact, the map $I\map A$ has a closed range and is a homeomorphism onto its image, and the induced map $A/I\map B$ is also a homeomorphism. It turns out that the topological conditions are redundant and such a sequence is exact if and only if it is algebraically exact (see Corollary 5.5 of \cite{NCP1}). It is known that $$0\map \ilim_n I_n\cong I\map \ilim_n E_n\cong E\map \ilim_n A_n\cong A\map 0$$ is a $\sC$-algebra extension if and only if $\{0\map I_n\map E_n\map A_n\map 0\}$ is an inverse system of $C^*$-algebra extensions (see Proposition 5.3 (2) of \cite{NCP1}).

Now let $\cU$ be the category of separable $\sC$-algebras. One observes at once that there is an immediate candidate for the object $A[0,1]$ (resp. $\cpt(A)$), viz., $\C([0,1])\prot A$ (resp. $\cpt\prot A$) with the obvious evaluation homomorphisms (resp. corner embedding). These constructions are clearly functorial and hence $\cU$ satisfies axioms $\kh$ and $\ks$. We choose the distinguished class of extensions $\fC$ to be all extensions $0\map I\map E\map A\map 0$ in $\cU$, i.e., exact sequences of $\sC$-algebras, admitting a completely positive contractive linear section, i.e., $\cL$ is the subclass of all completely positive contractive linear maps. For the extension preserving tensor product, that is required in the axiom $\kt$, we choose the maximal tensor product $\prot$. The cone-suspension extension and the (reduced) Toeplitz extension clearly belong to $\fC$. For any $\sC$-algebra $A$, the (reduced) Toeplitz extension is obtained by applying the functor $-\prot A$ to the extension $0\map\cpt\map\fT_
0\map \C_0(S^1\setminus\{1\})\cong\C_0((0,1))\map 0$, i.e., $\fT(A)=\fT_0\prot A$. Let us now construct a universal extension and verify $\knew$.

\begin{prop}
For any $A\in\cU$ there is a map $A\map TA$ in $\cL$ as demanded in $\knew$ and there is a universal extension $0\map J(A)\map T(A)\map A\map 0$ satisfying the requirements of axiom $\ke$.
\end{prop}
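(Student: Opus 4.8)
The plan is to manufacture $T(A)$ from the $C^*$-level tensor-algebra functor by passing to a countable inverse limit, mirroring the way $\sC$-algebras themselves are assembled. Recall from \cite{CunGenBivK} that on separable $C^*$-algebras there is a functor $B\mapsto T(B)$ equipped with a natural completely positive contractive (cpc) linear map $s_B:B\map T(B)$ and a natural surjection $\pi_B:T(B)\map B$ with $\pi_B\circ s_B=\id_B$, such that every cpc linear map from $B$ into a $C^*$-algebra $C$ factors uniquely through $s_B$ by a $*$-homomorphism; moreover $T(B)$ is separable whenever $B$ is, since its algebraic tensor algebra $\bigoplus_{k\geqslant 1}B^{\otimes k}$ is separable and a $C^*$-completion of a separable $*$-algebra is separable. (This is the completely positive analogue, for $C^*$-algebras, of the locally convex tensor algebra $\hT(-)$ recalled in Section \ref{locConK}.) Now write $A\cong\ilim_n A_n\in\sCalg$ with surjective connecting $*$-homomorphisms. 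Functoriality gives a countable inverse system $\{T(A_n)\}$ of separable $C^*$-algebras; one may replace each $T(A_n)$ by the intersection of the images of $T(A_m)$, $m\geqslant n$, without changing the limit and, using the naturality of $s$ and $\pi$, without destroying the maps $s_{A_n}$ or the surjectivity of $\pi_{A_n}$. Set $T(A):=\ilim_n T(A_n)$ — a separable $\sC$-algebra — together with $s_A:=\ilim_n s_{A_n}:A\map T(A)$ and $\pi_A:=\ilim_n\pi_{A_n}:T(A)\map A$. Then $\pi_A\circ s_A=\id_A$ and $s_A$ is cpc, so it is the map in $\cL$ demanded by $\knew$.

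Next I would check the universal property of $\knew$. Let $\alpha:A\map B$ lie in $\cL$, with $B\cong\ilim_m B_m$. For each $m$ the composite of $\alpha$ with the canonical projection $B\map B_m$ is a cpc linear map into a $C^*$-algebra; being contractive it is continuous, hence factors through the $C^*$-quotient $A_{n(m)}$ for some index $n(m)$, which we may take non-decreasing in $m$. This yields a cpc linear map $A_{n(m)}\map B_m$ and therefore, by the $C^*$-level universal property, a unique $*$-homomorphism $\beta_m:T(A_{n(m)})\map B_m$. A diagram chase using the naturality of $s_{(-)}$ and the uniqueness clause shows that the $\beta_m$ are compatible with the connecting maps of $\{T(A_n)\}$ and $\{B_m\}$, so they assemble into an algebra homomorphism $\beta:T(A)\map B$ with $\beta\circ s_A=\alpha$; uniqueness of $\beta$ is clear because $s_A(A)$ generates $T(A)$ densely as an algebra.

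Finally I would treat the universal extension. Put $J(A):=\ker\pi_A=\ilim_n\ker\pi_{A_n}$, so that $0\map J(A)\map T(A)\overset{\pi_A}{\map}A\map 0$ is the inverse limit of the $C^*$-algebra extensions $0\map\ker\pi_{A_n}\map T(A_n)\overset{\pi_{A_n}}{\map}A_n\map 0$ and hence an exact sequence of $\sC$-algebras by Proposition~5.3(2) of \cite{NCP1}; it lies in $\fC$ because $s_A$ is a cpc linear section. For the classifying-map requirement, take any extension $0\map I\map E\overset{q}{\map}A\map 0$ in $\fC$ and choose a cpc linear section $\sigma:A\map E$; it belongs to $\cL$, so $\knew$ produces an algebra homomorphism $\beta_\sigma:T(A)\map E$ with $\beta_\sigma\circ s_A=\sigma$. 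As $q\circ\beta_\sigma$ and $\pi_A$ are algebra homomorphisms agreeing on the generating set $s_A(A)$, they coincide, whence $\beta_\sigma(J(A))\subseteq\ker q=I$ and $\epsilon_A:=\beta_\sigma|_{J(A)}:J(A)\map I$ is a classifying map. For uniqueness up to homotopy one observes that the cpc linear sections of $q$ form a convex set, so two of them, $\sigma_0$ and $\sigma_1$, are joined by the affine homotopy $\sigma_t=(1-t)\sigma_0+t\sigma_1$, a morphism $A\map E[0,1]$ in $\cL$; applying $\knew$ to it and restricting to $J(A)$ gives a homotopy between the two classifying maps.

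The step I expect to be the main obstacle is the factorization in the second paragraph — that a completely positive contractive linear map from a separable $\sC$-algebra into a $C^*$-algebra is continuous and factors through one of the defining $C^*$-quotients, and that the induced $*$-homomorphisms $\beta_m$ are genuinely coherent — so that no $\ilim^1$-type defect obstructs the passage to the inverse limit. This is a standard but not entirely automatic point in the $\sC$-algebra setting, in the spirit of Corollary~2.5 of \cite{PhiRepK}; everything else is bookkeeping with Phillips' results on inverse limits of $C^*$-algebras and Cuntz's $C^*$-level tensor algebra.
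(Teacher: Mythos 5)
Your proof is correct and follows essentially the same route as the paper: both reduce to the $C^*$-level universal completely positive contractive tensor algebra $T_{\cpc}$ of Cuntz--Meyer--Rosenberg, assemble $T(A)$ as a countable inverse limit of the $T(A_n)$, and obtain uniqueness of classifying maps from the affine homotopy between two completely positive contractive sections. The only notable divergences are that the paper invokes Satz 5.3.6 of \cite{Bonkat} to represent a completely positive contractive map of $\sC$-algebras as a morphism of inverse systems (precisely the step you flag as the main obstacle, which you instead argue by hand), and that it builds the classifying map levelwise from $C^*$-level classifying maps and passes to inverse limits, rather than, as you do, applying the already-established universal property of $T(A)$ directly to a section $A\map E$.
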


\begin{proof}
Let us first suppose that $A$ is a separable $C^*$-algebra. Let $\cpc(A)$ denote a category, whose objects are completely positive contractive linear maps $A\map B$, where $B$ is a separable $C^*$-algebra. A morphism $(A\map B)\map (A\map C)$ is a $*$-homomorphisms $B\map C$ such that the following diagram commutes:

\beqn
\xymatrix{
A\ar[r]\ar[dr] & B\ar[d] \\
& C
}
\eeqn This category is non-empty as $A\overset{\id}{\map} A$ belongs to it. It also has an initial object, whose construction is explained after Definition 8.25. in \cite{CunMeyRos}. We call this initial object $s: A\map T(A)$. Note that in ibid. it is constructed in the category of all $C^*$-algebras and it is denoted by $\sigma_A: A\map T_\cpc A$. However, if $A$ is separable, then so is $T(A)$.

Now let $A$ be any separable $\sC$-algebra and, as usual, write $A\cong \ilim_n A_n$ with surjective connecting homomorphisms $\theta_n: A_n\map A_{n-1}$. For each separable $C^*$-algebra $A_n$ there is a universal extension $0\map J(A_n)\map T(A_n)\map A_n\map 0$, with a canonical completely positive contractive linear splitting $s_n: A_n\map T(A_n)$. The completely positive contractive linear map $s_{n-1}\circ\theta_n : A_n\map T(A_{n-1})$ induces a $*$-homomorphism $\tau_n: T(A_n)\map T(A_{n-1})$ such that the following diagram commutes

\beqn
\xymatrix{
T(A_n) \ar[d]_{\tau_n}\ar[r]^{\pi_n} & A_n\ar[d]^{\theta_n} \ar@/_2pc/[l]_{s_n}\ar[d] \\
T(A_{n-1}) \ar[r]^{\pi_{n-1}}& A_{n-1}\ar@/_2pc/[l]_{s_{n-1}}.
}
\eeqn which gives rise to a morphism of extensions

\beqn
\xymatrix{
0\ar[r] & J(A_n)\ar[r]\ar[d] & T(A_n)\ar[r]\ar[d]^{\tau_n} & A_n\ar[d]^{\theta_n} \ar[r] & 0\\
0\ar[r] & J(A_{n-1}) \ar[r] & T(A_{n-1})\ar[r] & A_{n-1}\ar[r] & 0.
}\eeqn The inverse limit of these extensions produces an extension $$0\map J(A)=\ilim_n J(A_n) \map T(A)=\ilim_n T(A_n)\map A\map 0,$$ which we claim is a universal extension. The completely positive contractive linear splitting is given by the map $s=\ilim_n s_n: A\cong \ilim_n A_n\map\ilim_n T(A_n)= T(A)$. Let $\alpha: A\map B\cong\ilim_n B_n$ be any completely positive contractive linear map between separable $\sC$-algebras. Using Satz 5.3.6 of \cite{Bonkat} we write $\alpha$ as a morphism of inverse systems $\{\alpha_n: A_n\map B_n\}$, where each $\alpha_n$ is a completely positive contractive linear map between separable $C^*$-algebras. They induce a morphism of inverse systems $\{T(\alpha_n):T(A_n)\map B_n\}$, which produces the unique $*$-homomorphism $T(A)\cong\ilim_n T(A_n)\map\ilim_n B_n\cong B$ with the desired properties. This verifies $\knew$.

Given any extension $0\map I\map E\map A\map 0$ of $\sC$-algebras, admitting a completely positive contractive linear splitting, we write it as an inverse limit of extensions of $C^*$-algebras $0\map I_n\map E_n\map A_n\map 0$ admitting a completely positive linear splitting for each $n$. Note that if there is a completely positive linear splitting of a surjective $*$-homomorphism between separable $C^*$-algebras, then there is also a completely positive and contractive linear splitting (see Remark 2.5. of \cite{CunSka}). Since, for each $n$, $0\map J(A_n)\map T(A_n)\map A_n\map 0$ is a universal extension of $A_n$ there is a morphism of extensions

\beqn
\xymatrix{
0\ar[r] & J(A_n)\ar[r]\ar[d] & T(A_n)\ar[r]\ar[d] & A_n\ar[d]^{\id} \ar[r] & 0\\
0\ar[r] & I_n \ar[r] & E_n\ar[r] & A_{n}\ar[r] & 0
}\eeqn which gives rise to a morphism of inverse systems of extensions. Consequently, there is a morphism between their inverse limits

\beqn
\xymatrix{
0\ar[r] & J(A)\ar[r]\ar[d]^{\epsilon_A} & T(A)\ar[r]\ar[d] & A\ar[d]^{\id} \ar[r] & 0\\
0\ar[r] & I \ar[r] & E\ar[r] & A\ar[r] & 0
}\eeqn The classifying map is uniquely determined by the choice of a completely positive contractive linear map $s: A\map E$. If $s,s'$ are two such linear maps then the linear homotopy $ts +(1-t)s'$ induces a homotopy between their corresponding classifying maps, making the classifying map unique up to a homotopy.
\end{proof}

\begin{defn} \label{newKK}
We define the bivariant $\K$-theory groups on the category of separable $\sC$-algebras as in Equation \eqref{kk}, i.e., $\skk_*(A,B):=\kk_*(A,B)$, where $\cU$ is the category of separable $\sC$-algebras and $\fC$ consists of all separable $C^*$-algebra extensions, admitting a completely positive linear splitting.
\end{defn}

\begin{rem} \label{sepKK}
Restricted to the category of separable $C^*$-algebras, these groups agree with Kasparov's $\KK$-groups. By construction, this theory enjoys the universal property that is described in Theorem \ref{UP}.
\end{rem}

\begin{rem}
The bivariant $\skk$-theory for separable $\sC$-algebras could have also been constructed by a general machinery of localization in triangulated categories. However, such a construction would result in a rather complicated description of the elements of the $\skk$-groups; whereas, in the above approach they are represented as $*$-homomorphisms between certain $\sC$-algebras. 
\end{rem}

\noindent
A $\sC$-algebra $A\cong\ilim_{n\in\NN} A_n$ is called {\em nuclear} if each $A_n$ is a nuclear $C^*$-algebra. Obviously, a commutative $\sC$-algebra is nuclear.

\begin{ex}
Let $X=\dlim_n X_n$ be a countably compactly generated space and $P$ be a principal $PU$-bundle on $X$. Then the $\sC$-algebra $\CT(X,P)\cong\ilim_n \CT(X_n,P_n)$ constructed in subsection \ref{CT} is nuclear. Indeed, each $C^*$-algebra $\CT(X_n,P_n)$ is type $I$ and hence nuclear by a well-known Theorem of Takesaki. They are also clearly separable.
\end{ex}

\begin{lem}
Let $0\map I\map A\map B\map 0$ be an exact sequence of $\sC$-algebras. Then $A$ is nuclear if and only if both $I$ and $B$ are nuclear. In other words, the category of nuclear $\sC$-algebras is closed under the passage to closed two-sided ideals, quotients (by such ideals), and extensions.
\end{lem}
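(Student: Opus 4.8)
The plan is to reduce the statement about $\sC$-algebras to the corresponding known fact for $C^*$-algebras. Recall that by the structural results collected earlier, an exact sequence $0\map I\map A\map B\map 0$ of $\sC$-algebras is the same thing as an inverse system $\{0\map I_n\map A_n\map B_n\map 0\}_{n\in\NN}$ of $C^*$-algebra extensions with surjective connecting maps (see Proposition 5.3 (2) of \cite{NCP1}), and nuclearity of a $\sC$-algebra means precisely nuclearity of each $A_n$ by definition. Thus the first step is to pass to such an inverse-limit presentation, after which the assertion becomes: for each $n$, $A_n$ is nuclear if and only if $I_n$ and $B_n$ are.

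Once the problem is phrased fibrewise, the $C^*$-algebraic input does the work. For the ``only if'' direction, a closed two-sided ideal $I_n$ and a quotient $B_n$ of a nuclear $C^*$-algebra $A_n$ are again nuclear; this is classical (it appears, e.g., in Choi--Effros, or one may cite it through \cite{Blackadar}). For the ``if'' direction, one uses the fact that nuclearity is preserved under extensions of $C^*$-algebras, again a standard result. Hence for every index $n$ the three-term equivalence holds, and so each $A_n$ is nuclear if and only if each $I_n$ and each $B_n$ is nuclear, which by the definition of nuclearity for $\sC$-algebras is exactly the statement that $A$ is nuclear if and only if $I$ and $B$ are.

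A small technical point to address is the compatibility of the inverse-limit presentations of $I$, $A$ and $B$: one wants a single cofinal sequence of indices realizing all three as countable inverse limits of $C^*$-algebras simultaneously, so that the fibrewise extensions $0\map I_n\map A_n\map B_n\map 0$ make sense. This follows from the cited description of $\sC$-algebra extensions as inverse systems of $C^*$-algebra extensions, but it should be spelled out that one may arrange surjectivity of the connecting maps without changing any of the three limits (by the ``intersection of images'' trick recalled in the Preliminaries), so that the relevant $C^*$-quotients are the $B_n$ one expects.

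The main (indeed essentially the only) obstacle is really just this bookkeeping: making sure the three $\sC$-algebras in the exact sequence are presented over a common index set with compatible, surjective connecting maps, so that the permanence properties of $C^*$-nuclearity can be invoked level by level. There is no substantive analytic difficulty beyond quoting the $C^*$-algebra facts, since on the $C^*$-level the closure of nuclear algebras under ideals, quotients and extensions is well documented; once the presentations are aligned, the equivalence for $\sC$-algebras is immediate.
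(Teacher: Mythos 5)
Your proposal is correct and follows essentially the same route as the paper: the paper's (one-line) proof likewise combines the permanence properties of nuclearity for $C^*$-algebras due to Choi--Effros with Proposition 5.3 (2) of \cite{NCP1}, which identifies $\sC$-algebra extensions with inverse systems of $C^*$-algebra extensions. Your additional remarks on aligning the three inverse-limit presentations over a common index set simply make explicit the bookkeeping the paper leaves implicit.
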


\begin{proof}
The result follows easily from the corresponding result about $C^*$-algebras \cite{ChoEff1} and Proposition 5.3 (2) of \cite{NCP1}.
\end{proof}

\begin{defn}
For any separable $\sC$-algebra $A$, we define the (analytic) $\K$-homology of $A$ as $\K^*(A)=\skk_*(A,\CC)$.
\end{defn}

\begin{rem}
A. Bonkat introduced a bivariant $\K$-theory $\BKK$ for inverse systems of $C^*$-algebras \cite{Bonkat}, which is also applicable to $\sC$-algebras. There is yet another bivariant $\K$-theory for pro $C^*$-algebras defined by Weidner \cite{Weidner}. It follows from Satz 5.3.11 of \cite{Bonkat} that the two bivariant theories are naturally isomorphic on the category of separable and nuclear $\sC$-algebras.
\end{rem}

\begin{prop}
The bivariant $\K$-theory groups $\skk$ described above are naturally isomorphic to the bivariant $\K$-theory groups $\BKK$ defined by Bonkat on the category of separable $\sC$-algebras.
\end{prop}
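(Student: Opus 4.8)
The plan is to deduce the claim from the universal characterization of $\kk$ in Theorem \ref{UP}. Bonkat's theory carries an associative composition product, so there is an additive category $\BKK(\cU)$ with the separable $\sC$-algebras as objects, the groups $\BKK_0(A,B)$ as morphism spaces, and a functor $\BKK_0:\cU\functor\BKK(\cU)$ which is the identity on objects. First I would verify that $\BKK_0$ satisfies the hypotheses of Theorem \ref{UP}: homotopy invariance $(E1)$ and $\cpt$-stability $(E2)$ are built into the construction of $\BKK$, and half-exactness $(E3')$ for the class $\fC$ of $\sC$-algebra extensions admitting a completely positive contractive linear section is Bonkat's excision theorem. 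Since here $\cU=\sCalg$ with $\fC$ the cpc-split extensions, the Remark following Theorem \ref{UP} applies verbatim and upgrades $(E3')$ to the full six-term exactness $(E3)$. Theorem \ref{UP} then yields a unique additive functor $\Phi:\kk(\cU)\functor\BKK(\cU)$, the identity on objects, with $\BKK_0=\Phi\circ\kk_0$; in particular we obtain natural homomorphisms $\Phi:\kk_i(A,B)\map\BKK_i(A,B)$ for $i=0,1$, compatible with the composition products.

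It remains to show that each $\Phi$ is an isomorphism. The base case is the full subcategory of separable $C^*$-algebras: there $\kk$ is Kasparov's $\KK$-theory by Remark \ref{KasKK} and $\BKK$ is Kasparov's $\KK$-theory by Bonkat's identification, and the two identifications are compatible (both send a $*$-homomorphism $f$ to $\KK(f)$ and a cpc-split extension to the corresponding class in $\KK_1$), so $\Phi$ is the identity on this subcategory. For the general case I would bootstrap in each variable separately. Fix a separable $C^*$-algebra $B$; writing an arbitrary separable $\sC$-algebra as $A\cong\ilim_n A_n$ with each $A_n$ a separable $C^*$-algebra and surjective connecting homomorphisms (as recalled in Section \ref{locConK}), both $\kk_*(-,B)$ and $\BKK_*(-,B)$ carry Milnor $\ilim^1$-sequences along this tower and $\Phi$ maps one to the other; since $\Phi$ is an isomorphism on each $A_n$, the five lemma forces it to be an isomorphism on $A$. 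Now fix an arbitrary separable $\sC$-algebra $A$ and repeat the argument in the covariant variable, using the $\ilim^1$-sequences for $\kk_*(A,-)$ and $\BKK_*(A,-)$ and the isomorphism already established whenever the target is a $C^*$-algebra. This gives bijectivity of $\Phi$ in full generality, hence the asserted natural isomorphism of bivariant theories. (Alternatively, once one checks that $\kk_0$ also enjoys Bonkat's own universal property, an inverse functor $\BKK(\cU)\functor\kk(\cU)$ is produced directly and the five-lemma step is bypassed.)

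The main obstacle is establishing the Milnor $\ilim^1$-exact sequences for $\kk$ in each variable along a tower of surjective $*$-homomorphisms of separable $\sC$-algebras admitting completely positive contractive linear sections. For $\BKK$ these are part of Bonkat's framework, but for Cuntz's $\kk$ one must produce a short exact sequence of $\sC$-algebras lying in $\fC$ --- a telescope-type extension realizing the two-term $\ilim/\ilim^1$ complex of the system $\{A_n\}$ --- whose associated six-term sequence from $(E3)$ degenerates to the desired $\ilim^1$-sequence, along the lines of Phillips's proof of the analogous statement for $\RK$-theory. The delicate points are checking that this extension admits a completely positive contractive linear section, that the telescope and its shifted variant have the expected $\kk$-type (so that the six-term sequence collapses as required), and that the resulting connecting maps coincide with those of the inverse system; this is precisely where the good behaviour of the bivariant theory under inverse limits of $\sC$-algebras is needed.
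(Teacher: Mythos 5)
Your main line of argument --- build $\Phi:\kk(\cU)\to\BKK(\cU)$ from Theorem \ref{UP} and then prove bijectivity by a double bootstrap over towers of separable $C^*$-algebras --- has a genuine gap, part of which you flag yourself. In the contravariant variable the statement you invoke is not even of the right shape: applying a contravariant functor to the inverse system $A\cong\ilim_n A_n$ yields a \emph{direct} system, so what you need there is a continuity isomorphism $\kk_*(A,B)\cong\dlim_n\kk_*(A_n,B)$, not a Milnor ${\ilim}^1$-sequence. Such continuity statements are precisely the delicate ones; in this paper they are obtained only for nuclear separable $\sC$-algebras (Corollary \ref{Nuc}), and only \emph{after} the present proposition has been proved, by transporting them from Weidner's theory --- so using them here would be circular. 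In the covariant variable, where a Milnor sequence is the right statement, neither the sequence nor the cpc-split telescope extension needed to produce it has been established for $\kk$ at this point, and as you note this is a substantial construction rather than a routine verification.

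None of this machinery is needed, because the missing ingredient is a citation, not a computation: Bonkat's Satz 5.3.10 characterizes $\BKK$ on the category of separable $\sC$-algebras as the universal additive-category-valued functor satisfying $(E1)$, $(E2)$ and $(E3')$ --- exactly the universal property that $\kk_0$ enjoys by Theorem \ref{UP} and the remark following it. Two functors with the same universal property are canonically naturally isomorphic, so $\kk\cong\BKK$ with no five lemma and no limit arguments. This is precisely your parenthetical alternative; it should be promoted to the whole proof, with Bonkat's universal characterization of his own theory as the reference that makes it work.
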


\begin{proof}
The assertion follows from Satz 5.3.10 of \cite{Bonkat}, which characterizes $\BKK$ on the category of separable $\sC$-algebras as the universal additive category valued functor with the properties $(E1)$, $(E2)$ and $(E3')$ as in Theorem \ref{UP} and the Remark thereafter.
\end{proof}

\begin{cor}
On the category of separable and nuclear $\sC$-algebras both $\skk$-theory and $\BKK$-theory agree naturally with Weidner's bivariant $\K$-theory.
\end{cor}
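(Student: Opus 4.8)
The plan is to obtain the corollary by chaining the two comparison isomorphisms already established. Write $\cU_{\textup{nuc}}\subset\sCalg$ for the full subcategory of separable \emph{nuclear} $\sC$-algebras. The Proposition immediately preceding this corollary gives a natural isomorphism of bivariant functors $\kk_*(-,-)\cong\BKK_*(-,-)$ on all of $\sCalg$, built from the universal property of $\kk$ (Theorem \ref{UP}) together with the analogous universal characterization of $\BKK$ (Satz 5.3.10 of \cite{Bonkat}); this restricts in particular to $\cU_{\textup{nuc}}$. The Remark following the definition of $\K$-homology, invoking Satz 5.3.11 of \cite{Bonkat}, supplies a natural isomorphism between $\BKK_*$ and Weidner's bivariant $\K$-theory precisely on $\cU_{\textup{nuc}}$. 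Composing the two natural isomorphisms yields the asserted natural isomorphism between $\kk$-theory and Weidner's theory on $\cU_{\textup{nuc}}$, together with its already-cited counterpart for $\BKK$.

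First I would make precise the sense in which these comparisons are ``natural'': each of the three theories is packaged as an additive category ($\kk(\cU)$, the $\BKK$-category, Weidner's category) equipped with a canonical functor from the category of ($\sC$- or nuclear separable $\sC$-) algebras which is the identity on objects. Bonkat's comparison results are not mere levelwise isomorphisms of Hom-groups but equivalences of these additive categories intertwining the canonical functors; this is automatic from the universal-property formulations, since a functor that is initial among additive functors satisfying properties $(E1)$--$(E3')$ is unique up to unique isomorphism, so any two such are canonically identified, compatibly with the composition products. Thus the proof reduces to: restrict the equivalence $\kk(\cU)\simeq\BKK\text{-category}$ to nuclear separable algebras, and compose it with the equivalence $\BKK\text{-category}\simeq\text{Weidner's category}$ of Satz 5.3.11.

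The one point requiring care---and where I expect to do actual work---is matching the ranges of validity of the two inputs: the isomorphism $\kk\cong\BKK$ of the preceding Proposition holds on \emph{all} separable $\sC$-algebras, whereas Satz 5.3.11 of \cite{Bonkat} is stated only for \emph{nuclear} separable $\sC$-algebras (nuclearity is what makes the relevant Kasparov-type product and completely positive lifting arguments behave). Hence one must intersect the domains and argue throughout on $\cU_{\textup{nuc}}$. This is harmless because $\cU_{\textup{nuc}}$ is a full subcategory stable under all the constructions entering the definitions of the three bivariant theories: stability under closed two-sided ideals, quotients and extensions is the Lemma preceding the definition of $\K$-homology, and stability under $\C([0,1])\prot(-)$, $\C_0((0,1])\prot(-)$, $\C_0((0,1))\prot(-)$, $\cpt\prot(-)$ and the maximal tensor product is clear fibrewise, since these operations stay within nuclear separable $C^*$-algebras. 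Consequently the structural data of each theory restricts to $\cU_{\textup{nuc}}$, the two equivalences restrict accordingly, and their composite proves the corollary.
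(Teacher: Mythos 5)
Your proposal is correct and follows essentially the same route as the paper: the corollary is obtained by composing the natural isomorphism $\kk_*\cong\BKK_*$ from the preceding Proposition (via Satz 5.3.10 of \cite{Bonkat}) with the natural isomorphism between $\BKK_*$ and Weidner's theory on separable nuclear $\sC$-algebras from the preceding Remark (via Satz 5.3.11 of \cite{Bonkat}), restricting to the common domain. Your additional remarks on the categorical meaning of naturality and the closure of the nuclear subcategory under the relevant constructions are sound elaborations of what the paper leaves implicit.
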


\noindent
This enables us to deduce some properties, which are very useful for computation.

\begin{cor} \label{Nuc}
Let $A=\ilim_n A_n$ be any nuclear and separable $\sC$-algebra. Then the functor $\K^*(A)=\skk_*(A,\CC)$ satisfies the following properties:

\begin{enumerate}

\item $\K^*(A(0,1)^2)\cong\K^*(A)$ [Bott periodicity]

\item $\K^*(A)\cong\dlim_n \K^*(A_n)$ [contravariant continuity]

\item Let $\{A_n\}$ be a countable family of nuclear and separable $C^*$-algebras. Then $$\K^*({\prod}_n A_n)\cong\oplus_n \K^*(A_n).$$
\end{enumerate}
\end{cor}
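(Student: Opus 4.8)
The plan is to deduce all three properties from the agreement of $\kk$-theory with Weidner's bivariant $\K$-theory on separable nuclear $\sC$-algebras, established in the preceding corollary, together with the formal structure of $\kk$-theory encoded in the universal characterization of Theorem \ref{UP}. Property (1), Bott periodicity, is the most formal: the functorial cone--suspension extension $0\map A(0,1)\map A(0,1]\map A\map 0$ lies in $\fC$, and $A(0,1]$ is contractible, hence $\kk$-trivial by $(E1)$, so feeding this extension into the six-term exact sequences of $(E3)$ in the first variable (arrows reversed) produces natural suspension isomorphisms $\kk_i(A(0,1),B)\cong\kk_{i+1}(A,B)$; applying this twice gives $\K^*(A(0,1)^2)=\kk_*(A(0,1)^2,\CC)\cong\kk_*(A,\CC)=\K^*(A)$. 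This part does not use nuclearity, and is in any case Bott periodicity for Weidner's theory.

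For (2) we may arrange the connecting $*$-homomorphisms of $A\cong\ilim_n A_n$ to be surjective. The projections $\ilim_n A_n\map A_m$ induce a natural comparison homomorphism $\dlim_n\kk_*(A_n,\CC)\map\kk_*(\ilim_n A_n,\CC)$, and (2) asserts that it is an isomorphism. Restricted to separable nuclear $\sC$-algebras this is exactly continuity of Weidner's bivariant $\K$-theory in the first variable --- equivalently, the reflection under $X\mapsto\C(X)$ of the fact that $\K$-homology, being a homology theory, commutes with countable direct limits of spaces --- so the preceding corollary carries it over to the functor $\kk_*(-,\CC)$. One ingredient for a direct proof from Theorem \ref{UP} is that the functorial universal extension constructed above gives $J^k(\ilim_n A_n)\cong\ilim_n J^k(A_n)$, but I would cite \cite{Weidner} for the continuity statement rather than reprove it.

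Property (3) reduces to (2) together with additivity. The countable product $\prod_n A_n$ (all sequences $(a_n)$ with $a_n\in A_n$, topologised by the seminorms $(a_n)\mapsto\max_{n\leqslant N}\|a_n\|$) is the inverse limit in $\TAlg$ of the finite products $\prod_{n\leqslant N}A_n$ along the coordinate projections, which are surjective $*$-homomorphisms of separable nuclear $C^*$-algebras; hence $\prod_n A_n$ is again a separable nuclear $\sC$-algebra, and (2) gives $\K^*(\prod_n A_n)\cong\dlim_N\K^*(\prod_{n\leqslant N}A_n)$. For finitely many indices the product $\prod_{n\leqslant N}A_n$ agrees with the direct sum $\oplus_{n\leqslant N}A_n$, and since the split extensions $0\map A\map A\oplus B\map B\map 0$ lie in $\fC$ the functor $\kk_*(-,\CC)$ carries finite direct sums to direct sums; thus $\K^*(\prod_{n\leqslant N}A_n)\cong\oplus_{n\leqslant N}\K^*(A_n)$ compatibly with the structure maps as $N$ grows, and passing to the direct limit over $N$ gives $\K^*(\prod_n A_n)\cong\oplus_n\K^*(A_n)$.

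The only step with genuine content is (2): one either extracts continuity of $\kk$-theory in the first variable cleanly from Weidner's framework, or reproves it from the universal property --- the crux there being that $A\mapsto J^k(A)$ commutes with the relevant countable inverse limits and that the resulting homotopy classes of maps into the stabilization are detected at finite stages. Bott periodicity and the reduction of (3) to (2) via additivity are then purely formal.
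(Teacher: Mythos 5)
Your proposal is correct and follows essentially the same route as the paper, which disposes of all three properties in one line by invoking the identification of $\kk_*$ with Weidner's bivariant $\K$-theory on separable nuclear $\sC$-algebras (the preceding corollary) and then citing \cite{Weidner}. The only difference is that you additionally derive (1) formally from the cone--suspension extension and $(E1)$--$(E3)$, and reduce (3) to (2) plus finite additivity, so that Weidner is needed only for the continuity statement (2); both of these supplementary arguments are sound.
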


\begin{proof}
In view of the above Corollary, all the assertions follow from the corresponding results in Weidner's bivariant $\K$-theory \cite{Weidner}.
\end{proof}

\begin{rem}
For any separable $\sC$-algebra $A$, one can also define a new $\K$-theory as $\K^{\textup{new}}_*(A)=\skk_*(\CC,A)$. However, this $\K^{\textup{new}}$-theory will be naturally isomorphic to $\RK$-theory or the $\K$-theory of locally convex algebras defined above (when restricted to separable $\sC$-algebras).
\end{rem}

\begin{ex}
Let $P$ be a principal $PU$-bundle on $SU(\infty)$, such that its cohomology class is $\ell\in\H^3(SU(\infty),\ZZ)\simeq\ZZ$ as in Example \ref{SUK}. We know that the twisted $\K$-homology groups of the pair $(SU(n),\ell)$ are described by
$$\K^\bullet(\CT(SU(n),\iota^*_n(P)))\cong\K_\bullet(SU(n),\ell)\simeq  (\ZZ/c(n,\ell))^{2^{n-1}},$$ where $\K_\bullet=\K_0\oplus\K_1$ and $c(n,\ell)=gcd\{\binom{\ell + i}{i}-1\,:\, 1\leq i\leq n-1\}$. By the above Corollary, one deduces that $$\K_\bullet(SU(\infty),\ell)\cong \dlim_n\K_\bullet(SU(n),\ell)\simeq \dlim_n (\ZZ/c(n,\ell))^{2^{n-1}}.$$ This is again the trivial group.
\end{ex}

\section{Twisted local cyclic homology via separable $\sC$-algebras} \label{HL}
The natural target for a Chern--Connes character type map from $\K$-homology is some version of cyclic cohomology. However, neither periodic nor entire cyclic cohomology produces satisfactory results for $C^*$-algebras. For a separable and nuclear $C^*$-algebra $A$, the entire and the periodic cyclic cohomologies agree (see \cite{Khal}) and one finds $\HP^0(A)\cong\HH^0(A)$, $\HP^1(A)\cong\{0\}$. This result in not very satisfactory from the geometric viewpoint. Therefore, in this section we extend the theory of bivariant local cyclic homology to separable $\sC$-algebras so that we can construct a bivariant Chern-Connes type character taking values in it. Local cyclic (co)homology theory was developed by Puschnigg \cite{Puschnigg,PuschniggHL} and in its general setup it works on a nice category of "ind-Banach algebras". In order to apply this theory we spend some time in constructing such an ind-Banach algebra from a $\sC$-algebra in a functorial manner. Construction actually `factors through the world of 
bornological algebras'. We get a streamlined approach towards local cyclic (co)homology by using Meyer's presentation of the topic in the language of bornological and ind-Banach algebras. In some categorical aspects bornological vector spaces behave somewhat better than topological vector spaces, but we do not dwell on this point here. Since the theory of bornological algebras may not be a part of the standard toolkit of an operator algebraist or a geometer, we include a brief review of some its salient features here. There are a few contemporary textbooks explaining the theory more comprehensively, for instance, \cite{MeyCycHom,CunMeyRos}. We also refer the readers to Waelbroeck's survey \cite{Waelbroeck}, who has done extensive work in developing the theory, and the book of Hogbe-Nlend \cite{Nlend}. In what follows all bornological vector spaces and algebras are tacitly assumed to be complete, which simplifies the discussion.

\subsection{From a $\sC$-algebra to an ind-Banach algebra}
Intuitively, a bornological algebra is an algebra with a prescribed collection of bounded subsets (as opposed to open subsets, which would make it a topological algebra). More specifically, one calls a subset $B\subset V$, where $V$ is a $\CC$-linear space, a {\em disk} if it satisfies:

\begin{itemize}
\item $tx +(1-t)y\in B$ for all $x,y\in B$ and $t\in[0,1]$,

\item $\lambda B=\{\lambda b\,|\, b\in B\}\subset B$ for all $\lambda\in\CC$ with $|\lambda |\leqslant 1$,

\item $B=\cap_{\epsilon>0} \{(1+\epsilon)b\,|\, b\in B\}$.
\end{itemize} If $B\subset V$ is a disk, then the span $V_B=\RR_+ B=\{rb\,|\, r\in\RR_+, b\in B\}$ becomes a seminormed $\CC$-linear space, via the seminorm $\nu_B (v)=\textup{inf}\{r\in\RR_+\,|\, v\in rB\}$. If $V_B$ is actually a Banach space, then $B$ is said to be {\em complete}.

\begin{defn}
A (complete convex) {\em bornological $\CC$-vector space} is a $\CC$-linear space $V$ endowed with a family $\cS$ of subsets of $V$ satisfying the following axioms:

\begin{enumerate}
\item $S_1\in\cS$ and $S_2\subset S_1$, then $S_2\in\cS$,

\item if $S_1,S_2\in\cS$, then $S_1\cup S_2\in\cS$,

\item $\{v\}\in\cS$ for all $v\in V$,

\item if $S\in\cS$ and $c\in\RR_+$, then $cS\in \cS$,

\item any $S\in\cS$ is contained in some $B\in \cS$, where $B$ is a complete disk. \label{CompConv}
\end{enumerate} The family $\cS$ is called a {\em bornology} on $V$ and the subsets in $\cS$ are called the {\em bounded subsets} of $V$.
\end{defn}

\begin{rem}
More general definitions of bornologies exist in the literature. What we have defined above is a {\em complete convex} bornology.  The axiom \ref{CompConv} above imposes these conditions. Since we are not going to discuss more general bornologies, we drop the adjectives altogether.
\end{rem}

\begin{ex} \label{cborn}
Let $V$ be a Fr{\'e}chet space, i.e., a complete, metrizable, locally convex space. The family $\cS=\{W\subset V\,|\, \text{$\overline{W}$ is compact}\}$ defines a bornology on $V$. It is called the {\em precompact bornology} on $V$ and its subsets are called {\em precompact subsets}. For any Fr{\'e}chet space $V$, we denote the bornological vector space $V$, endowed with the precompact bornology by $\pct(V)$.
\end{ex}

\noindent
A $\CC$-linear map $f:(V_1,\cS_1)\map (V_2,\cS_2)$ between bornological vector spaces is called {\em bounded} if $S\in\cS_1$ implies $f(S)\in\cS_2$. For any two bornological vector spaces $V_1,V_2$, one defines the product bornological vector space as the $\CC$-linear space $V_1\times V_2$ equipped with the coarsest bornology making both projection maps $V_1\times V_2\map V_i$ with $i=1,2$ bounded. The bornological vector space $V_1\times V_2$ is complete if so are both $V_1$ and $V_2$.

The canonical inclusion of the category of all complete bornological vector spaces inside that of all (not necessarily complete) bornological vector spaces admits a left adjoint, which is called the {\em completion functor}. Being a left adjoint it commutes with all inductive limits. Since all our bornological vector spaces are assumed to be complete, one needs to apply this functor tacitly, whenever one runs into an incomplete one. The {\em complete bornological tensor product} $\brot$ between two bornological vector spaces $V,W$ is defined by the universal property: there is a canonical bounded bilinear map $\pi: V\times W\map V\brot W$, such that given any bounded bilinear map $\theta:V\times W\map Z$ into a bornological vector space $Z$, there is a unique bounded bilinear map $\theta':V\brot W\map Z$ satisfying $\theta=\theta'\circ\pi$. We are going to describe the explicit construction of the completed tensor product after introducing the dissection functor (see Equation \eqref{brot}).

A {\em bornological algebra} $A$ is a bornological vector space endowed with an associative, bilinear, and bounded multiplication map. Hence the multiplication map induces a bounded linear map $A\brot A\map A$. Now we establish a connection between Fr{\'e}chet algebras and bornological algebras.

\begin{lem}
The association $V\mapsto \pct(V)$, which is identity on morphisms, defines a fully faithful functor from the category of Fr{\'e}chet spaces with continuous linear maps to that of bornological vector spaces and bounded linear maps.
\end{lem}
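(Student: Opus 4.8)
The plan is to verify the three clauses of the statement---functoriality, faithfulness, and fullness---separately, with essentially all the work concentrated in fullness (this is, in spirit, Meyer's observation in \cite{MeyCycHom}).

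For functoriality I would recall that a continuous linear map $f\colon V\map W$ between Fr\'echet spaces carries precompact sets to precompact sets: if $\overline{S}$ is compact then $f(\overline{S})$ is compact (hence closed in the Hausdorff space $W$), so $\overline{f(S)}\subseteq f(\overline{S})$ is a closed subset of a compact set and therefore compact. Thus $f$ is automatically a bounded linear map $\pct(V)\map\pct(W)$. Since $V\mapsto\pct(V)$ leaves the underlying linear map untouched, compatibility with composition and identities is immediate, and faithfulness is then formal: the functor is the identity on the underlying sets of morphisms, and a continuous linear map between Fr\'echet spaces is determined by its underlying function.

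The substantial point is fullness: given a bounded linear map $f\colon\pct(V)\map\pct(W)$, I must show that $f\colon V\map W$ is continuous for the Fr\'echet topologies. As $V$ is metrizable it suffices to prove sequential continuity, i.e. that $f$ sends null sequences to null sequences. The key auxiliary fact is that in a metrizable topological vector space every null sequence can be dilated to a null sequence: if $x_n\to 0$ in $V$ and $\{U_k\}$ is a decreasing countable base of balanced neighbourhoods of $0$, pick an increasing sequence $N_k$ with $n\geqslant N_k\Rightarrow kx_n\in U_k$ and set $\lambda_n=k$ for $N_k\leqslant n<N_{k+1}$; then $\lambda_n\to\infty$ while $\lambda_n x_n\to 0$. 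Consequently $C=\{\lambda_n x_n\,:\,n\in\NN\}\cup\{0\}$ is compact, hence precompact, so $f(C)$ is precompact and in particular bounded in $W$. Since $f(x_n)=\lambda_n^{-1}f(\lambda_n x_n)\in\lambda_n^{-1}f(C)$ and $\lambda_n^{-1}\to 0$, we get $f(x_n)\to 0$, so $f$ is continuous and therefore comes from a morphism of Fr\'echet spaces.

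The main obstacle is precisely this fullness step. A priori $f$ is only assumed to carry precompact (hence bounded) sets to precompact sets, not arbitrary bounded sets to bounded sets, so one cannot directly invoke the fact that metrizable locally convex spaces are bornological in the topological sense. The dilation trick is what bridges the gap: it converts boundedness of $f$ on compact null sequences into sequential continuity. Everything else---compactness of a convergent sequence together with its limit, and the reduction of continuity to sequential continuity on a metrizable domain---is standard.
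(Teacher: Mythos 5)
Your proof is correct, and the overall decomposition (functoriality/faithfulness are formal once one notes that continuous maps preserve precompact sets; the content is in fullness) matches the paper exactly. The only difference is that the paper disposes of fullness by citing Theorem 1.29 of Meyer's book, whereas you actually supply the argument: the dilation of a null sequence $x_n\to 0$ to a null sequence $\lambda_n x_n\to 0$ with $\lambda_n\to\infty$, so that $\{\lambda_n x_n\}\cup\{0\}$ is a compact (hence precompact) set whose bounded image forces $f(x_n)=\lambda_n^{-1}f(\lambda_n x_n)\to 0$; combined with the reduction of continuity to sequential continuity on a metrizable domain, this is precisely the standard proof behind the cited result, carried out correctly and self-containedly. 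So you have proved, rather than quoted, the one nontrivial step.
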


\begin{proof}
One needs to observe that any continuous homomorphism is also bounded, i.e., it preserves precompact subsets. This says that the functor is faithful. That it is full follows from Theorem 1.29 of \cite{MeyCycHom}.
\end{proof}

\begin{cor}
If $A$ is Fr{\'e}chet algebra, then $\pct(A)$ is a bornological algebra and the association is functorial.
\end{cor}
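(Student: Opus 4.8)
The plan is to bootstrap the statement off the preceding Lemma, which already identifies $\pct(-)$ as a fully faithful functor at the level of underlying vector spaces; what remains is only to check compatibility with the multiplicative structure, which is essentially formal.

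First I would recall that the multiplication $m\colon A\times A\map A$ of a Fr\'echet algebra is jointly continuous --- this is part of the standing hypothesis (the relevant Fr\'echet algebras are locally multiplicatively convex), and in any case a separately continuous bilinear map on a product of Fr\'echet spaces is automatically jointly continuous. To endow $\pct(A)$ with a bornological algebra structure I must produce a bounded bilinear map $\pct(A)\times\pct(A)\map\pct(A)$, equivalently, by the universal property of $\brot$, a bounded linear map $\pct(A)\brot\pct(A)\map\pct(A)$. The one point requiring an argument is that $m$ is bounded for the precompact bornologies, i.e.\ that $m(S_1\times S_2)$ is precompact in $A$ whenever $S_1,S_2\subset A$ are precompact. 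This is immediate: by the definition of the product bornology, a bounded subset of $\pct(A)\times\pct(A)$ is contained in a product $S_1\times S_2$ of precompact sets, and $\overline{S_1}\times\overline{S_2}$ is compact in $A\times A$ by Tychonoff's theorem, so $S_1\times S_2$ is relatively compact there; applying the continuous map $m$ sends a relatively compact set to a relatively compact, hence precompact, set. Associativity and bilinearity of the induced multiplication on $\pct(A)$ are inherited verbatim from those of $A$, so $\pct(A)$ is indeed a bornological algebra.

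For functoriality, given a continuous algebra homomorphism $f\colon A\map B$ of Fr\'echet algebras, the Lemma shows that $\pct(f)=f$ is a bounded linear map $\pct(A)\map\pct(B)$, and it is multiplicative because $f$ is; hence it is a morphism of bornological algebras. Since $\pct(-)$ is the identity on morphisms, preservation of identities and of composition is automatic, so $A\mapsto\pct(A)$, $f\mapsto\pct(f)$ defines a functor from Fr\'echet algebras with continuous homomorphisms to bornological algebras with bounded homomorphisms. I do not expect any serious obstacle: the entire content is the observation that a continuous bilinear map of Fr\'echet spaces is bounded for the precompact bornology, which itself reduces to the finite product of relatively compact sets being relatively compact.
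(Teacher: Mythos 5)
Your argument is correct, but it is not the route the paper takes. The paper disposes of the corollary in one line by citing the fact that $\pct$ is compatible with tensor products, namely the natural isomorphism $\pct(V)\brot\pct(W)\cong\pct(V\grot W)$ (Theorem 1.87 of Meyer's book): the jointly continuous multiplication $A\grot A\map A$ then transports directly to a bounded map $\pct(A)\brot\pct(A)\cong\pct(A\grot A)\map\pct(A)$. You instead verify by hand that the bilinear multiplication is bounded for the precompact bornology --- a bounded set of the product bornology sits inside $S_1\times S_2$ with each $S_i$ precompact, the closure $\overline{S_1}\times\overline{S_2}$ is compact, and a continuous map sends it to a compact set --- and then invoke the universal property of $\brot$, which is exactly how the paper defines a bornological algebra anyway. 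Your argument is more elementary and self-contained: it uses only the easy direction (a bounded bilinear map induces a bounded map out of $\brot$) and avoids the genuinely nontrivial content of the cited isomorphism, which in the other direction rests on Grothendieck's description of precompact subsets of a projective tensor product. What the paper's approach buys is the stronger monoidality statement itself, which is of independent use in the surrounding discussion (e.g.\ in contrasting with the dissection functor, which is only lax monoidal); what yours buys is transparency and independence from that citation. The functoriality part of your proof coincides with the paper's implicit use of the preceding Lemma. One cosmetic remark: Tychonoff is not needed for a finite product of compact sets.
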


\begin{proof}
This follows from the fact $\pct(V)\brot\pct(W)\cong \pct(V\grot W)$, where $V, W$ are Fr{\'e}chet spaces (see Theorem 1.87 of \cite{MeyCycHom}.
\end{proof}

The procedure of dissection enables us to move from the bornological world to the topological world. Let $V$ be a bornological vector space and let $\cS_c(V)$ be the set of complete bounded disks in $V$. It is a directed set under the relation $B_1\leq B_2$ if there exists a $c\in\RR_+$, such that $B_1\subset cB_2$. Any $B\in\cS_c(V)$ determines a Banach subspace $V_B\subset V$ and if $B_1\leq B_2$, then there is an injective bounded linear map $V_{B_1}\map V_{B_2}$. This procedure produces an inductive (directed) system of Banach spaces indexed by $\cS_c(V)$. For any bornological vector space $V$ the inductive systems of Banach spaces indexed by $\cS_c(V)$ thus obtained is denoted by $\diss(V)$. This construction is actually functorial with respect to bounded linear maps, i.e., any bounded linear map between bornological vector spaces $V\map W$ induces a morphism of inductive systems of Banach spaces $\diss(V)\map\diss(W)$. The functor from bornological vector spaces to inductive systems of Banach spaces 
admits a left adjoint, which is $\sep\dlim$. The bornological tensor product, which was defined by its universal property above, can be described in terms of these functor as:

\beq \label{brot}
V\brot W\cong \sep\dlim_{(B,B')} V_B\grot W_{B'}, \;\;\; B\in\cS_c(V), B'\in\cS_c(W),
\eeq where $\grot$ denotes Grothendieck's projective tensor product between Banach spaces.

Given any category $\cC$, one can construct its ind-category $\overset{\map}{\cC}$. We do not belabour the concept of ind-categories. Let us simply mention that its objects are formal diagrams $F_I:I\map\cC$, where $I$ is a small filtering category and, by definition, $$\Hom_{\overset{\map}{\cC}}(F_I,F'_J)=\ilim_i \dlim_j \Hom_\cC(F_I(i),F'_J(j)), \;\; i\in I, j\in J.$$
Let $\Ban$ denote the symmetric monoidal category of Banach spaces with $\grot$ giving the symmetric monoidal structure. Then $\indBan$ denotes the symmetric monoidal category of inductive systems of Banach spaces with the monoidal structure given by $\grot$ (extended naturally to inductive systems $A_I \grot B_J:= \{A_i\grot B_j\}_{(i,j)\in I\times J}$). For simplicity we continue to denote the monoidal structure on $\indBan$ by $\grot$. The constant system $\CC$ is the unit object of $\indBan$. Given any symmetric monoidal category, one can talk about a monoid object in that category. The ind-Banach algebras are precisely the monoid objects in $\indBan$. It should be noted that an ind-Banach algebra is not necessarily an inductive system of Banach algebras; the ones that are inductive systems of Banach algebras are called {\em locally multiplicative}.

\begin{rem}
Observe that a bornological algebra can be viewed as a monoid object in the symmetric monoidal category of bornological vector spaces equipped with $\brot$. The dissection functor $\diss$ is unfortunately not symmetric monoidal in general, i.e., $\diss(V\brot W)\ncong\diss(V)\grot\diss(W)$. However, if $V,W$ are Fr{\'e}chet spaces, then there is a natural isomorphism $\diss\circ\pct(V)\grot\diss\circ\pct(W)\cong \diss\circ\pct(V\grot W)$ (see Theorem 1.166. of \cite{MeyCycHom}). Therefore, the functor $\diss\circ\pct(-)$ preserves monoid objects between Fr{\'e}chet spaces and ind-Banach spaces, i.e., it sends a Fr{\'e}chet algebra to an ind-Banach algebra.
\end{rem}

Given any Fr{\'e}chet algebra $A$ (in particular, a $\sC$-algebra), by applying the composite functor $P(A):=\diss\circ\pct(A)$, we obtain an ind-Banach algebra, which establishes the functorial passage from Fr{\'e}chet algebras (in particular, $\sC$-algebras) to ind-Banach algebras alluded to above. Thus Meyer's technology may be deployed to define the (bivariant) local cyclic homology of $P(A)$.

\begin{rem}
Note that, whilst a $\sC$-algebra $A$ is defined as a countable inverse limit of $C^*$-algebras, the associated ind-Banach algebra $P(A)$ is merely expressed as a monoid object in the category of inductive systems of Banach spaces. These algebras are not {\em locally multiplicative} in the sense of section 3.2.2. of \cite{MeyCycHom} in general.
\end{rem}

\subsection{Local cyclic homology via the analytic tensor algebra} \label{AnaTen}
Recall that the periodic cyclic homology was defined in Section \ref{HPT} using the $\X$-complex of a quasi-free (tensor) algebra. We adopt a similar approach for defining local cyclic (co)homology. For any ind-Banach algebra $A$, construct the algebra of differential forms (resp. even differential forms) with Fedosov product $(\Omega_\alg(A),\circ)$ (resp. $(\Omega^\ev_\alg(A),\circ)$) purely algebraically, i.e., construct it formally in the symmetric monoidal category $(\indBan, \grot)$ using the fact that it is a monoid object. Write $A\cong \{ A_i\}_{i\in I}$, with $I$ directed and each $A_i$ a Banach space. Equip each $A_i$ with a closed unit ball $B_i$ and define $B_{i,n}=nB_i$ for all $(i,n)\in I\times\NN$. Set $$\langle\langle B_{i,n}\rangle\rangle =B_{i,n}\cup B_{i,n}(dB_{i,n})^\infty\cup (dB_{i,n})^\infty,$$ where $d$ is the formal differential in $\Omega_\alg(A)$ and $(dB_{i,n})^\infty=\cup_{k=1}^\infty (dB_{i,n})^k$. Let $\overline{\langle\langle B_{i,n}\rangle\rangle}$ denote the minimal 
complete bounded bounded disk in $A_i$ containing $\langle\langle B_{i,n}\rangle\rangle$. We denote the completion of $\Omega_\alg(A_i)$ (resp. $\Omega^\ev_\alg(A_i)$ with respect to the norm defined by $\overline{\langle\langle B_{i,n}\rangle\rangle}$ by $\Omega_\an(A_{i,n})$ (resp. $\Omega^\ev_\an(A_{i,n})$. Letting $I\times\NN$ be directed in the obvious manner, we get an inductive system of Banach spaces $\{\Omega_\an(A_{i,n})\}_{I\times\NN}$. The Fedosov product and the differential extend to this inductive system, making it an ind-Banach algebra. The ind-Banach subalgebra $\{(\Omega^\ev_\an(A_{i,n})\}$ is by definition the {\em analytic tensor algebra} $\cT_\an(A)$. One can now construct the $\X$-complex of $\cT_\an(A)$. We omit the details of this construction, which is very similar to the one described in Section \ref{HPT}. The interested readers can also find the details in Section 5.2 of \cite{MeyCycHom}.

\begin{rem}
The functor $A\mapsto P(A)$ converts a Fr{\'e}chet algebra $A$ into an ind-Banach algebra. We simplify notations by dropping $P$. It is tacitly assumed that the machinery discussed below is applied to a Fr{\'e}chet algebra after converting it to an ind-Banach algebra by applying the functor $P$.
\end{rem}

\subsection{Local homotopy category of $\ZZ/2$-graded complexes} \label{locHom} Let $\cC$ denote an additive category. One can form the triangulated homotopy category of $\ZZ/2$-graded chain complexes in $\cC$ with the mapping cone triangles as the prototypical exact triangles. We denote this category by $\Ho\cC_\bullet$. Recall that a $\ZZ/2$-graded complex is one of the form

\beqn
\cdots \overset{d_0}{\map} C_1\overset{d_1}{\map} C_0 \overset{d_0}{\map} C_1\overset{d_1}{\map} C_0 \overset{d_0}{\map} \cdots
\eeqn and the morphisms in the homotopy category are the homotopy classes of chain maps. In general, for any two chain complexes $X_\bullet,Y_\bullet$, there is a mapping chain complex $\Hom(X_\bullet,Y_\bullet)$, whose $n$-cycles are maps $X_\bullet[n]\map Y_\bullet$ of graded objects, and the differential being the graded commutator $d(f)=f d_Y - (-1)^{|f|} d_X f$. It follows that the morphisms between $X_\bullet$ and $Y_\bullet$ in the homotopy category $\Ho\cC_\bullet$ is given by $\H_0(\Hom(X_\bullet,Y_\bullet))$.

Now set $\cC=\overset{\longrightarrow}{\Ban}$ to be the additive category of inductive systems of Banach spaces. For any $X_\bullet, Y_\bullet\in\Ho\cC_\bullet$ one can define the functorial mapping complex $\Hom(X_\bullet,Y_\bullet)$. The {\em local homology groups} of a specific mapping complex will eventually compute the bivariant local cyclic homology of a pair of ind-Banach algebras. As the name suggests, the local homology of an object in $\Ho\cC_\bullet$ is not the same as its na{\"{i}}ve homology. It is obtained by passing to a localization of the triangulated category $\Ho\cC_\bullet$ and then taking its homology. Any Banach space can be viewed as an inductive system of Banach spaces via the constant system. Similarly, any chain complex of Banach spaces can be viewed as a {\em finitely presented} object of $\Ho\cC_\bullet$. We call a chain complex $Y_\bullet$ in $\Ho\cC_\bullet$ {\em locally contractible} if for any chain complex of Banach spaces $X_\bullet$, one has $\H_*(\Hom(X_\bullet,Y_\bullet))
=\{0\}$. A chain map $f:Y_\bullet\map Y'_\bullet$ is called a {\em local homotopy equivalence} if and only if the mapping cone of $f$ is locally contractible. An exact functor from $\Ho\cC_\bullet$ to any other triangulated category is called {\em local} if it sends a local homotopy equivalence to an isomorphism. The {\em local homotopy category of $\ZZ/2$-graded chain complexes}, denoted by $\Ho\cC_\bullet^\loc$, is by definition the codomain of the universal local functor $\loc:\Ho\cC_\bullet\map\Ho\cC_\bullet^\loc$. Formal localization theory of triangulated categories ensures its existence. For any $X_\bullet,Y_\bullet\in\Ho\cC_\bullet$, we define the {\em bivariant local homology} as $\H_n^\loc(X_\bullet,Y_\bullet)=\Hom_{\Ho\cC_\bullet^\loc}(X_\bullet,Y_\bullet[n])$. For any ind-Banach algebra $A$, it is clear that the $\X$-complex $\X(\cT_\an(A))\in\Ho\cC_\bullet$. It turns out that the analytic tensor algebra $(\Omega^\ev_\an(A),\circ)$ constructed above is actually analytically quasi-free, so that 
one may define the (bivariant) local cyclic homology groups using the $\X$-complex (see Theorem 5.38 of ibid.).

\begin{defn} \label{HLdefn}
Given any two ind-Banach algebras $A,B$, one defines the bivariant local cyclic homology groups as $\HL_*(A,B)=\H_*^\loc(\X(\cT_\an(A)),\X(\cT_\an(B)))$.

\noindent
In particular, $\HL_0(A,B)=\Hom_{\Ho\cC_\bullet^\loc}(\X(\cT_\an(A)),\X(\cT_\an(B)))$, where $\cC=\overset{\longrightarrow}{\Ban}$, and the local cyclic homology (resp. cohomology) groups of $A$ are defined as $$\text{$\HL_*(A)=\HL_*(\CC,A)$ (resp. $\HL^*(A)=\HL_*(A,\CC)$).}$$

\noindent
If $A,B$ are Fr{\'e}chet algebras, then by definition $\HL_*(A,B)=\HL_*(P(A),P(B))$.
\end{defn}

\section{The bivariant Chern--Connes type character} \label{HLCC}
Now we construct a bivariant Chern--Connes type character from the bivariant $\K$-theory to the bivariant local cyclic homology for separable $\sC$-algebras and then specialize to the dual Chern--Connes character from (analytic) $\K$-homology to local cyclic cohomology.

\begin{lem} \label{stability}
If a functor $F$ from the category of separable $\sC$-algebras is homotopy invariant, then $F(\cpt\prot -)$ is $C^*$-stable.
\end{lem}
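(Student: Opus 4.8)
The plan is to unwind the definition of $C^*$-stability and then write down an explicit homotopy inverse. Recall (axiom $\ks$ and property $(E2)$) that a functor $G$ on $\sCalg$ is $C^*$-stable precisely when it sends every corner embedding $\iota_A\colon A\to\cpt(A)=\cpt\prot A$ to an isomorphism; hence for $G:=F(\cpt\prot-)$ I must show that, for every separable $\sC$-algebra $A$, the functor $F$ carries
$$\id_\cpt\prot\iota_A\colon\ \cpt\prot A\longrightarrow\cpt\prot\cpt\prot A,\qquad k\otimes a\longmapsto k\otimes e_{11}\otimes a$$
to an isomorphism.

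First I would fix a unitary $w\colon H\otimes H\xrightarrow{\sim}H$ of the underlying separable Hilbert space (equivalently, a bijection $\NN\times\NN\xrightarrow{\sim}\NN$); it induces a $*$-isomorphism $\mu:=\textup{Ad}(w)\colon\cpt\prot\cpt\xrightarrow{\sim}\cpt$ and hence an isomorphism $\nu_A:=\mu\prot\id_A\colon\cpt\prot\cpt\prot A\xrightarrow{\sim}\cpt\prot A$. I claim $F(\nu_A)$ is a two-sided inverse of $F(\id_\cpt\prot\iota_A)$. To verify this one computes the two composites: writing $\theta\colon\cpt\to\cpt\prot\cpt$ for the corner $k\mapsto k\otimes e_{11}$ and $\widetilde v\colon H\to H\otimes H$, $\xi\mapsto\xi\otimes v_0$, for the corresponding isometry ($e_{11}$ being the rank-one projection onto $\CC v_0$), one finds
$$\nu_A\circ(\id_\cpt\prot\iota_A)=(\mu\circ\theta)\prot\id_A=\textup{Ad}(w\widetilde v)\prot\id_A,\qquad(\id_\cpt\prot\iota_A)\circ\nu_A=(\theta\circ\mu)\prot\id_A=\textup{Ad}(\widetilde v w)\prot\id_A.$$
Here $w\widetilde v$ is an isometry of $H$ and $\widetilde v w$ an isometry of $H\otimes H$, each with infinite-dimensional cokernel. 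Using the classical fact that conjugation by such an isometry is homotopic, through $*$-homomorphisms, to the identity endomorphism of the algebra of compact operators, and tensoring these homotopies with $\id_A$ (permissible, as $-\prot A$ preserves homotopies since the cylinder is $\C([0,1])\prot A$), one obtains $\nu_A\circ(\id_\cpt\prot\iota_A)\simeq\id$ and $(\id_\cpt\prot\iota_A)\circ\nu_A\simeq\id$. Applying the homotopy-invariant functor $F$ and using functoriality then yields $F(\nu_A)\circ F(\id_\cpt\prot\iota_A)=\id$ and $F(\id_\cpt\prot\iota_A)\circ F(\nu_A)=\id$, so $F(\id_\cpt\prot\iota_A)$ is an isomorphism. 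Since $A$ was arbitrary, $F(\cpt\prot-)$ is $C^*$-stable.

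The genuine content — and the step I expect to be the main obstacle — is exactly that classical homotopy $\textup{Ad}(V)\simeq\id_\cpt$ for an infinite-codimension isometry $V$ of a separable infinite-dimensional Hilbert space; this is precisely the absorption/rotation homotopy that underlies the $C^*$-stability of operator $\K$-theory, and I would simply invoke it (cf.\ the standard treatments of $C^*$-stability in the literature on the universal property of Kasparov's bivariant theory). Everything else is bookkeeping with tensor factors; two small points one should keep in mind are that $\cpt\prot\cpt\cong\cpt$ holds for the maximal tensor product used throughout, $\cpt$ being nuclear, and that the various homotopies above take place among $*$-homomorphisms of $C^*$-algebras, hence remain valid in the larger category of separable $\sC$-algebras.
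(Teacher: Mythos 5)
Your argument is correct and is essentially the proof the paper gives: the paper notes that the corner embedding $\iota\colon\cpt\map\cpt\prot\cpt$ is homotopic to an isomorphism $\alpha$, tensors that homotopy with $\id_A$ (the cylinder being $\cpt\prot\cpt\prot\C([0,1])\prot A$), and concludes $F(\iota\prot\id)=F(\alpha\prot\id)$ is an isomorphism. Your version merely makes the isomorphism $\nu_A$ and the underlying isometry/rotation homotopies explicit, so no further comment is needed.
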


\begin{proof}
Since the corner embedding $\iota:\cpt\map\cpt\prot\cpt$ is homotopic to an isomorphism (say $\alpha$), there is commutative diagram of $*$-homomorphisms

\beqn
 \xymatrix{
 & \cpt\prot\cpt \\
\cpt
\ar[ur]^{\iota}
\ar[rr]^{h}
\ar[dr]_{\alpha}
&& \cpt\prot\cpt\prot\C([0,1])
\ar[ul]_{\ev_0}
\ar[dl]^{\ev_1} \\
& \cpt\prot\cpt, } \\
\eeqn Now applying $-\prot A$ to the above diagram one obtains the homotopy between the corner embedding $\iota\prot\id:\cpt\prot A\map \cpt\prot\cpt\prot A$ and an isomorphism $\alpha\prot\id$. Finally, by the homotopy invariance of $F$, one concludes that $F(\iota\prot\id)=F(\alpha\prot\id)$ is an isomorphism.
\end{proof}

\noindent
The composition of morphisms in $\Ho\cC_\bullet^\loc$ induces a natural associative composition product $$\HL_*(A,B)\otimes\HL_*(B,C)\map\HL_*(A,C)$$ for any three ind-Banach algebras (see Proposition 5.8 of \cite{PuschniggHL}). This enables us to define an additive category, whose objects are $\sC$-algebras and the morphisms are $\HL_0$-groups with the composition of morphisms given by the above product. Of course, one needs to apply the functor $P(-)$ to convert a $\sC$-algebra into an ind-Banach algebra but, as mentioned before, we suppress the application of $P(-)$ from our notations below. We denote the category, whose objects are separable $\sC$-algebras and the morphisms are the bivariant local cyclic homology $\HL_0$-groups, by $\HLcat$. Any $*$-homomorphism $A\map B$ between $\sC$-algebras induces a map of $\X$-complexes $\X(\cT_\an(A))\map\X(\cT_\an(B))$, which eventually induces a morphism in $\Ho\cC_\bullet^\loc$ giving rise to an element in $\HL_0(A,B)$. The composition of $*$-homomorphisms is 
compatible with the above-mentioned product in bivariant local cyclic homology, whence there is a canonical functor $\sCalg\map\HLcat$ that applies $P(-)$ to the objects. Recall that $\sCalg$ denotes the category of separable $\sC$-algebras with $*$-homomorphisms.

\begin{prop} \label{HLprop}
The covariant functor $\sCalg\functor\HLcat$, sending $A\mapsto \cpt\prot A$, has the property that the associated functors $\Hom_{\HLcat}(\cpt\prot A,\cpt\prot -) $ and $\Hom_{\HLcat}(\cpt\prot -,\cpt\prot A)$ satisfy the properties $(E1)$, $(E2)$ and $(E3)$ for all $A\in\sCalg$ as in Theorem \ref{UP}.
\end{prop}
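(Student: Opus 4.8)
The goal is to verify that the functor $\sCalg\functor\HLcat$ given by $A\mapsto\cpt\prot A$ satisfies the three axioms $(E1)$, $(E2)$, $(E3)$ from Theorem \ref{UP}, so that the universal property of $\kk$ produces the desired bivariant Chern--Connes character. I would organize the proof around the three properties in turn, exploiting that local cyclic homology is known (via Puschnigg/Meyer) to be diffotopy (hence homotopy) invariant, $\cL^1$-stable, and excisive for extensions with a bounded linear section — facts I would import from \cite{Puschnigg,PuschniggHL,MeyCycHom}.

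First I would dispose of $(E1)$: homotopy invariance of $\HL_*$. The evaluation maps $\ev_i:A[0,1]\map A$ are $*$-homomorphisms of $\sC$-algebras, and $A[0,1]=\C([0,1])\prot A$. Since $P(-)=\diss\circ\pct(-)$ sends the $\sC$-algebra cylinder to (a completion of) the bornological cylinder over $P(A)$, and Meyer's local cyclic homology is invariant under continuous (hence bounded, precompact-preserving) homotopies, $\HL_0(\ev_i)$ is an isomorphism; the argument for $\HL_0(B,\ev_i)$ and $\HL_0(\ev_i,B)$ after smashing with $\cpt\prot A$ is the same. Here one should also check that $P$ indeed carries the $\sC$-algebra homotopy to a bornological homotopy; this is where Example \ref{cborn} and the fully faithfulness of $\pct$ on Fr\'echet spaces get used, and one writes $A\cong\ilim_n A_n$ and checks levelwise.

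For $(E2)$ — $C^*$-stability of $A\mapsto\HL_0(\cpt\prot A,-)$ — I would invoke Lemma \ref{stability} directly: since $\HL_*$ is homotopy invariant as just noted, the lemma gives that $\HL_*(\cpt\prot -)$ is $C^*$-stable, i.e.\ the corner embedding $\iota\prot\id:\cpt\prot A\map\cpt\prot\cpt\prot A$ becomes an isomorphism in $\HLcat$; applying $\Hom_{\HLcat}(-,D)$ and $\Hom_{\HLcat}(D,-)$ then yields $(E2)$ for both variables. One small point to address is that $\HL$ a priori is $\cL^1$-stable (trace-class stable) rather than $\cpt$-stable, but the corner-embedding argument of Lemma \ref{stability} only uses homotopy invariance and the homotopy $\iota\simeq\alpha$ inside $\cpt\prot\cpt$, so it goes through verbatim.

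The main obstacle is $(E3)$: the six-term exact sequences for extensions in $\fC$. The honest input is that $\HL_*$ satisfies excision for extensions of ind-Banach (hence $\sC$-) algebras admitting a bounded linear section — Puschnigg's excision theorem, which after applying $P(-)$ applies to our $\sC$-algebra extensions $0\map I\map A\map B\map 0$ with a completely positive contractive (in particular bounded linear) section. Excision gives, for each $D$, a six-term sequence in the pair $(D,-)$ and another in $(-,D)$ connecting $I$, $A$, $B$ with a degree shift, and after smashing everything with $\cpt\prot(-)$ and using Bott periodicity of $\HL$ to identify the suspension $A(0,1)$ with a degree shift, one rewrites this as exactly the hexagon displayed in $(E3)$. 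The care needed is: (i) checking that $P(-)$ sends the $\sC$-algebra extension with its bounded linear section to an admissible ind-Banach extension for Puschnigg's excision — one writes the extension as an inverse system $\{0\map I_n\map A_n\map B_n\map 0\}$ of $C^*$-algebra extensions (Proposition 5.3(2) of \cite{NCP1}) with bounded linear sections and applies $\diss\circ\pct$ levelwise; and (ii) making sure the two natural hexagons assemble correctly, which is routine once excision and Bott periodicity are in hand. I would close by remarking that these are precisely the hypotheses under which the universal property of $\kk$ (Theorem \ref{UP}) yields a unique functor $\kk(\sCalg)\map\HLcat$, i.e.\ the bivariant Chern--Connes character of Theorem \ref{BivCC}.
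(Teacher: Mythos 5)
Your proposal is correct and follows essentially the same route as the paper's proof: $(E1)$ from the continuous homotopy invariance of $\HL_*$ in both variables (Meyer), $(E2)$ from Lemma \ref{stability} applied to the homotopy-invariant functors $\HL_*(-,D)$ and $\HL_*(D,-)$, and $(E3)$ from Puschnigg's excision theorem for semi-split extensions after applying $P(-)$. The extra care you flag — that $P(-)$ sends a semi-split $\sC$-algebra extension to an admissible ind-Banach extension, checked levelwise on the inverse system — is exactly the point the paper asserts without elaboration, so your write-up is if anything slightly more detailed on the same argument.
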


\begin{proof}
Recall that $\Hom_{\HLcat}(A,-)=\HL_0(A,-)$ and $\Hom_{\HLcat}(-,A)=\HL_0(-,A)$. The property $(E1)$ follows from Corollary 7 of \cite{HLHomotopy}. The functor $P(\cpt\prot -)$ sends a semi-split extension of $\sC$-algebras to such an extension of ind-Banach algebras. It follows from the excision results of local cyclic (co)homology with respect to such extensions (see Theorem 5.13 of \cite{PuschniggHL} and Theorem 5.77 of \cite{MeyCycHom}) that property $(E3')$ as in Remark \ref{E3form} is satisfied, which is sufficient for our purposes. The functor $\HL_*(\cpt\prot -,D)$ is $C^*$-stable by the above Lemma \ref{stability}, since the functor $\HL_*(\cpt\prot-,D)$ is homotopy invariant for any fixed separable $\sC$-algebra $D$. A similar argument shows that $\HL_*(D,\cpt\prot -)$ is also $C^*$-stable thus verifying $(E2)$.
\end{proof}

\begin{thm} \label{BivCC}
There is a natural multiplicative bivariant Chern--Connes type character $\ch^\biv_*:\skk_*(A,B)\cong\skk_*(\cpt\prot A,\cpt\prot B)\map\HL_*(\cpt\prot A,\cpt\prot B)$.
\end{thm}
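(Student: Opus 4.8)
The plan is to deduce the existence of $\ch_\biv$ formally from the universal characterization of $\kk$-theory in Theorem \ref{UP}, using Proposition \ref{HLprop} as the single substantive input. Consider the covariant functor $F\colon\sCalg\functor\HLcat$ defined by $F(A)=\cpt\prot A$ (with the passage through ind-Banach algebras via $P(-)$ understood, and with $F(f)=\id_\cpt\prot f$ on morphisms). As already noted in the paragraph preceding Proposition \ref{HLprop}, this is a genuine functor to the additive category $\HLcat$, compatible with the composition products; it is moreover additive, since $\cpt\prot(A_1\oplus A_2)\cong(\cpt\prot A_1)\oplus(\cpt\prot A_2)$ and local cyclic homology carries finite direct sums to direct sums. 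By Proposition \ref{HLprop} the associated functors $\Hom_{\HLcat}(F(-),F(D))$ and $\Hom_{\HLcat}(F(D),F(-))$ satisfy the properties $(E1)$, $(E2)$ and $(E3)$ for every $D\in\sCalg$. Hence the universal property of Theorem \ref{UP} provides a unique additive functor $F'\colon\kk(\cU)\functor\HLcat$ with $F=F'\circ\kk_0$.

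On morphism spaces this gives, in degree $0$, a natural homomorphism $\kk_0(A,B)=\Hom_{\kk(\cU)}(A,B)\map\Hom_{\HLcat}(\cpt\prot A,\cpt\prot B)=\HL_0(\cpt\prot A,\cpt\prot B)$ for every pair of separable $\sC$-algebras $A,B$. Multiplicativity, $\ch_\biv(x\cdot y)=\ch_\biv(x)\cdot\ch_\biv(y)$ for composable classes, is immediate because $F'$ is a functor and therefore respects the composition products on $\kk(\cU)$ and on $\HLcat$. Naturality in both variables is likewise built in, since $F'$ acts on all of $\kk(\cU)$ and the maps $A\map\cpt\prot A$ defining the stabilization are natural.

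To upgrade to the full $\ZZ/2$-graded statement I would transport the character along the suspension isomorphisms available on both sides: $\kk_1(A,B)\cong\kk_0(A,B(0,1))$ from the definition \eqref{kk}, and $\HL_1(\cpt\prot A,\cpt\prot B)\cong\HL_0(\cpt\prot A,\cpt\prot B(0,1))$ coming from the cone-suspension extension and $\HL$-excision exactly as used in the proof of Proposition \ref{HLprop}. Because $F'$ sends the universal cone-suspension extension of $\kk(\cU)$ to the corresponding one in $\HLcat$ and the connecting maps in the two six-term sequences of $(E3)$ are natural, $F'$ intertwines these isomorphisms; combined with the two-fold Bott periodicity built into both theories this propagates $\ch_\biv$ to a map $\kk_*(A,B)\map\HL_*(\cpt\prot A,\cpt\prot B)$ compatible with the periodicity operators. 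Finally, the identification $\kk_*(A,B)\cong\kk_*(\cpt\prot A,\cpt\prot B)$ displayed in the statement is precisely property $(E2)$ applied in each variable: the corner embeddings $A\map\cpt\prot A$ and $B\map\cpt\prot B$ become invertible in $\kk(\cU)$, and precomposing with these isomorphisms puts the character into the asserted form.

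The genuinely delicate point --- and the step I expect to be the main obstacle --- is not the degree-zero construction, which is a pure application of the universal property, but the bookkeeping required to see that the odd-degree part is well defined and that $\ch_\biv$ is independent of the auxiliary choices (the completely positive contractive linear splittings of the various semi-split extensions, and the local homotopy equivalences implicit in the $\HL$-excision isomorphisms). This is controlled by the uniqueness clauses already present in the framework: the uniqueness up to homotopy of classifying maps in axiom $\ke$ on the $\kk$-side, and the uniqueness of the functor $F'$ in Theorem \ref{UP}, which together force any two admissible choices to yield the same natural transformation.
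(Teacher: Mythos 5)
Your proposal is correct and follows exactly the route the paper takes: the paper's proof simply cites Theorem \ref{UP}, Remark \ref{sepKK} and Proposition \ref{HLprop} together with ``standard arguments following \cite{CunBivCh}'', and your write-up is precisely an unpacking of those standard arguments (applying the universal property to the functor $A\mapsto\cpt\prot A$ into $\HLcat$, reading off multiplicativity from functoriality of $F'$, extending to odd degree via suspension, and using $(E2)$ for the stabilization isomorphism). No discrepancy to report.
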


\begin{proof}
The assertion follows immediately from the Theorem \ref{UP}, Remark \ref{sepKK}, and the above Proposition \ref{HLprop} and standard arguments following \cite{CunBivCh}.
\end{proof}

\begin{rem}
The bivariant Chern--Connes type character $\ch^\biv_*$ constructed above obviously differs from the one described in Section \ref{HPCC} (also denoted by $\ch^\biv_*$), which was $\HP_*$-valued, on the category of $m$-algebras. Restricted to the category of separable $C^*$-algebras, our character $\ch^\biv_*$ agrees with Puschnigg's $\ch^P_*$ up to a natural isomorphism, that we encountered in subsection \ref{Pusch}. This follows from the unique characterization of the bivariant Chern--Connes character on the category of separable $C^*$-algebras (see Theorem \ref{PCC}) and the $C^*$-stability of bivariant $\HL_*$ on the category of separable $C^*$-algebras (see Theorem 5.14 (c) of ibid.), which provides the natural identification $\HL_*(\cpt\prot A,\cpt\prot B)\cong\HL_*(A,B)$. The following commutative diagram illustrates the situation:

\beqn
\xymatrix{
&& \HL_*(\cpt\prot A,\cpt\prot B)\ar[d]^\cong \\
\KK_*(A,B)\cong\kk_*(A,B) \ar[rru]^{\ch^\biv_*}\ar[rrd]_{\ch^P_*} && \HL_*(A,\cpt\prot B) \\
&& \HL_*(A,B)\ar[u]_\cong.
}
\eeqn
\end{rem}

\begin{thm} \label{dualCC}
Let $A\cong\ilim_n A_n$ be a separable and nuclear $\sC$-algebra. Then there is a natural dual Chern--Connes character homomorphism $\ch^*(A):=\ch^\biv_*(A,\CC):\K^*(A)\map\HL^*(\cpt\prot A)$, which factorizes as

\beq \label{dualfac}
\xymatrix{
\K^*(A)\ar[rr]^{\ch^*(A)}\ar[dr] && \HL^*(\cpt\prot A)\\
&\dlim_n \HL^*(A_n) \ar[ur].
}
\eeq
\end{thm}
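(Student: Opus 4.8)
The plan is to obtain the dual character by specializing the bivariant character of Theorem \ref{BivCC} and then producing the factorization from contravariant continuity of $\K$-homology together with a comparison of $\HL^*$ with the colimit of the $\HL^*(A_n)$. First I would set $B=\CC$ in Theorem \ref{BivCC} to get a homomorphism $\kk_*(A,\CC)\cong\kk_*(\cpt\prot A,\cpt\prot\CC)\map\HL_*(\cpt\prot A,\cpt\prot\CC)$; since $\cpt\prot\CC\cong\cpt$ and $\HL$ is $C^*$-stable on $C^*$-algebras (so $\HL_*(\cpt\prot A,\cpt)\cong\HL_*(\cpt\prot A,\CC)=\HL^*(\cpt\prot A)$, using the identification from the remark after Theorem \ref{BivCC}), this is exactly a map $\ch^*(A)\colon\K^*(A)=\kk_*(A,\CC)\map\HL^*(\cpt\prot A)$. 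Its naturality in $A$ is inherited from the naturality of $\ch_\biv$.

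Next I would construct the diagonal arrow $\K^*(A)\map\dlim_n\HL^*(A_n)$. By Corollary \ref{Nuc}(2), contravariant continuity gives $\K^*(A)\cong\dlim_n\K^*(A_n)$, and for each $n$ Puschnigg's dual Chern--Connes character (or equivalently the specialization of $\ch_\biv$ to the separable $C^*$-algebra $A_n$ together with the $C^*$-stability identification $\HL^*(\cpt\prot A_n)\cong\HL^*(A_n)$) provides $\ch^*(A_n)\colon\K^*(A_n)\map\HL^*(A_n)$. These are natural in $A_n$ with respect to the connecting $*$-homomorphisms $A_n\map A_{n-1}$ (here one must be slightly careful: the $\sC$-algebra is written with surjective connecting maps $A_n\to A_{n-1}$, so the induced maps on $\K^*$ and $\HL^*$ go in the direction making $\dlim_n$ the right construction), so they assemble into $\dlim_n\ch^*(A_n)\colon\dlim_n\K^*(A_n)\map\dlim_n\HL^*(A_n)$. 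Composing with the isomorphism $\K^*(A)\cong\dlim_n\K^*(A_n)$ yields the left-hand slanted arrow.

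For the right-hand slanted arrow $\dlim_n\HL^*(A_n)\map\HL^*(\cpt\prot A)$, I would use that the ind-Banach algebra $P(A)$ attached to $A\cong\ilim_n A_n$ receives compatible maps from each $P(A_n)$ (the connecting $*$-homomorphisms $A\to A_n$, which exist since $A$ is the inverse limit), inducing $\HL^*(A_n)=\HL_*(A_n,\CC)\map\HL_*(A,\CC)=\HL^*(A)$ compatibly in $n$, hence a canonical map from the colimit; composing with the $C^*$-stability isomorphism $\HL^*(A)\cong\HL^*(\cpt\prot A)$ gives the desired arrow. Commutativity of the triangle \eqref{dualfac} then follows because at each finite level $n$ the square relating $\ch^*(A)$, $\ch^*(A_n)$ and the canonical maps commutes by naturality of the bivariant character $\ch_\biv$ with respect to the $*$-homomorphism $A\map A_n$, and passing to the colimit over $n$ preserves this commutativity.

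The main obstacle I anticipate is the right-hand arrow and, more precisely, justifying that the canonical comparison $\dlim_n\HL^*(A_n)\map\HL^*(\ilim_n A_n)$ is well-defined and that the triangle genuinely commutes; unlike $\K$-homology, local cyclic (co)homology is \emph{not} known to be continuous under inverse limits of Banach algebras (the author explicitly flags understanding the behaviour of bivariant $\HL$ under inverse limits as an open problem), so I would only claim the existence of the comparison map and the factorization \emph{through} $\dlim_n\HL^*(A_n)$, not that the diagonal maps are isomorphisms. Concretely, the delicate point is to check that the functor $P(-)=\diss\circ\pct(-)$ sends the structure maps $A\to A_n$ to a compatible cocone under $\{P(A_n)\}$ in $\indBan$ in a way that is respected by $\cT_\an$ and $\X$, so that the induced maps on $\H^\loc_*$ assemble correctly; this is where one has to work with the explicit bornological description of $P(A)$ rather than with abstract continuity properties.
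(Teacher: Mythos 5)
Your proposal is correct and follows essentially the same route as the paper: specialize Theorem \ref{BivCC} at $B=\CC$, use $C^*$-stability of $\HL_*$ to identify the target with $\HL^*(\cpt\prot A)$, invoke Corollary \ref{Nuc} for $\K^*(A)\cong\dlim_n\K^*(A_n)$, and obtain the triangle from naturality of $\ch^*$ with respect to the projections $A\map A_n$ together with the canonical comparison map $\dlim_n\HL^*(A_n)\map\HL^*(\cpt\prot A)$. Your explicit caveat that this comparison map is only claimed to exist (not to be an isomorphism) matches the paper's intent exactly.
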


\begin{proof}
Putting $B=\CC$ in the above Theorem \ref{BivCC}, we get a natural homomorphism $\ch^*(A):\K^*(A)\map\HL_*(\cpt\prot A,\cpt)$. Since the corner embedding $A\map\cpt\prot A$ is an $\HL$-equivalence for any $C^*$-algebra $A$ (see Theorem 6.25 of \cite{MeyCycHom}), there is a natural isomorphism $\HL_*(\cpt\prot A,\cpt)\cong\HL_*(\cpt\prot A,\CC)=\HL^*(\cpt\prot A)$, giving rise to the natural homomorphism $\ch^*(A):\K^*(A)\map\HL^*(\cpt\prot A)$, which is the dual Chern--Connes character.

Using the projection homomorphisms $p_n:A\map A_n$, one obtains directed systems of abelian groups $\{\K^*(A_n),p_n^*\}$ and $\{\HL^*(\cpt\prot A_n),p_n^*\}$ with canonical homomorphisms $\dlim_n\K^*(A_n)\map \K^*(A)$ and $\dlim_n\HL^*(\cpt\prot A_n)\map\HL^*(\cpt\prot A)$. The naturality of the map $\ch^*$ produces the following commutative diagram

\beqn
\xymatrix{
\K^*(A)\ar[rr]^{\ch^*(A)} && \HL^*(\cpt\prot A)\\
\dlim_n\K^*(A_n)\ar[rr]^{\dlim_n \ch^*(A_n)}\ar[u]&&\dlim_n \HL^*(\cpt\prot A_n) \ar[u].
}
\eeqn Now the desired factorization is obtained by observing that $\dlim_n\HL^*(\cpt\prot A_n)\cong\dlim_n\HL^*(A_n)$ and that the left vertical arrow is an isomorphism, since $A$ is a nuclear and separable $\sC$-algebra (see Corollary \ref{Nuc}).

\end{proof}

\begin{rem}
If, in addition, $A$ is stable, i.e., $A\cong \cpt\prot A$, then obviously one can identify $\HL^*(\cpt\prot A)\cong\HL^*(A)$ in the above diagram \eqref{dualfac}. All the twisted continuous trace $\sC$-algebras $\CT(X,P)$, that we encountered before, are stable.
\end{rem}

\begin{prop} \label{lastprop}
Let $A\cong\ilim_n A_n$ be a separable and nuclear $\sC$-algebra, such that for all $n$ the dual Chern--Connes character $\K^*(A_n)\map\HL^*(\cpt\prot A_n)\cong\HL^*(A_n)$ is an isomorphism after tensoring with the complex numbers. Then the map induced by $\ch^*(A)$ (see Equation \eqref{dualfac}) $$\K^*(A)\cong\dlim_n \K^*(A_n)\map\dlim_n \HL^*(\cpt\prot A_n)\cong\dlim_n\HL^*(A_n)$$ is an isomorphism after tensoring with the complex numbers.
\end{prop}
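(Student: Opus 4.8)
The plan is to reduce the statement to the elementary facts that filtered colimits of abelian groups are exact and that $-\otimes_\ZZ\CC$ commutes with filtered colimits. First I would recall, from the proof of Theorem \ref{dualCC}, that the projections $p_n\colon A\map A_n$ (equivalently, the connecting $*$-homomorphisms of the system $\{A_n\}$) make $\{\ch^*(A_n)\colon\K^*(A_n)\map\HL^*(\cpt\prot A_n)\cong\HL^*(A_n)\}_n$ into a morphism of directed systems of abelian groups, and that the induced map on colimits is, under the identifications $\K^*(A)\cong\dlim_n\K^*(A_n)$ supplied by contravariant continuity (Corollary \ref{Nuc}) and $\dlim_n\HL^*(\cpt\prot A_n)\cong\dlim_n\HL^*(A_n)$, precisely the displayed map of the proposition, i.e. the bottom-then-right composite in the factorization diagram \eqref{dualfac}.

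Next I would tensor the whole system with $\CC$. Since $-\otimes_\ZZ\CC$ is additive, $\{\ch^*(A_n)\otimes\id_\CC\}_n$ is again a morphism of directed systems, now of $\CC$-vector spaces, and by hypothesis each of its components is an isomorphism. Because filtered colimits are exact on $\Ab$ (every objectwise invertible natural transformation becomes invertible after $\dlim$), we get an isomorphism $\dlim_n(\K^*(A_n)\otimes\CC)\overset{\cong}{\map}\dlim_n(\HL^*(A_n)\otimes\CC)$. Finally I would commute the tensor product past the colimit: as both $\dlim_n$ and $-\otimes_\ZZ\CC$ commute with filtered colimits (the tensor product being a left adjoint), there are natural isomorphisms $\dlim_n(\K^*(A_n)\otimes\CC)\cong(\dlim_n\K^*(A_n))\otimes\CC\cong\K^*(A)\otimes\CC$, and likewise $\dlim_n(\HL^*(A_n)\otimes\CC)\cong(\dlim_n\HL^*(A_n))\otimes\CC$. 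Stringing these identifications together with the displayed isomorphism yields the claim.

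Honestly there is no serious obstacle here; the statement is formal. The only points requiring a little care — and which are already settled by the results quoted above — are \textbf{(i)} that the identification $\HL^*(\cpt\prot A_n)\cong\HL^*(A_n)$ coming from $C^*$-stability of bivariant $\HL_*$ is natural in $n$, hence compatible with the directed-system structure and with $\ch^*$, so that the colimit $\dlim_n\HL^*(\cpt\prot A_n)\cong\dlim_n\HL^*(A_n)$ makes sense as stated; and \textbf{(ii)} that the colimit on the source is genuinely $\K^*(A)$, which is exactly contravariant continuity in Corollary \ref{Nuc} and is where the nuclearity and separability hypotheses on $A$ enter. With those in hand the proof amounts to assembling them.
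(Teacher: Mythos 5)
Your argument is correct and is essentially the paper's proof, which consists of the single observation that tensoring with $\CC$ commutes with inductive limits of abelian groups; you have simply spelled out the naturality and colimit-exactness details that the paper leaves implicit. Nothing further is needed.
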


\begin{proof}
One simply needs to observe that tensoring with the complex numbers commutes with inductive limits in abelian groups.
\end{proof}

\begin{rem}
For $\sC$-algebras of the form $\CT(X,P)=\ilim_n\CT(X_n,P_n)$, that we have seen before, the Chern--Connes character from $\K$-theory to local cyclic homology $$\K_*(\CT(X_n,P_n))\map\HL_*(\CT(X_n,P_n))$$ becomes an isomorphism after tensoring with the complex numbers for all $n$ (see Theorem \ref{HLcpt}). However, it does not automatically induce an isomorphism between their inverse limits, as tensoring with the complex numbers does not commute with inverse limits.
\end{rem}

\noindent
In the rest of the paper we discuss a rather general case, where the hypotheses of the above Proposition \ref{lastprop} are satisfied.

\begin{ex}
Recall that there is an associative cup-cap product in Kasparov's $\KK$-theory \cite{KasNovikov} given by

$$\KK_0(A_1,B_1\prot D)\otimes_D\KK_0(D\prot A_2,B_2)\map \KK_0(A_1\prot A_2, B_1\prot B_2),$$ which is functorial in each variable. Connes suggested a notion of Poincar{\'e} duality in noncommutative geometry using the above product \cite{ConBook}. If $A,B$ are two separable $C^*$-algebras, then $(A,B)$ is called a {\em Poincar{\'e} dual pair} (PD pair) if there are elements $\alpha\in\KK_0(A\prot B,\CC)$, $\beta\in\KK_0(\CC,A\prot B)$, such that $$\text{$\beta\otimes_A \alpha = 1_B\in\KK_0(B,B)$ and $\beta\otimes_B \alpha = 1_A\in\KK_0(A,A)$.}$$ They induces isomorphisms between $\K$-theory of $A$ and $\K$-homology of $B$ and vice versa as follows:

\beqn
\K_*(A)\cong\KK_*(\CC,A) &\map & \KK_*(B,\CC)\cong\K^*(B)\\
x &\mapsto & x\otimes_A \alpha,
\eeqn whose inverse is given by the map

\beqn
\K^*(B)\cong\KK_*(B,\CC) &\map & \KK_*(\CC,A)\cong\K_*(A)\\
y &\mapsto & \beta\otimes_B y.
\eeqn Thanks to the multiplicativity of the bivariant Chern--Connes character, the element $\ch^\biv_*(\alpha)$, $\ch^\biv_*(\beta)$ induce similar isomorphisms between the $\HL_*(A)$ and $\HL^*(B)$ and vice versa. If $(A,B)$ is a PD pair, then Puschnigg's bivariant Chern--Connes character induces a commutative diagram:

\beqn
\xymatrix{
\K^*(A)\ar[r]^\cong \ar[d]_{\ch^*} & \K_*(B)\ar[d]^{\ch_*}\\
\HL^*(A) \ar[r]^\cong & \HL_*(B).
}
\eeqn where the horizontal arrows are the Poincar{\'e} duality isomorphisms as described above. For some examples of physical PD pairs we refer the readers to \cite{MosPD,BMRS2}. Therefore, if $A\cong\ilim_n A_n$, $B\cong\ilim_n B_n$ are separable and nuclear $\sC$-algebras, such that each $(A_n,B_n)$ is a PD pair and $\ch_*(B_n):\K_*(B_n)\map\HL_*(B_n)$ is an isomorphism after tensoring with the complex numbers for all $n$, then the hypotheses of Proposition \ref{lastprop} are satisfied. We know that if each $B_n$ belongs to the UCT class, then $\ch_*(B_n)$ is an isomorphism after tensoring with the complex numbers (see Theorem 7.7 of \cite{MeyCycHom}).
\end{ex}

\begin{rem}
 The task of constructing a bivariant Chern--Connes type character on the category of pro $C^*$-algebras is a bit tricky. One can construct a bivariant $\K$-theory with the desired properties but the extension of bivariant local cyclic homology to pro $C^*$-algebras is somewhat problematic. Even if one could define it using the $\X$-complex formalism, it may not satisfy continuous homotopy invariance. One possibility is to localize the triangulated category $\Ho(\cC^\loc_\bullet)$ (see subsection \ref{locHom}) further along the evaluation maps $A[0,1]\map A$ for every pro $C^*$-algebra $A$ in order to enforce continuous homotopy invariance. 
\end{rem}


\bibliographystyle{abbrv}
\bibliography{/home/tubai/Professional/math/MasterBib/bibliography}

\vspace{5mm}
\noindent

\end{document}